\documentclass{amsart}
\usepackage{amssymb,
enumitem,
mathrsfs,
hyperref,
tikz,
upgreek,
subcaption,
graphics
}
\usetikzlibrary{decorations.pathreplacing}

\hypersetup{colorlinks=true}
\numberwithin{equation}{section}
\usepackage[T1]{fontenc}
\usepackage[utf8]{inputenc}
\usepackage{todonotes}

\usepackage{xargs}
\usepackage{xspace}

\usepackage[capitalise]{cleveref}
\usepackage{scalerel,stackengine}
\usepackage[all,cmtip]{xy} 

\DeclareFontFamily{U}{mathb}{\hyphenchar\font45}
\DeclareFontShape{U}{mathb}{m}{n}{
      <5> <6> <7> <8> <9> <10> gen * mathb
      <10.95> matha10 <12> <14.4> <17.28> <20.74> <24.88> mathb12
      }{}
\DeclareSymbolFont{mathb}{U}{mathb}{m}{n}


\DeclareMathSymbol{\sqsubsetneq}{3}{mathb}{"88}

\newcommand{\ball}[2]{\shade[ball color=black!30!white] (#1,#2,0) circle (.3cm)}


\newcommand{\RR}{\mathbb{R}}

\newcommand{\pre}[2]{#2^{#1}}
\newcommand{\On}{\mathrm{Ord}}

\newcommand{\Nbhd}{\boldsymbol{N}}


\newcommand{\pow}{\mathscr{P}}

\newcommand{\lh}{\operatorname{lh}}

\newcommand{\ZF}{{\sf ZF}}

\newcommand{\AC}{{\sf AC}}
\newcommand{\AD}{{\sf AD}}
\newcommand{\DC}{{\sf DC}}



\newcommandx{\set}[2][2 = undefined]{
  \ifthenelse{\equal{#2}{undefined}}{\{ #1 \}}{
    \{ #1 \mid #2 \}
  }
}

\DeclareMathOperator{\cbtype}{tp}

\newcommand{\countablecof}{\mathsf{Cof}_\omega}


\newcommand{\baire}{\mathcal{N}}
\newcommand{\cantor}{\mathcal{C}}
\newcommand{\nsdclass}{\boldsymbol{\Gamma}}
\newcommand{\NSD}{\operatorname{NSD}}

\newcommand{\nsdc}[1]{\mathrm{Class}_{\NSD}(#1)}
\newcommand{\nsds}[1]{\mathrm{Set}_{\NSD}(#1)}
\newcommand{\SD}{\operatorname{SD}}

\newcommand{\sds}[1]{\mathrm{Set}_{\SD}(#1)}

\newcommand{\sdc}[1]{\mathrm{Class}_{\SD}(#1)}
\newcommand{\wc}[1]{\mathrm{Class}_{\w}(#1)}

\newcommand{\kerp}[1]{\ker_{\cb}(#1)}

\newcommand{\cb}{\operatorname{CB}}
\newcommand{\rkcb}[1]{\rank{#1}_{\cb}}
\newcommand{\rkcbx}[2]{\rank{#1,#2}_{\cb}}

\newcommand{\derrkcb}[2]{D^{#1}_{\cb}(#2)}

\newcommand{\ptgl}{\operatorname{ptgl}}

\newcommand{\ptglsq}[1]{\ptgl(#1)}

\newcommand{\comp}{\operatorname{Comp}}

\newcommand{\wadge}[1]{\mathscr{W}_{#1}}

\newcommand{\conc}{{^{\smallfrown}}}

\newcommand{\w}{\mathsf{W}}
\newcommand{\lew}{\le_{\w}}
\newcommand{\slew}{<_{\w}}
\newcommand{\eqw}{\equiv_{\w}}
\newcommand{\wadgeclass}[1]{#1 {\downarrow}}
\newcommand{\wadgedegree}[1]{[#1]_\w}
\newcommand{\rank}[1]{||#1||}
\newcommand{\wrank}[1]{||#1||_\w}

\newcommand{\Diff}{D}
\newcommand{\differenceseq}[2]{\Diff_{#1}(#2)}
\newcommand{\difference}[2]{\Diff_{#1}(#2)}
\newcommand{\differenceopen}[1]{\Diff_{#1}(\boldsymbol{\Sigma}^0_1)}
\newcommand{\differenceopenX}[2]{\Diff_{#1}(\boldsymbol{\Sigma}^0_1(#2))}
\newcommand{\differencestaropenX}[2]{\Diff_{#1}^*(\boldsymbol{\Sigma}^0_1(#2))}
\newcommand{\differencesigmaalphaX}[3]{\Diff_{#1}(\boldsymbol{\Sigma}^0_{#2}(#3))}
\newcommand{\differenceopenXcheck}[2]{\widecheck{\Diff}_{#1}(\boldsymbol{\Sigma}^0_1(#2))}


\newcommand{\cof}[1]{\operatorname{cof}(#1)}

\newcommand{\clopensets}[1]{\boldsymbol{\Delta}^0_1(#1)}

\newcommand{\dercomp}[2]{D^{#1}_{\comp}(#2)}
\newcommand{\rkcomp}[1]{\rank{#1}_{\comp}}

\renewcommand{\ker}{\operatorname{ker}}

\newcommand{\inftycostbaire}[1]{(#1)^{\infty}}

\stackMath
\newcommand\widecheck[1]{%
\savestack{\tmpbox}{\stretchto{%
  \scaleto{%
    \scalerel*[\widthof{\ensuremath{#1}}-\widthof{\ensuremath{#1}}/4]{\kern-.3pt\rotatebox[origin=c]{180}{$\bigwedge$}\kern-.3pt}%
    {\rule[-\textheight/2]{1ex}{\textheight}}
  }{\textheight}%
}{0.5ex}}%
\stackon[1pt]{#1}{\tmpbox}%
}
\newcommand{\sdordinal}[1]{\On_{\mathrm{SD}}(#1)}
\newcommand{\nsdordinal}[1]{\On_{\mathrm{NSD}}(#1)}

\newenvironment{enumerate-(a)}{\begin{enumerate}[label={\upshape (\alph*)}, leftmargin=2pc]}{\end{enumerate}}
\newenvironment{enumerate-(a)-r}{\begin{enumerate}[label={\upshape (\alph*)}, leftmargin=2pc,resume]}{\end{enumerate}}
\newenvironment{enumerate-(a)-5}{\begin{enumerate}[label={\upshape (\alph*)}, leftmargin=2pc,start=5]}{\end{enumerate}}
\newenvironment{enumerate-(A)}{\begin{enumerate}[label={\upshape (\Alph*)}, leftmargin=2pc]}{\end{enumerate}}
\newenvironment{enumerate-(A)-r}{\begin{enumerate}[label={\upshape (\Alph*)}, leftmargin=2pc,resume]}{\end{enumerate}}
\newenvironment{enumerate-(i)}{\begin{enumerate}[label={\upshape (\roman*)}, leftmargin=2pc]}{\end{enumerate}}
\newenvironment{enumerate-(i)-r}{\begin{enumerate}[label={\upshape (\roman*)}, leftmargin=2pc,resume]}{\end{enumerate}}
\newenvironment{enumerate-(I)}{\begin{enumerate}[label={\upshape (\Roman*)}, leftmargin=2pc]}{\end{enumerate}}
\newenvironment{enumerate-(I)-r}{\begin{enumerate}[label={\upshape (\Roman*)}, leftmargin=2pc,resume]}{\end{enumerate}}
\newenvironment{enumerate-(1)}{\begin{enumerate}[label={\upshape (\arabic*)}, leftmargin=2pc]}{\end{enumerate}}
\newenvironment{enumerate-(1)-r}{\begin{enumerate}[label={\upshape (\arabic*)}, leftmargin=2pc,resume]}{\end{enumerate}}
\newenvironment{itemizenew}{\begin{itemize}[leftmargin=2pc]}{\end{itemize}}
\newenvironment{enumerate-a-(1)}{\begin{enumerate}[label={\upshape a.\arabic*)}, leftmargin=2pc]}{\end{enumerate}}
\newenvironment{enumerate-b-(1)}{\begin{enumerate}[label={\upshape b.\arabic*)}, leftmargin=2pc]}{\end{enumerate}}

\newtheorem*{theorem*}{Theorem}
\newtheorem*{lemma*}{Lemma}
\newtheorem*{proposition*}{Proposition}
\newtheorem{theorem}{Theorem}[section]
\newtheorem{lemma}[theorem]{Lemma}
\newtheorem{corollary}[theorem]{Corollary}
\newtheorem{proposition}[theorem]{Proposition}
\newtheorem{question}[theorem]{Question}

\newtheorem{maintheorem}{Main Theorem}

\theoremstyle{definition}
\newtheorem{defin}[theorem]{Definition}
\newtheorem{example}[theorem]{Example}

\newtheorem{fact}[theorem]{Fact}
\theoremstyle{remark}
\newtheorem{remark}[theorem]{Remark}

\begin{document}

\title[Wadge hierarchy on Polish zero-dimensional spaces]{A classification of the Wadge hierarchies\\
on zero-dimensional Polish spaces}
\date{\today}
\author[R.~Carroy]{Raphaël Carroy}
\address{Dipartimento di matematica \guillemotleft{Giuseppe Peano}\guillemotright, Universit\`a di Torino, Via Carlo Alberto 10, 10121 Torino --- Italy}
\email{raphael.carroy@unito.it}

\author[L.~Motto Ros]{Luca Motto Ros}
\address{Dipartimento di matematica \guillemotleft{Giuseppe Peano}\guillemotright, Universit\`a di Torino, Via Carlo Alberto 10, 10121 Torino --- Italy}
\email{luca.mottoros@unito.it}

\author[S.~Scamperti]{Salvatore Scamperti}
\address{Dipartimento di matematica \guillemotleft{Giuseppe Peano}\guillemotright, Universit\`a di Torino, Via Carlo Alberto 10, 10121 Torino --- Italy}
\email{salvatore.scamperti@unito.it}
\subjclass[2020]{03E15, 54E50}
\keywords{Wadge reducibility, continuous reducibility, Wadge quasi-order, zero-dimensional Polish spaces}
\thanks{This work was supported by the project PRIN 2017 ``Mathematical Logic: models, sets, computability'' (prot.\ 2017NWTM8R) and by  the ``National Group for Algebraic and Geometric Structures, and their Applications'' (GNSAGA -- INDAM)}
\thanks{The authors are grateful to the anonymous referee for their useful suggestions and references}

\begin{abstract}
We provide a complete classification, up to order-isomorphism, of all possible Wadge hierarchies on zero-dimensional Polish spaces using (essentially) countable ordinals as complete invariants. We also observe that although our assignment of invariants is very simple  and there are only \( \aleph_1 \)-many equivalence classes, the above classification problem is quite complex from the descriptive set-theoretic point of view: in particular, there is no Borel procedure to determine whether two zero-dimensional Polish spaces have isomorphic Wadge hierarchies. All results are based on a complete and explicit description of  the Wadge hierarchy on an arbitrary zero-dimensional Polish space, depending on its topological properties.
\end{abstract}

\maketitle

\section{Introduction}

Work in \( \ZF + \DC(\RR) \).%
\footnote{\( \DC(\RR) \) is the axiom of Dependent Choices over \( \RR \), which is equivalent to having \( \DC(X) \) for all Polish spaces \( X \).}
Given topological spaces $X$ and $Y$, a \emph{continuous reduction} from $A\subseteq X$ to $B\subseteq Y$ is a continuous function $f \colon X\to Y$ satisfying $f^{-1}(B) = A$. When such an \( f \) exists 
we say that $A\subseteq X$ \emph{continuously reduces} or \emph{Wadge reduces} to $B\subseteq Y$, and we write $A\lew^{X,Y}B$, or $A\lew^{X}B$ if $X=Y$.
Notice that \( A \lew^{X,Y} B \) if and only if \( {X \setminus A} \lew^{X,Y} {X \setminus B} \).
We also denote by \( \slew^{X ,Y}\) the strict part of \( \lew^{X,Y} \), that is: \( A \slew^{X,Y} B \iff { {A \lew^{X,Y} B} \wedge {B \not\lew^{Y,X} A}}\). Similarly, we write \( A \eqw^{X,Y} B \iff {{A \lew^{X,Y} B} \wedge {B \lew^{Y,X} A}} \).
To simplify the notation, we again write \( \slew^X \) and \( \eqw^X \) instead of \( \slew^{X,X} \) and \( \eqw^{X,X} \), respectively.

Continuous reducibility is a transitive and reflexive relation, that is a \emph{quasi-order}, and \( \eqw^X \) is the equivalence relation canonically induced by \( \lew^X \). We call \emph{Wadge degree} of $A\subseteq X$ the $\eqw^{X}$-equivalence class of $A$, and we denote it by $\wadgedegree{A}^X$.
The Wadge reducibility \( \lew^X \) induces a partial order on Wadge degrees of $X$ that we call \emph{Wadge hierarchy} on $X$ and denote by $\wadge{X}$. When \( \lew^X \) is well-founded, we can associate to 
each set \( A \subseteq X \) (or to its Wadge degree 
\( \wadgedegree{A}^{X} \)) its rank according to 
\( \lew^{X} \), which is called the \emph{Wadge rank} of \( A \) (or of \( \wadgedegree{A}^X \)) and is denoted 
by \( \wrank{A}^{X} \). For technical and historical reasons,  
we use \( 1 \)  as the minimal value for 
\( \wrank{A}^{X} \). We also let \( \Theta_X \) be the length of \( \wadge{X} \), i.e.\ \( \Theta_X = \sup \{ \wrank{A}^X +1 \mid A \subseteq X \} \).

To simplify the notation, given a set \( A \subseteq X \) we sometimes write \( \neg A \) instead of \( X \setminus A \) when the ambient space \( X \) is clear from the context.
Wadge left his name to continuous reducibility when he proved in \cite{wadgephd}, by game theoretic methods, that $\lew^{\baire}$ on Borel subsets of the Baire space $\baire = {}^\omega \omega$ is well-founded and satisfies
the \emph{Semi-Linear Ordering principle}, or \( \mathsf{SLO}^{\w}_{\baire} (\mathrm{Bor}) \):
for all (Borel) sets $A,B \subseteq \baire$ 
\begin{equation} \label{eq:SLOW}
A \lew^{\baire} B \quad \text{or} \quad \neg B \lew^{\baire} A.
\end{equation}
These two results generalize under the \emph{Axiom of Determinacy} ($\AD$), yielding to the fact that the Wadge hierarchy is \emph{semi-well-ordered}, namely: the  full \( \mathsf{SLO}^{\w}_{\baire} \) holds (i.e.\ \eqref{eq:SLOW} is true for arbitrary subsets  \( A,B \subseteq \baire \)), and \( \lew^{\baire} \) is well-founded. (The former directly follows from Wadge's original lemma, while the latter is due to a highly nontrivial argument by Martin and Monk.) 

Early investigations on Wadge theory \cite{wadgephd,vanwesepphd,steelphd,solovay1978} have fully described
the Wadge hierarchy on \( \baire\) under
 $\AD$ (see Figure~\ref{fig:baire}). Indeed, by \( \mathsf{SLO}^{\w}_{\baire} \)  one easily gets that 
for each \( A \subseteq \baire \), either \( A \lew^{\baire} \neg A \), in 
which case we say that \( A \) (or its Wadge degree \( \wadgedegree{A}^{\baire} \)) 
is \emph{selfdual}, or else \( A \) and \( \neg A \) are \( \lew^{\baire} \)-incomparable, in which 
case \( A \) is called \emph{nonselfdual} and 
\( \{ \wadgedegree{A}^{\baire}, \wadgedegree{\neg A}^{\baire} \} \) is a 
maximal antichain, called a \emph{nonselfdual pair}. The minimal Wadge degrees 
are those in the nonselfdual pair 
\( \{ \wadgedegree{\baire}^{\baire}, \wadgedegree{\emptyset}^{\baire} \} \), and indeed \( \wadgedegree{\baire}^{\baire} = \{ \baire \} \) and 
\( \wadgedegree{\emptyset}^{\baire} = \{ \emptyset \} \). After this minimal nonselfdual pair there is 
a single selfdual degree consisting of all nontrivial clopen subsets of \( \baire \). Then nonselfdual pairs 
and selfdual degrees alternate, at limit levels of countable cofinality there is always a 
selfdual degree, while at limit levels of uncountable cofinality there is always a 
nonselfdual pair. Finally, \( \Theta_{\baire} = \Theta \) where
\[ 
\Theta = \sup \{ \alpha \in \On \mid \RR \text{ surjects onto } \alpha \}.
 \]

\begin{figure} 
\begin{tikzpicture}[scale=0.65]
\ball{0}{1};
\ball{0}{-1};
\ball{1}{0};
\ball{2}{1};
\ball{2}{-1};
\ball{3}{0};
\ball{4}{1};
\ball{4}{-1};
\node at (5.5,0) {\( \dotsc \)};
\ball{7}{0};
\ball{8}{1};
\ball{8}{-1};
\ball{9}{0};
\node at (10.5,0) {\( \dotsc \)};
\ball{12}{1};
\ball{12}{-1};
\ball{13}{0};
\node at (14.5,0) {\( \dotsc \)};
\node at (7,-2.5) {\( \mathrm{cof} = \omega \)};
\draw [->] (7,-2)--(7, -0.5);
\node at (12,-2.5) {\( \mathrm{cof} > \omega \)};
\draw [->] (12,-2)--(12, -1.5);
\draw [decorate,decoration={brace,mirror,amplitude=10pt}]
    (-0.5,-3) -- (15.5,-3) node [midway,yshift=-0.25in] {\( \Theta_{\baire} = \Theta \)};
\end{tikzpicture}
\caption{The Wadge hierarchy \( \wadge{\baire} \) on the Baire space \( \baire \).}
\label{fig:baire}
\end{figure}

Wadge theory has received a lot of attention, as witnessed by \cite{van_wesep,louveau,louveau_san_raymond,duparc1, andretta_martin,andrettaSLO, mottoros1,MotBeyond,MotBaire, kiharamontalban, carroy2020every}, among many other works.
Several generalizations have been considered, including variations of the reducibility (usually replacing continuous functions with other natural classes of functions) and/or replacing \( \baire \) with other spaces. In the second direction, the first natural move is to consider other zero-dimensional Polish spaces.%
\footnote{Recall that a space is \emph{Polish} if it is separable and completely metrizable, and \emph{zero-dimensional} if it has a basis of clopen sets.}
For example, if one assumes \( \AD\) again and considers the Cantor space \( \cantor = {}^\omega 2 \), then the corresponding Wadge hierarchy \( \wadge{\cantor} \) looks exactly like \( \wadge{\baire} \) except for the fact that at limit levels one always has a nonselfdual pair, independently of the cofinality of the level (see Figure~\ref{fig:cantor}). This is obtained via   straightforward modifications of the results and game-theoretic methods used in the case of \( \baire \).%
\footnote{Although this was probably known in the field since the Eighties, to the best of our knowledge the first complete accounts on the structure \( \wadge{\cantor} \) appeared only more recently in~\cite[Section 2.7]{andrettaSLO} and~\cite[Section 4]{AndCamCantorhierarchy}.}

\begin{figure} 
\begin{tikzpicture}[scale=0.65]
\ball{0}{1};
\ball{0}{-1};
\ball{1}{0};
\ball{2}{1};
\ball{2}{-1};
\ball{3}{0};
\ball{4}{1};
\ball{4}{-1};
\node at (5.5,0) {\( \dotsc \)};
\ball{7}{-1};
\ball{7}{1};
\ball{8}{0};
\ball{9}{1};
\ball{9}{-1};
\node at (10.5,0) {\( \dotsc \)};
\ball{12}{1};
\ball{12}{-1};
\ball{13}{0};
\node at (14.5,0) {\( \dotsc \)};
\node at (7,-2.5) {\( \mathrm{cof} = \omega \)};
\draw [->] (7,-2)--(7, -1.5);
\node at (12,-2.5) {\( \mathrm{cof} > \omega \)};
\draw [->] (12,-2)--(12, -1.5);
\draw [decorate,decoration={brace,mirror,amplitude=10pt}]
    (-0.5,-3) -- (15.5,-3) node [midway,yshift=-0.25in] {\( \Theta_{\cantor} = \Theta \)};
\end{tikzpicture}
\caption{The Wadge hierarchy \( \wadge{\cantor} \) on the Cantor space \( \cantor \).}
\label{fig:cantor}
\end{figure}

If one moves to an arbitrary zero-dimensional Polish space \( X \) (equivalently: to an arbitrary closed subspace of \( \baire \)) different from \( \baire \) and \( \cantor \), instead, very little is known about how \( \wadge{X} \) looks like. The hierarchy starts again with the nonselfdual pair \( \{ \wadgedegree{X}^X, \wadgedegree{\emptyset}^X \} \), where \( \wadgedegree{X}^X = \{ X \} \) and \( \wadgedegree{\emptyset}^X = \{ \emptyset \} \), followed by the selfdual degree of all nontrivial clopen subsets of \( X \). 
As discussed in Section~\ref{subsec:retraction},
the existence of a retraction from \( \baire \) onto \( X \) yields the fact that, still under \( \AD  \), the Wadge hierarchy \( \wadge{X} \) is semi-well-ordered, i.e.\ the quasi-order \( \lew^X \) is well-founded and \( \mathsf{SLO}^\w_X \) is true, i.e.\ \( {A \lew^X B} \vee {\neg B \lew^X A}\) holds for all \( A,B \subseteq X \) (see also~\cite{mottorosquasiPolish}).
This implies that at each level of \( \wadge{X} \) we find either a nonselfdual pair \( \{ \wadgedegree{A}^X, \wadgedegree{\neg A}^X \} \) (when \( A \not\lew^X \neg A \)) or a single selfdual degree \( \wadgedegree{A}^X \) (when \( A \lew^X \neg A \)). 
Another easy observation is that \( \Theta_X = \Theta_{\baire} = \Theta \) if \( X \) is uncountable,
while \( \Theta_X < \omega_1 \) if \( X \) is countable. This follows from the retraction trick again (see Section~\ref{subsec:retraction}) if \( X \) is uncountable, because in this case \( X \) contains a closed homeomorphic copy of \( \cantor \); if instead \( X \) is countable, we use the fact that every subset of \( X \) is a \( \boldsymbol{\Delta}^0_2 \)-set, and indeed it is in \( \differenceopen{\alpha} \), where \( 1 \leq \alpha < \omega_1 \) depends on the Cantor-Bendixson rank \( \rkcb{X} \) of \( X \).
However, nothing else is known, and a complete and detailed description of \( \wadge{X} \)  has never been carried out in the literature: 
the main purpose of this article is to fill this gap and prove the following result. (Notice that in the countable case zero-dimensionality is automatic and could thus be removed from the hypotheses, and moreover in such case we do not need any determinacy assumptions because \( \pow(X) \subseteq \boldsymbol{\Delta}^0_2(X) \) and thus Martin's Borel determinacy is enough.)

\begin{maintheorem} \label{thm:maintheorem}
Let \( X \) be an arbitrary zero-dimensional Polish space, and assume \( \AD \)  if \( X \) is uncountable. Then 
\begin{enumerate-(1)}
\item \label{thm:maintheorem-1}
\( \wadge{X} \) is semi-well-ordered (\cref{prop:semiwellordered});
\item \label{thm:maintheorem-2}
selfdual degrees and nonselfdual pairs alternate, starting with the nonselfdual pair \( \{ \wadgedegree{X}^X, \wadgedegree{\emptyset}^X \} \) at the bottom, followed by the selfdual degree of all nontrivial clopen sets (\cref{after_nsd_is_sd}, \cref{alternating_duality_theorem_intro}, and \cref{prop:minimaldegree});
\item \label{thm:maintheorem-4}
for limit levels, we distinguish two cases:
\begin{enumerate-(a)}
\item \label{thm:maintheorem-4a}
if the perfect kernel of \( X \) is not compact (hence nonempty), then at all limit levels of uncountable cofinality there is a nonselfdual pair and  at all limit levels of countable cofinality there is a selfdual degree (\cref{uncountablecof_ordinal_intro} and \cref{wadgeZ_eq_wadgeBaire_onedir});
\item \label{thm:maintheorem-4b}
otherwise there is \( \alpha_X < \omega_1 \) such that for all limit \( \alpha < \Theta_X \), there is a selfdual degree at level \( \alpha \) if and only if \( \alpha < \alpha_X \);  in particular, \( \alpha_X = 0  \) if \( X \) is compact (\cref{countablecof_ordinal_intro} and the ensuing discussion);
\end{enumerate-(a)}
\item \label{thm:maintheorem-5}
\( \Theta_X = \Theta \) if \( X \) is uncountable, while if \( X \) is countable then $\Theta_X = 2 \cdot \rkcb{X} + \varepsilon_X$ for some $\varepsilon_X \in \{-1,0,1\}$ (\cref{prop:decompositionofslefdual} and \cref{length_countable_hierarchy_intro}).
\end{enumerate-(1)}
\end{maintheorem}

The parameter \( \alpha_X \) depends on a new rank on \( X \) which we call \emph{compact rank} and denote by \( \rkcomp{X}  \) (Section~\ref{subsec:compactrank}). 
As for item~\ref{thm:maintheorem-5}, the ordinal \( \rkcb{X} \) is the usual Cantor-Bendixson rank of \( X \), while the parameter \( \varepsilon_X \) depends on \( \rkcb{X} \), \( \alpha_X \), and whether \( X \) is simple  or not (see Section~\ref{subsec:CB-rank} for the relevant definitions). 

It follows from \cref{thm:maintheorem} that if \( X \) has a non-compact perfect kernel, then \( \wadge{X} \) is isomorphic to \( \wadge{\baire} \) (Figure~\ref{fig:baire}); if \( X \) is compact and, more generally, if \( \rkcomp{X} < \omega \), then \( \wadge{X} \) is isomorphic to \( \wadge{\cantor} \) (Figure~\ref{fig:cantor}); in the remaining cases, i.e.\ when \( \wadge{X} \) is isomorphic neither to \( \wadge{\baire}\) nor to \( \wadge{\cantor} \), the hierarchy \( \wadge{X} \) is a mixture of the two, with an initial segment (of countable length) which is Baire-like and the rest of the hierarchy which is Cantor-like (see Figure~\ref{fig:mixed}). 

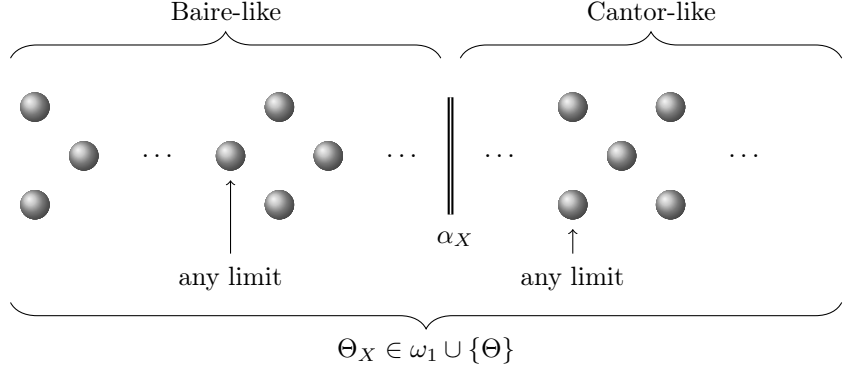
\begin{figure} 
\begin{tikzpicture}[scale=0.65]
\ball{0}{1};
\ball{0}{-1};
\ball{1}{0};
\node at (2.5,0) {\( \dotsc \)};
\ball{4}{0};
\ball{5}{1};
\ball{5}{-1};
\ball{6}{0};
\node at (7.5,0) {\( \dotsc \)};
\draw [thick,double] (8.5,1.2)--(8.5,-1.2);
\node at (9.5,0) {\( \dotsc \)};
\ball{11}{1};
\ball{11}{-1};
\ball{12}{0};
\ball{13}{1};
\ball{13}{-1};
\node at (14.5,0) {\( \dotsc \)};
\node at (4,-2.5) {any limit};
\draw [->] (4,-2)--(4, -0.5);
\node at (11,-2.5) {any limit};
\draw [->] (11,-2)--(11, -1.5);
\node at (8.6,-1.7) {\( \alpha_X \)};
\draw [decorate,decoration={brace,mirror,amplitude=10pt}]
    (-0.5,-3) -- (16.5,-3) node [midway,yshift=-0.25in] {\( \Theta_{X} \in \omega_1 \cup \{ \Theta \} \)};
    \draw [decorate,decoration={brace,amplitude=10pt}]
    (-0.5,2) -- (8.3,2) node [midway,yshift=0.25in] {Baire-like};
        \draw [decorate,decoration={brace,amplitude=10pt}]
    (8.7,2) -- (16.5,2) node [midway,yshift=0.25in] {Cantor-like};
\end{tikzpicture}
\caption{The Wadge hierarchy \( \wadge{X} \) on an arbitrary zero-dimensional Polish space \( X \) when \( \wadge{X} \) is isomorphic neither to \( \wadge{\baire} \) nor to \( \wadge{\cantor} \). In this case, the ordinal \( \alpha_X \) satisfies \( \omega < \alpha_X < \omega_1 \).}
\label{fig:mixed}
\end{figure}

\cref{thm:maintheorem}
 is optimal, in the sense that for all possibilities which are coherent with the limitations contained therein, there is a zero-dimensional Polish space \( X \) whose Wadge hierarchy \( \wadge{X} \) realizes precisely that scenario (Section~\ref{section_sharpness}). Thus what we have obtained is a complete classification (up to order-isomorphism) of all possible Wadge hierarchies for zero-dimensional Polish spaces, where the complete invariants are given by pairs of ordinals \( (\alpha_X, \Theta_X ) \) which can easily be coded as countable ordinals (see the discussion at the beginning of Section~\ref{sec:mainthm2}).

Although our classification of the possible Wadge hierarchies over zero-dimensional Polish spaces \( X \) is complete and quite satisfactory, we also show that it is not possible to determine \( \wadge{X} \) in a Borel manner. For example, by \cref{thm:complexityofeqclasses} the collection of all zero-dimensional Polish spaces whose Wadge hierarchy is order-isomorphic to \( \wadge{\baire} \) is a complete analytic subset of the Effros Borel space \( F(\baire) \). More generally, for all nontrivial zero-dimensional Polish spaces \( X \)
there is no Borel procedure to determine which zero-dimensional Polish spaces \( Y \) give rise to a Wadge hierarchy \( \wadge{Y} \) order-isomorphic to \( \wadge{X} \).

\begin{maintheorem} \label{thm:maintheorem2}
Let \( X \) be a zero-dimensional Polish space with at least two points, and assume \( \AD \) if \( X \) is uncountable.
Then the set of zero-dimensional Polish spaces \(Y\) such that \(\wadge{Y}\) is order-isomorphic to \(\wadge{X}\) is not Borel in the Effros Borel space \( F(\baire)\).
\end{maintheorem}

Indeed, it is not even possible to decide in a Borel way whether at a given countable limit level we will find a nonselfdual pair or a selfdual degree, or to determine the value of \( \alpha_X \) and/or \( \Theta_X \) (see \cref{rmk:complexityofeqclasses}). The reason behind this wild behaviour actually relies on the possible configurations of isolated points. If we restrict to perfect spaces, then the situation is radically neater and we have only two (Borel) possibilities: spaces with a Baire-like Wadge hierarchy, and spaces with a Cantor-like Wadge hierarchy (\cref{prop:complexityofeqclasses}).

Finally, let us briefly discuss the optimality of the various hypotheses in our results.
As it is well-known in the area, the determinacy assumption \( \AD \) can be relaxed to get level-by-level statements,
at the cost of making such statements and the various definitions a lot more cumbersome.
For a detailed discussion on this matter, and specifically on nice pointclasses, see \cite[Section 3]{articolo_raphael}. In particular, if we restrict the attention to the Borel subsets of \( X \), then Martin's Borel determinacy suffices and we do not need to assume \( \AD \) anywhere.

Moreover, we point out that there is a reason for restricting to zero-dimensional Polish spaces. The Wadge quasi-order has been studied in various other contexts, even beyond the realm of Polish spaces
(see for instance \cite{Selivanov,mottorosquasiPolish,pequignot,ikegamischlicht,camerlo,camerlomassaza}),
but in most cases it is not semi-well-ordered if the space is not zero-dimensional (with a few exceptions: see~\cite[Theorem 33]{camerlomassaza}).
If one restricts the attention to Polish spaces, this becomes even a characterization because  \( \wadge{X} \) has infinite antichains whenever \( X \) is a metric (not necessarily complete or separable) space with positive dimension (see~\cite[Theorem 1.5]{schlicht_nozerodim}). Moreover, 
\cite{hertling} and~\cite{duparc2} show among other things that \(\wadge{X} \) is ill-founded when \( X \) is $\RR$ or the Scott domain, respectively. See~\cite{mottorosquasiPolish} for more possible behaviours and a rough classification of Wadge-like hierarchies on arbitrary (quasi-)Polish spaces.

\section{Preliminaries and known facts}

The monograph \cite{kechris} is our reference for all classical results and notation in descriptive set theory.
From this point onward, unless otherwise stated, we assume \( \AD \) and that \( X \), \( Y \), \( Z \), \dots are nonempty zero-dimensional Polish spaces. Recall however
that the determinacy assumption is just for ease of exposition, and that no further assumption is required if all results and definitions are restricted to the realm of Borel sets.

\subsection{Some easy and well-known facts}

The principle \( \mathsf{SLO}^\w_X \) can be reformulated to deal with sets in possibly different zero-dimensional Polish spaces (\cite[Theorem 21.14]{kechris}). The proof is identical to the original one by Wadge, except that the rules of the so-called Wadge's game are modified so that the players stay inside their respective closed subspaces of \( \baire \).

\begin{proposition}[\( \AD \)] \label{prop:generalWadgelemma}
Let \( X,Y \) be zero-dimensional Polish spaces. Then for all $A\subseteq X$ and $B\subseteq Y$ 
\[
A\lew^{X,Y}B \quad \vee \quad \neg B\lew^{Y,X}  A.
\]
\end{proposition}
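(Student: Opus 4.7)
The plan is to adapt Wadge's original game-theoretic argument, modifying the rules so that the two players are forced to stay inside their respective spaces. First, I would fix closed embeddings of $X$ and $Y$ into $\baire$ (available since zero-dimensional Polish spaces are exactly the closed subspaces of $\baire$ up to homeomorphism), and let $T_X, T_Y \subseteq \omega^{<\omega}$ be the corresponding pruned trees, so that $X = [T_X]$ and $Y = [T_Y]$ and neither tree has terminal nodes. This last point is the first small thing to check and is what ensures legal moves always exist.

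Next, I would define the generalized Wadge game $G(A,B)$: in each round Player I plays some $x(n) \in \omega$ subject to $x \restriction (n+1) \in T_X$, and Player II either passes or plays some $y(k) \in \omega$ subject to the analogous condition in $T_Y$; Player II must play non-pass moves infinitely often (otherwise Player II loses by default), so that the resulting $y$ lies in $[T_Y] = Y$; and Player II wins the resulting run iff $x \in A \iff y \in B$. Since this is a game on $\omega$ (with the rules and payoff coded in the natural way), $\AD$ gives determinacy.

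I would then extract the two possible conclusions. If $\tau$ is a winning strategy for II, define $f \colon X \to Y$ by letting $f(x)$ be the element of $Y$ produced by II following $\tau$ against I playing out $x$; each coordinate $f(x)(k)$ depends on only finitely many coordinates of $x$, so $f$ is continuous, and the winning condition gives $f^{-1}(B) = A$, i.e.\ $A \lew^{X,Y} B$. If $\sigma$ is a winning strategy for I, define $g \colon Y \to X$ by letting $g(y)$ be the element of $X$ produced by I following $\sigma$ against II playing out $y$ (with no passes); again $g$ is continuous, and the winning condition for I (i.e.\ the failure of the winning condition for II) becomes $y \in B \iff g(y) \notin A$, so $g^{-1}(A) = \neg B$, yielding $\neg B \lew^{Y,X} A$.

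The main obstacle is purely bookkeeping: one has to verify carefully that the trees $T_X, T_Y$ can be arranged to have no dead ends so that no player gets stuck; that the convention on passing moves does not disrupt continuity of the induced functions; and that the ``pass infinitely often'' requirement on II is a rule that fits within the determinacy hypothesis. None of these is deep — the core idea is exactly that of Wadge's original proof — but they are the places where the argument genuinely differs from the case $X = Y = \baire$ and so deserve explicit attention.
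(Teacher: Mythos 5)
Your proposal is correct and matches the paper's approach exactly: the paper gives no detailed proof but states that the argument is Wadge's original game-theoretic one with the rules modified so that each player stays inside their respective closed subspace of \( \baire \), which is precisely what you carry out. The bookkeeping points you flag (pruned trees without dead ends, the passing convention, continuity of the induced maps) are indeed the only places where the argument differs from the classical case, and you handle them correctly.
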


Notice that when \( X = Y \), \cref{prop:generalWadgelemma} is simply the assertion that \( \mathsf{SLO}^\w_X \) holds under \( \AD \) for an arbitrary zero-dimensional Polish space \( X \).
Proposition~\ref{prop:generalWadgelemma} is so crucial in the theory that we will simply refer to it as \emph{Wadge's Lemma}.

Let \( \sds{X} \) be the collection of all \emph{selfdual sets} of \( X \), i.e.\ 
\[ 
\sds{X} = \{ A \subseteq X \mid A \lew^X \neg A \},
 \] 
and, dually, let \( \nsds{X} \) be the collection of \emph{nonselfdual sets} of \( X \), that is
\[ 
\nsds{X} = \pow(X) \setminus \sds{X} = \{ A \subseteq X \mid A \not\lew^X \neg A \}.
 \] 
Then Wadge's Lemma implies that at each level of \( \wadge{X} \) we find either a \( 2 \)-sized maximal antichain \( \{ \wadgedegree{A}^X, \wadgedegree{\neg A}^X \} \) for some \( A \in \nsds{A} \), called a \emph{nonselfdual pair}, or a single selfdual degree \( \wadgedegree{A}^X \) for some \( A \in \sds{X} \). 

Another easy observation (see \cite[Lemma 1]{camerlo}) is that for every nonempty topological space \( X \) the pair \( \{ \wadgedegree{X}^X, \wadgedegree{\emptyset}^X \} \) is nonselfdual and its elements are minimal in \( \wadge{X} \); if \( X \) is not connected then right after such pair there is a single selfdual degree. (The proof of the next proposition is left to the reader.)

\begin{proposition} \label{prop:minimaldegree}
Let \( X \) be an arbitrary nonempty topological space. Then \( \wadgedegree{X}^X = \{ X \} \) and \( \wadgedegree{\emptyset}^X = \{ \emptyset \}\). Moreover, the pair \( \{ \wadgedegree{X}^X, \wadgedegree{\emptyset}^X \} \) is nonselfdual and \( \lew^X \)-minimal in \( \wadge{X} \), that is, for every  \( \emptyset, X \neq A \subseteq X\) we have \( \emptyset, X \slew^X A \).

If moreover \( X \) is disconnected, then immediately above \( \{ \wadgedegree{X}^X, \wadgedegree{\emptyset}^X \} \) there is a single selfdual degree consisting of all nontrivial clopen subsets of \( X \), and all other Wadge degrees are strictly \( \lew^X \)-above it.
\end{proposition}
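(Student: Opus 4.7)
The plan is to exploit two elementary observations about preimages under self-maps $f \colon X \to X$, namely $f^{-1}(X) = X$ and $f^{-1}(\emptyset) = \emptyset$, together with the fact that constant functions are always continuous. These suffice for the entire proposition, and no determinacy or topological assumption beyond nonemptiness (resp.\ disconnectedness) is needed.

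For the first assertion, I would observe that $A \lew^X X$ requires a continuous $f$ with $f^{-1}(X) = A$, forcing $A = X$; symmetrically $A \lew^X \emptyset$ forces $A = \emptyset$. This gives $\wadgedegree{X}^X = \{X\}$ and $\wadgedegree{\emptyset}^X = \{\emptyset\}$, and also shows $X \not\lew^X \emptyset$ and $\emptyset \not\lew^X X$ (using $X \neq \emptyset$), so the pair is nonselfdual. For minimality, given any $A \subseteq X$ with $\emptyset \neq A \neq X$, the constant function at any point of $A$ witnesses $X \lew^X A$, and the constant function at any point of $X \setminus A$ witnesses $\emptyset \lew^X A$; by the initial observation, neither reduction can be reversed unless $A \in \{\emptyset, X\}$. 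Hence $\emptyset, X \slew^X A$ as required.

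For the disconnected case, fix a nontrivial clopen $C \subseteq X$. Given any other nontrivial clopen $C' \subseteq X$, pick $x_0 \in C'$ and $x_1 \in X \setminus C'$ and define $f \colon X \to X$ by $f(x) = x_0$ for $x \in C$ and $f(x) = x_1$ otherwise. This $f$ is continuous precisely because $C$ is clopen, and by construction $f^{-1}(C') = C$, so $C \lew^X C'$. Symmetry gives $C \eqw^X C'$, and since $X \setminus C$ is itself a nontrivial clopen set, $C \eqw^X X \setminus C$, so this common degree is selfdual.

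Finally, to place this clopen degree immediately above the minimal pair, I would run the same two-point construction one more time: for any $A \subseteq X$ with $\emptyset \neq A \neq X$, choosing $x_0 \in A$ and $x_1 \in X \setminus A$ and defining $f$ as above yields $f^{-1}(A) = C$, hence $C \lew^X A$. Conversely, if $A \lew^X C$ via some continuous $g$, then $A = g^{-1}(C)$ is clopen, so $A$ is either in $\{\emptyset, X\}$ or a nontrivial clopen. Consequently, every $A$ outside the minimal pair and the clopen degree satisfies $C \slew^X A$, completing the proof. There is no real obstacle here; the entire argument is a bookkeeping exercise built from constant and two-valued continuous functions.
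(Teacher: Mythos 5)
Your proof is correct and is exactly the elementary argument the paper intends: the paper explicitly leaves this proposition to the reader (citing Camerlo's Lemma 1 for the analogous observation), and the standard proof is precisely your bookkeeping with $f^{-1}(X)=X$, $f^{-1}(\emptyset)=\emptyset$, constant maps, and two-valued maps that are continuous because their fibers are clopen. All steps check out, including the two directions needed to show the clopen degree sits immediately above the minimal pair and consists of exactly the nontrivial clopen sets.
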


\subsection{The retraction method} \label{subsec:retraction}

Part~\ref{thm:maintheorem-1} of \cref{thm:maintheorem} can be proved using a trick due to Marcone (unpublished, but see~\cite[Proposition 28]{andrettaSLO}) 
and, independently, to Selivanov~\cite[Corollary 2.4]{Selivanov}.  
If \( X \subseteq Y \) are such that there is a  retraction%
\footnote{A retraction from \( Y \) onto \( X \) is a (surjective) continuous function \( \rho \colon Y \to X \) such that \( \rho \restriction X \) is the identity on \( X \).}
\( \rho \) from \( Y \) onto \( X \), then 
 the map from \( \pow(X) \) to \( \pow(Y) \) defined by \( A \mapsto \rho^{-1}(A) \) embeds the structure \( \langle \pow(X), {\lew^X}, {\neg} \rangle \) into \( \langle \pow(Y), {\lew^{Y}}, {\neg} \rangle \) 
 (see also \cite[Proposition 5.4]{mottorosquasiPolish} for a more general result). 
 Notice that up to Wadge equivalence, the choice of \( \rho \) is irrelevant: if \( \rho_1, \rho_2 \colon Y \to X \) are two retractions then \( \rho_1^{-1}(A) \eqw^Y \rho_2^{-1}(A) \), as witnessed by the maps \( \rho_1 \) and \( \rho_2 \) themselves. Notice also that for any retraction \( \rho \colon Y \to X \) and \( A \subseteq X \), we have \( A \eqw^{X,Y} \rho^{-1}(A) \) via the inclusion map \( \iota \colon X \to Y \) and the retraction \( \rho \colon Y \to X \).

It is a classical result that every zero-dimensional Polish space can be construed as a closed subspace of \( \baire \), thus there is a retraction of \( \baire \) onto \( X \).
Therefore we get the desired result (\cite[Theorem 5.12(1)]{mottorosquasiPolish}).

\begin{proposition}[\( \AD \)] \label{prop:semiwellordered}
Let \( X \) be a zero-dimensional Polish space. Then \( \mathsf{SLO}^\w_X \) holds and \( \lew^X \) is well-founded, i.e.\ \( \wadge{X} \) is semi-well-ordered.
\end{proposition}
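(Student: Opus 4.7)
The plan is to follow exactly the retraction method sketched in Section~\ref{subsec:retraction} and reduce both assertions to the corresponding (known) facts about \( \wadge{\baire} \). Because \( X \) is a zero-dimensional Polish space, it is homeomorphic to a closed subspace of \( \baire \), so without loss of generality assume \( X \subseteq \baire \) is closed. A standard construction (e.g.\ using the tree of extendable nodes of \( X \), mapping each \( y \in \baire \) to the unique \( x \in X \) determined by the leftmost path through the pruned tree agreeing with \( y \) up to the point where \( y \) leaves it) produces a retraction \( \rho \colon \baire \to X \). Then the key observation recalled in the excerpt is that \( A \mapsto \rho^{-1}(A) \) embeds \( \langle \pow(X), \lew^X, \neg \rangle \) into \( \langle \pow(\baire), \lew^\baire, \neg \rangle \), and in fact \( A \eqw^{X,\baire} \rho^{-1}(A) \) for every \( A \subseteq X \), as witnessed by the inclusion \( \iota \colon X \hookrightarrow \baire \) and \( \rho \) itself.

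With this in hand, both conclusions are immediate transfers. For \( \mathsf{SLO}^\w_X \), given \( A, B \subseteq X \), apply the known \( \mathsf{SLO}^\w_\baire \) (or, equivalently, \cref{prop:generalWadgelemma} with \( X = Y = \baire \)) to \( \rho^{-1}(A) \) and \( \rho^{-1}(B) \): either \( \rho^{-1}(A) \lew^\baire \rho^{-1}(B) \), or \( \neg \rho^{-1}(B) = \rho^{-1}(\neg B) \lew^\baire \rho^{-1}(A) \). Composing the witnessing continuous function with \( \iota \) and \( \rho \) in the appropriate order, together with the equivalences \( A \eqw^{X,\baire} \rho^{-1}(A) \) and \( B \eqw^{X,\baire} \rho^{-1}(B) \), yields \( A \lew^X B \) or \( \neg B \lew^X A \), as required. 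For well-foundedness, any strictly descending chain \( A_0 >_\w^X A_1 >_\w^X \dotsb \) in \( \pow(X) \) would be transported by \( \rho^{-1} \) to a strictly descending chain \( \rho^{-1}(A_0) >_\w^\baire \rho^{-1}(A_1) >_\w^\baire \dotsb \) in \( \pow(\baire) \), contradicting the well-foundedness of \( \lew^\baire \) (Martin--Monk, under \( \AD \)).

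There is no real obstacle here beyond citing the existence of the retraction and the classical semi-well-orderedness of \( \wadge{\baire} \) under \( \AD \); the only point requiring a small check is that \( \rho^{-1} \) genuinely preserves strict inequalities, which is immediate from the fact that it preserves both \( \lew \) and \( \neg \) (so in particular it preserves the equivalence \( \eqw \), whence also its strict part). In the Borel case no determinacy is needed, since Wadge's original argument together with Martin's Borel determinacy suffices to establish both \( \mathsf{SLO}^\w_\baire \) and well-foundedness on Borel sets, and the retraction argument relativizes without change.
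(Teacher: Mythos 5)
Your proposal is correct and follows exactly the paper's own argument: the paper proves \cref{prop:semiwellordered} precisely by the retraction method of Section~\ref{subsec:retraction}, realizing \( X \) as a closed subspace of \( \baire \), taking a retraction \( \rho \colon \baire \to X \), and transferring \( \mathsf{SLO}^\w_\baire \) and the Martin--Monk well-foundedness along the embedding \( A \mapsto \rho^{-1}(A) \) of \( \langle \pow(X), \lew^X, \neg \rangle \) into \( \langle \pow(\baire), \lew^\baire, \neg \rangle \). The only difference is that you spell out the routine verifications (composition with \( \iota \) and \( \rho \), preservation of strict inequalities) that the paper leaves to the cited references.
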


\cref{prop:semiwellordered} allows us to split ordinals \( 1 \leq \alpha < \Theta_X \) in two groups: those for which at level \( \alpha \) of \( \wadge{X} \) we find a single selfdual degree and those for which we instead find a nonselfdual pair. This is condensed in the following definition and notation.

\begin{defin} \label{def:sdordinal}
An ordinal $1 \leq \alpha < \Theta_X$ is called \emph{selfdual} for $X$ if the subsets of \( X \) with Wadge rank $\alpha$ are selfdual,
	otherwise we say that $\alpha$ is \emph{nonselfdual}. We denote by $ \sdordinal{X}$ the set of all \( 1 \leq \alpha < \Theta_X \) which are selfdual for $X$,
	while $ \nsdordinal{X}$ stands for the set of nonselfdual \( 1 \leq \alpha < \Theta_X \). \end{defin}

Notice that by \cref{prop:semiwellordered}, determining the structure of \( \wadge{X} \) amounts to determine the two sets of ordinals \( \sdordinal{X} \) and \( \nsdordinal{X} \), together with the exact value of \( \Theta_X \).

The same trick based on retractions can also be used to prove the first half of part~\ref{thm:maintheorem-5} of \cref{thm:maintheorem}, i.e.\ to compute
\( \Theta_X \) when \( X \) is uncountable. Indeed, if a zero-dimensional Polish space \( X \) is uncountable, then \( \cantor \) is (homeomorphic to) a closed subset of \( X \): therefore there is a retraction of \( X \) onto \( \cantor \), and thus \( \langle \pow(\cantor), {\lew^\cantor}, {\neg} \rangle \) embeds into \( \langle \pow(X), {\lew^X}, {\neg} \rangle \). Combined with the fact that, as observed, \( \langle \pow(X), {\lew^X}, {\neg} \rangle \) embeds into \( \langle \pow(\baire), {\lew^\baire}, {\neg} \rangle \), this shows that \( \Theta_\cantor \leq \Theta_X \leq \Theta_\baire \). 
Since under \( \AD \) it holds  \( \Theta_\cantor = \Theta_\baire =  \Theta \), we get:  

\begin{proposition}[\( \AD \)] \label{prop:decompositionofslefdual}
Let \( X \) be an uncountable zero-dimensional Polish space. Then \( \wadge{X} \) has length \( \Theta_X = \Theta \).
\end{proposition}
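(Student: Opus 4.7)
The plan is to sandwich \( \Theta_X \) between \( \Theta_{\cantor} \) and \( \Theta_{\baire} \) via two applications of the retraction method discussed in \S2.2, and then invoke the \( \AD \)-identity \( \Theta_{\cantor} = \Theta_{\baire} = \Theta \) recalled in the introduction.

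For the upper bound, I would use the standard fact that every zero-dimensional Polish space embeds as a closed subspace of \( \baire \). Fixing such an embedding and a retraction \( \rho \colon \baire \to X \), the map \( A \mapsto \rho^{-1}(A) \) embeds \( \langle \pow(X), {\lew^X}, {\neg} \rangle \) into \( \langle \pow(\baire), {\lew^\baire}, {\neg} \rangle \) (as recalled right before the statement), hence strictly preserves Wadge-ranks and gives \( \Theta_X \leq \Theta_\baire \).

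For the lower bound, the key input is the classical result that every uncountable Polish space contains a homeomorphic copy of \( \cantor \) as a closed subspace (a consequence of the perfect set theorem). Since \( X \) is additionally zero-dimensional, this closed copy of \( \cantor \) is a retract of \( X \): e.g., realize both \( \cantor \) and \( X \) as closed subsets of \( \baire \) and pull back a retraction \( \baire \to \cantor \) through a retraction \( \baire \to X \); or, more directly, choose a clopen partition of \( X \) refining a cover by small neighbourhoods of points of \( \cantor \) and collapse each piece onto such a point. Either way we get a retraction \( \rho' \colon X \to \cantor \), and the induced map \( A \mapsto (\rho')^{-1}(A) \) embeds \( \langle \pow(\cantor), {\lew^\cantor}, {\neg} \rangle \) into \( \langle \pow(X), {\lew^X}, {\neg} \rangle \). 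This yields \( \Theta_\cantor \leq \Theta_X \).

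Combining the two inequalities with \( \Theta_\cantor = \Theta_\baire = \Theta \) gives \( \Theta_X = \Theta \), as required. The only mildly nontrivial step is the existence of the retraction \( X \to \cantor \), but given the zero-dimensionality of \( X \) this is essentially routine, so I do not expect any real obstacle; the whole argument is really just a two-line application of the machinery already set up in \S2.2.
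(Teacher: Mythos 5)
Your proposal is correct and is essentially identical to the paper's argument: the paper also sandwiches \( \Theta_X \) between \( \Theta_\cantor \) and \( \Theta_\baire \) by retracting \( \baire \) onto \( X \) (as a closed subspace) and \( X \) onto a closed copy of \( \cantor \), and then uses \( \Theta_\cantor = \Theta_\baire = \Theta \) under \( \AD \). No gaps.
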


Finally, combining the retraction method with a well-known result in Wadge theory due to Wadge himself~\cite[Lemma 3]{AndrettaMoreOnWadge},  we obtain a technical fact that provides a useful characterization of selfdual sets.

\begin{proposition}[\( \AD \)] \label{prop:selfdualanalysis}
Let \( X \) be a zero-dimensional Polish space. Then \( A \subseteq X \) is selfdual if and only if there are a clopen partition \(  (V_n)_{n \in \omega } \) of \( X \) and sets \( (A_n)_{n \in \omega} \) such that \( A_n \slew^X A \) and \( A = \bigcup_{n \in \omega} (A_n \cap V_n) \).

Moreover, we can further assume that each \( A_n \) is nonselfdual, and \( \wrank{A}^X = \sup_{n \in \omega} (\wrank{A_n}^X+1) \).
\end{proposition}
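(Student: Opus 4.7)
The plan is to prove the two directions separately, leveraging the semi-well-orderedness of \( \wadge{X} \) (\cref{prop:semiwellordered}) and the cross-space Wadge's Lemma (\cref{prop:generalWadgelemma}) throughout. For the backward direction, suppose \( A = \bigcup_{n} (A_n \cap V_n) \) with \( (V_n) \) a clopen partition of \( X \) and each \( A_n \slew^X A \). Wadge's Lemma applied to \( A_n \) and \( \neg A \) yields \( A_n \lew^X \neg A \): the alternative \( \neg \neg A \lew^X A_n \) would read \( A \lew^X A_n \) and contradict \( A_n \slew^X A \). Fixing continuous reductions \( f_n \colon X \to X \) witnessing \( A_n \lew^X \neg A \) and defining \( f(x) := f_n(x) \) for \( x \in V_n \) produces a continuous function (since each \( V_n \) is clopen) satisfying \( f^{-1}(\neg A) = \bigcup_n (V_n \cap A_n) = A \), so \( A \) is selfdual.

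For the forward direction, I would realize \( X \) as a closed subspace of \( \baire \) and fix a retraction \( \rho \colon \baire \to X \), as in Section~\ref{subsec:retraction}. Since \( A \eqw^{X,\baire} \rho^{-1}(A) \), the preimage \( A' := \rho^{-1}(A) \) is selfdual in \( \baire \), so Wadge's classical selfdual analysis lemma \cite[Lemma 3]{AndrettaMoreOnWadge} produces a clopen partition \( (U_n) \) of \( \baire \) and nonselfdual sets \( B_n \slew^\baire A' \) with \( A' = \bigcup_n (B_n \cap U_n) \) and \( \wrank{A'}^\baire = \sup_n (\wrank{B_n}^\baire + 1) \). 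Setting \( V_n := U_n \cap X \) and \( A_n := B_n \cap X \) gives a clopen partition of \( X \), and using \( \rho \ristr X = \id_X \) one checks \( A = X \cap A' = \bigcup_n (A_n \cap V_n) \). The strict inequalities \( A_n \slew^X A \) transfer via the inclusion \( \iota \colon X \to \baire \) and the retraction \( \rho \): the composition \( \rho \circ f_n \circ \iota \) reduces \( A_n \) to \( A \) for any \( f_n \) witnessing \( B_n \lew^\baire A' \), and conversely any hypothetical reduction \( h \colon X \to X \) of \( A \) to \( A_n \) would make \( \iota \circ h \circ \rho \) reduce \( A' \) to \( B_n \), contradicting \( B_n \slew^\baire A' \).

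The rank formula is then a purely internal argument in \( X \): set \( \gamma := \sup_n (\wrank{A_n}^X + 1) \), so that \( \gamma \leq \wrank{A}^X \) is immediate from \( A_n \slew^X A \). If \( \gamma < \wrank{A}^X \), pick any \( D \subseteq X \) with \( \gamma \leq \wrank{D}^X < \wrank{A}^X \). Wadge's Lemma applied to each \( A_n \) and \( D \) forces \( A_n \lew^X D \), since the alternative \( \neg D \lew^X A_n \) is ruled out by \( \wrank{A_n}^X < \gamma \leq \wrank{D}^X = \wrank{\neg D}^X \); glueing reductions of \( A_n \) to \( D \) along \( (V_n) \) exactly as in the backward direction yields \( A \lew^X D \), contradicting \( D \slew^X A \). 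Hence \( \gamma = \wrank{A}^X \).

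Finally, for the nonselfduality of the \( A_n \)'s I would argue by induction on \( \wrank{A}^X \), available by \cref{prop:semiwellordered}: if some \( A_n \) obtained above is selfdual in \( X \), the inductive hypothesis applied to \( A_n \) yields a finer decomposition \( A_n = \bigcup_m (A_{n,m} \cap V_{n,m}) \) with each \( A_{n,m} \) nonselfdual and \( A_{n,m} \slew^X A_n \). Replacing in the original formula the pair \( (A_n, V_n) \) by the family \( (A_{n,m}, V_n \cap V_{n,m})_m \) and reindexing the resulting double sequence produces a decomposition of \( A \) with all pieces nonselfdual, strictly below \( A \), while the rank equality is preserved since \( \wrank{A_{n,m}}^X + 1 \leq \wrank{A_n}^X \) and the supremum over \( n \) is achieved. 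The main obstacle is precisely this last bookkeeping step: the naive transfer via \( \rho \) need not preserve nonselfduality, because \( B_n \cap X \) can collapse in \( X \) even when \( B_n \) is nonselfdual in \( \baire \), and the iterative refinement above is what repairs this.
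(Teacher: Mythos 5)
Your proof is correct and takes essentially the same route as the paper: pull \( A \) back to a selfdual \( A' = \rho^{-1}(A) \subseteq \baire \) via a retraction, invoke the classical Baire-space decomposition of selfdual sets, transfer the pieces to \( X \), handle nonselfduality by iterating on the pieces that collapse (the paper phrases this as a finitely-terminating iteration justified by well-foundedness rather than as an induction on Wadge rank, and builds the partition explicitly from the tree \( \boldsymbol{T}(A') \) of~\cite[Lemma 22]{AndrettaEquivalenceWadgeLipschitz}, but the substance is identical), and obtain the rank formula by glueing reductions along the clopen partition. The one imprecision is your parenthetical claim that the rank equality survives the refinement because ``the supremum over \( n \) is achieved''---it need not be---but this is harmless, since your own rank argument applies verbatim to \emph{any} decomposition of \( A \) into pieces strictly \( \lew^X \)-below it, in particular to the refined one.
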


\begin{proof}
The backward direction is obvious, as by \( \mathsf{SLO}^\w_X\)  we have \( A_n \lew^X \neg A \) for all \( n \in \omega \): if \( (f_n)_{n \in \omega} \) are continuous maps witnessing this, then \( f = \bigcup_{n \in \omega} (f_n \restriction V_n) \) is a continuous map witnessing \( A \lew^X \neg A \).

Conversely, let \( X \) be a closed subspace of \( \baire \) and \( \rho \colon \baire \to X \) be a retraction. If \( A \) is selfdual in \( X \), then \( A' = \rho^{-1}(A) \) is selfdual in \( \baire \). Then the set of those \( s \in \pre{<\omega}{\omega} \) such that \( A' \lew^{\baire,\Nbhd_s} A' \cap \Nbhd_s \) is a nonempty well-founded tree \( \boldsymbol{T}(A') \subseteq \pre{<\omega}{\omega} \) (\cite[Lemma 22]{AndrettaEquivalenceWadgeLipschitz}). Let \( (V_n)_{n \in \omega} \) be an enumeration of the sets of the form \( \Nbhd_{s {}^\smallfrown{} k} \cap X \) with \( k \in \omega \) and \( s \) a leaf in \( \boldsymbol{T}(A') \), and set \( A_n = X \) if \( V_n \subseteq A \) and \( A_n =  A \cap V_n \) otherwise.
It is easy to check that the \( V_n \)'s and \( A_n \)'s are as required. 

The additional part on nonselfduality of the \( A_n \)'s follows by iterating the construction on those \( A_n \) which happen to be selfdual in \( X \). (This process must terminate  on each branch after finitely many steps because otherwise we would construct a strictly \( \lew^X \)-decreasing sequence of subsets of \( X \), against~\cref{prop:semiwellordered}.)

Finally, set \( \alpha = \wrank{A}^X \) and \( \alpha_n = \wrank{A_n}^X \), so that in particular \( \alpha_n < \alpha \).
If \( B \subseteq X \) is a set with \( \wrank{B}^X = \sup_{n \in \omega} (\alpha_n+1) \), then \( A_n \slew^X B \) because \( \wrank{A_n}^X = \alpha_n < \alpha_n+1 \leq  \wrank{B}^X \): if \( f_n \) witnesses \( A_n \lew^X B \), then \( A \lew^X B \) via \( \bigcup_{n \in \omega} (f_n \restriction V_n) \), thus \( \sup_{n \in \omega} (\alpha_n+1) = \wrank{B}^X \geq \wrank{A}^X = \alpha \). The other inequality is obvious.
\end{proof}

\begin{remark} \label{rmk:sefldualanalysis}
\begin{enumerate-(1)}
\item \label{rmk:sefldualanalysis-1}
The above proof shows that we can indeed assume that either \( A_n = X \) (if \( V_n \subseteq A \)), or else \( A_n = A \cap V_n \) (and thus \( A_n \subseteq V_n \)); indeed, if \(A \) is not clopen and we drop the requirement that \(A_n \) be nonselfdual, then the latter can be assumed to be true for all \( n \in \omega \).
\item \label{rmk:sefldualanalysis-3}
Proposition~\ref{prop:selfdualanalysis} can be relativized to any clopen subset of \( X \), namely:
If $A\subseteq X$ is selfdual and $U\in\clopensets{X}$, then there exist a clopen partition $(U_n)_{n \in \omega}$ of \( U \) and nonselfdual sets $A_n\slew^X A$
such that $\bigcup_{n \in \omega} (A_n\cap U_n)=A\cap U$. Indeed, this is trivial if \( U \cap A \slew^X A \). If instead \( U \cap A \equiv_\w^X A \), then \( U \cap A \) is selfdual and we can apply \cref{prop:decompositionofslefdual}, setting then \( U_n = V_n \cap U \). This easily provides alternative proofs of~\cite[Theorem 5.4 and Corollary 5.5]{articolo_raphael}.
\end{enumerate-(1)}
\end{remark}

\subsection{Pointclasses}

Wadge pointclasses provide an alternative (but equivalent) way to present and study  the Wadge hierarchy on a space \( X \). 
A \emph{boldface pointclass} \( \nsdclass \) in $X$ is a subset of $\pow(X)$ which is closed under continuous preimages, i.e.\ it is downward closed under \( \lew^X \).
The \emph{dual} \( \widecheck{\nsdclass} \) of \( \nsdclass \) is defined by \( \widecheck{\nsdclass}=\{\neg A \mid A\in\nsdclass \} \). The dual operator is clearly idempotent, i.e.
 the dual of \( \widecheck{\nsdclass} \) is \( \nsdclass \) itself. We say that \( \nsdclass \) is \emph{nonselfdual} if \( \nsdclass \neq \widecheck{\nsdclass} \), and \emph{selfdual} otherwise.

 A boldface pointclass \( \nsdclass \) is a \emph{Wadge class} if it is of the form $\wadgeclass{A}_X=\{ B\subseteq X \mid B\lew^X A \}$ for some \( A \subseteq X \); any \( A \in \nsdclass \) satisfying \( \nsdclass = \wadgeclass{A}_X \) is called \emph{complete} (for \( \nsdclass \)), and we say that \( \nsdclass \) is \emph{generated} by \( A \). We denote by \( \wc{X} \) the collection of all Wadge classes in \( X \).
 Notice that by \( \mathsf{SLO}^\w_X \), every nonselfdual boldface pointclass \( \nsdclass \) of \( X \) is automatically a Wadge class; 
 in contrast, not all selfdual boldface pointclasses in \( X \) are Wadge classes.
 We denote by \( \nsdc{X} \) the collection of all nonselfdual Wadge classes in \( X \), and by \( \sdc{X} = \wc{X} \setminus \nsdc{X} \) the collection of the selfdual ones. 
 
The Wadge hierarchy \( \wadge{X} \)  is clearly isomorphic to the structure of all  Wadge classes ordered by inclusion, as witnessed by the map \( \wadgedegree{A}^X \mapsto \wadgeclass{A}_X \). In particular, \( A \in \sds{X} \)  if and only if \( \wadgeclass{A}_X \in \sdc{X} \), and a nonselfdual pair \( \{ \wadgedegree{A}^X, \wadgedegree{\neg A}^X \} \) corresponds to the pair of distinct nonselfdual Wadge classes \( (\nsdclass, \widecheck{\nsdclass} ) \) with \( \nsdclass = \wadgeclass{A}_X \).
Every $\nsdclass \in \wc{X}$ induces a \emph{coarse Wadge class} $\nsdclass^*= \nsdclass \cup\widecheck{\nsdclass}$.
When \( \mathsf{SLO}^\w_X \) holds and \( \lew^X \) is well-founded, 
inclusion on coarse Wadge classes is a well-order: the ordinal \( \alpha \) corresponding to the position of \( \nsdclass^* \) in such a well-order (once we start counting by \( 1 \)) coincides with \( \wrank{A}^X \) for some/any \( A \subseteq X \) such that \( \nsdclass = \wadgeclass{A}_X \).
Moreover, when \( X \subseteq Y \) are zero-dimensional Polish spaces with \( X \) closed in \( Y \),
the retraction method (Section~\ref{subsec:retraction}) induces an injection from \( \wc{X} \) into \( \wc{Y} \). Indeed, if \( \rho \colon Y \to X \) is a retraction we can associate to each \( \nsdclass = \wadgeclass{A}_X \in \wc{X} \) the  Wadge class \( \rho^{-1}(\nsdclass) =  \wadgeclass{(\rho^{-1}(A))}_Y \in \wc{Y} \). This is again independent on the chosen retraction, i.e.\ if \( \rho_1, \rho_2 \colon Y \to X \) are both retractions, then \( \rho_1^{-1}(\nsdclass) = \rho_2^{-1}(\nsdclass) \) because \( \rho_1^{-1}(A) \eqw^Y \rho_2^{-1}(A) \). Clearly, \( \nsdclass \in \nsdc{X} \iff \rho^{-1}(\nsdclass) \in \nsdc{Y} \).

Notice that any boldface pointclass \( \nsdclass \neq \{ \emptyset \} \) completely determines the space \( X \) where it ``lives'', as \( X \) can be canonically recovered as the \( \subseteq \)-largest set in \( \nsdclass \). This allows us to greatly simplify the notation: the symbol \( \w \) is unnecessary%
\footnote{The link to continuous reducibility is somewhat implicit in the fact that we only consider \emph{boldface} pointclasses, which by definition are \( \lew^X \)-downward closed.}
because the order relation for pointclasses is simply inclusion; and as observed we can (almost) always drop the reference to the ambient space \( X \). So for example when \( \nsdclass \neq \{ \emptyset \} \) is a Wadge class we can simply write \( \rank{\nsdclass} \) to denote the Wadge rank of \( \nsdclass \), i.e.\ the ordinal \( \wrank{A}^X \) for some/any \( A \subseteq X \) such that \( \nsdclass = \wadgeclass{A}_X \).

The chosen notation highlights the tight connection between ordinals, sets, and Wadge classes in a zero-dimensional Polish space \( X \): if \( \nsdclass \) is a Wadge class generated by \( A \subseteq X \),  then
\[ 
\nsdclass \in \nsdc{X} \iff A \in \nsds{X} \iff \rank{\nsdclass} = \wrank{A}^X \in \nsdordinal{X} .
 \]

\subsection{The relativization method}

Building on previous work  by Louveau and Saint-Raymond, 
the paper \cite{articolo_raphael} develops a substantial part of Wadge theory for arbitrary zero-dimensional Polish spaces, often in terms of Wadge classes. Here we recall the few facts that we need for the present work.

The relativization method, introduced in~\cite{louveau_san_raymond}, provides a way to ``transfer'' boldface pointclasses in \( \baire \) to the space \( X \)   (see \cite[Section 6]{articolo_raphael}). 

\begin{defin} \label{def:relativization}
Let $X$ be a zero-dimensional Polish space,  and consider any boldface pointclass $\nsdclass\subseteq\pow(\baire)$. The \emph{relativization} of \( \nsdclass \) to \( X \) is the boldface pointclass
\[
\nsdclass(X)=\{A \subseteq X \mid g^{-1}(A)\in\nsdclass \text{ for all continuous } g \colon \baire \to X \}.
\]
\end{defin}

Notice that the dual (in \( X \)) of \( \nsdclass(X) \) is simply \( \widecheck{\nsdclass}(X) \), i.e.\ the relativization of the dual (in \( \baire \)) of \( \nsdclass \). Thus if \( \nsdclass(X) \in \nsdc{X} \) then \( \nsdclass \in \nsdc{\baire} \). In contrast, there are situations where \( \nsdclass \in \nsdc{\baire} \) but \( \nsdclass(X) \in \sdc{X} \), see the end of this subsection for more details.

The relativization method works particularly well with nonselfdual  Wadge classes.
The following result sums up the content of \cite[(Proof of) Lemma 6.5 and Theorem 7.2]{articolo_raphael}.

\begin{theorem}[\( \AD \)]\label{relativization} 
Let $X$  be a zero-dimensional Polish space.
\begin{enumerate-(1)}
\item  \label{relativization-2}
For all $\mathbf{\Lambda}\in\nsdc{X}$ there is a unique boldface pointclass $\nsdclass \subseteq \pow(\baire)$ such that $\mathbf{\Lambda}=\nsdclass(X)$, and moreover \( \nsdclass\in\nsdc{\baire} \).
\item  \label{relativization-3}
If moreover $X$ is uncountable, then
$\nsdc{X}=\set{\nsdclass(X)}[\nsdclass\in\nsdc{\baire}]$, and indeed \( \langle \nsdc{\baire}, {\subseteq} \rangle \) and \( \langle \nsdc{X}, {\subseteq} \rangle \) are isomorphic via the map \( \nsdclass \mapsto \nsdclass(X) \). 
\end{enumerate-(1)}
\end{theorem}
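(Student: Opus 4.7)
My plan is to use the retraction method (Section~\ref{subsec:retraction}) to assign to each $\mathbf{\Lambda}\in\nsdc{X}$ a canonical pull-back to $\baire$, and then use $\mathsf{SLO}^\w_\baire$ to rule out competing candidates. Throughout, fix a closed embedding $\iota\colon X\hookrightarrow\baire$ with retraction $\rho\colon\baire\to X$, so $\rho\circ\iota=\id_X$.

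For the existence half of part~\ref{relativization-2}, given $\mathbf{\Lambda}=\wadgeclass{A}_X$ with $A\in\nsds{X}$, the natural candidate is $\nsdclass:=\wadgeclass{\rho^{-1}(A)}_\baire$. Since $A\eqw^{X,\baire}\rho^{-1}(A)$ is witnessed by $\iota$ and $\rho$ themselves, nonselfduality transfers and $\nsdclass\in\nsdc{\baire}$. The identity $\nsdclass(X)=\mathbf{\Lambda}$ then unfolds by chasing definitions: for $\supseteq$, any reduction $h\colon X\to X$ of $B$ to $A$ composes with an arbitrary continuous $g\colon\baire\to X$ via $\iota\circ h\circ g$ to witness $g^{-1}(B)\lew^\baire\rho^{-1}(A)$; for $\subseteq$, specializing the defining quantifier to $g=\rho$ and using $\rho\circ\iota=\id_X$ reduces $\rho^{-1}(B)\lew^\baire\rho^{-1}(A)$ back to $B\lew^X A$.

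For uniqueness, suppose $\nsdclass'\in\nsdc{\baire}$ also satisfies $\nsdclass'(X)=\mathbf{\Lambda}$. Applying the defining quantifier to $A\in\mathbf{\Lambda}$ with $g=\rho$ yields $\rho^{-1}(A)\in\nsdclass'$, so $\nsdclass\subseteq\nsdclass'$. If this inclusion were strict, $\mathsf{SLO}^\w_\baire$ (\cref{prop:generalWadgelemma}) together with nonselfduality of $\nsdclass$ would force $\widecheck{\nsdclass}\subseteq\nsdclass'$; the easy identity $\widecheck{\nsdclass}(X)=\widecheck{(\nsdclass(X))}$ (immediate from $\neg g^{-1}(B)=g^{-1}(\neg B)$) then relativizes the inclusion to $\widecheck{\mathbf{\Lambda}}\subseteq\mathbf{\Lambda}$, contradicting $\mathbf{\Lambda}\in\nsdc{X}$. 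The same dual trick handles the order-reflection needed for part~\ref{relativization-3}: from $\nsdclass_1(X)\subseteq\nsdclass_2(X)$ together with $\nsdclass_1\not\subseteq\nsdclass_2$, $\mathsf{SLO}^\w_\baire$ yields $\nsdclass_2\subseteq\widecheck{\nsdclass_1}$, hence $\nsdclass_1(X)\subseteq\widecheck{(\nsdclass_1(X))}$, again ruled out by nonselfduality.

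Part~\ref{relativization-3} is then reduced to checking that, when $X$ is uncountable, $\nsdclass(X)\in\nsdc{X}$ for \emph{every} $\nsdclass\in\nsdc{\baire}$; this is the main obstacle. I would resolve it by bootstrapping from $\cantor$: since an uncountable zero-dimensional Polish space contains $\cantor$ as a closed subspace, fix also a retraction $\sigma\colon X\to\cantor$, so that $\sigma\circ\rho\colon\baire\to\cantor$ is a retraction. The fully known description of $\wadge{\cantor}$ (Figure~\ref{fig:cantor}, from~\cite{andrettaSLO,AndCamCantorhierarchy}) provides a complete set $B\in\nsds{\cantor}$ for $\nsdclass(\cantor)\in\nsdc{\cantor}$; then $A:=\sigma^{-1}(B)\in\nsds{X}$ satisfies $\rho^{-1}(A)=(\sigma\circ\rho)^{-1}(B)\eqw^{\baire,\cantor}B$, so part~\ref{relativization-2} applied with $X=\cantor$ identifies $\wadgeclass{\rho^{-1}(A)}_\baire=\nsdclass$; applying part~\ref{relativization-2} once more, now to $X$ itself, yields $\nsdclass(X)=\wadgeclass{A}_X\in\nsdc{X}$, which completes the isomorphism.
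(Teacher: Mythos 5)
Your soft steps are all correct and match the standard machinery: the identity \( (\wadgeclass{\rho^{-1}(A)}_\baire)(X)=\wadgeclass{A}_X \) is exactly \cref{lem:relativization}, the transfer of nonselfduality along \( A\eqw^{X,\baire}\rho^{-1}(A) \) is right, and the uniqueness and order-reflection arguments via \( \mathsf{SLO}^\w_\baire \) and \( \widecheck{\nsdclass}(X)=\widecheck{(\nsdclass(X))} \) work. One small point: the theorem asserts uniqueness among \emph{all} boldface pointclasses of \( \baire \), not just those in \( \nsdc{\baire} \); you need first the observation (recorded in the paper right after \cref{def:relativization}) that any \( \nsdclass' \) with \( \nsdclass'(X) \) nonselfdual is itself nonselfdual, hence by \( \mathsf{SLO}^\w_\baire \) a nonselfdual Wadge class, before your argument applies. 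Note also that the paper does not prove this theorem at all --- it imports it from \cite[Lemma 6.5 and Theorem 7.2]{articolo_raphael} --- so the comparison is really with that reference rather than with an in-paper argument.

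The genuine gap is in the step you yourself call the main obstacle. You reduce ``\( \nsdclass(X)\in\nsdc{X} \) for all \( \nsdclass\in\nsdc{\baire} \)'' to the case \( X=\cantor \) and then dispose of that case by citing ``the fully known description of \( \wadge{\cantor} \)''. But the description in Figure~\ref{fig:cantor} is purely order-theoretic (alternation, nonselfdual pairs at limits, length \( \Theta \)), and it yields neither a complete set \( B\subseteq\cantor \) for \( \nsdclass(\cantor) \) nor even that \( \nsdclass(\cantor) \) is nonselfdual. Indeed, by your own part~\ref{relativization-2}, the assertion ``\( \nsdclass(\cantor)\in\nsdc{\cantor} \) for every \( \nsdclass\in\nsdc{\baire} \)'' is \emph{equivalent} to saying that the retraction-induced embedding \( B\mapsto r^{-1}(B) \) maps \( \nsdc{\cantor} \) \emph{onto} \( \nsdc{\baire} \), i.e.\ that every nonselfdual Wadge degree of \( \baire \) contains a set of the form \( r^{-1}(B) \) with \( B\subseteq\cantor \). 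An order-embedding between two semi-well-orders of length \( \Theta \) need not be surjective, so this does not follow formally from comparing the two pictures; it is the substantive content of \cite[Theorem 7.2]{articolo_raphael}, whose proof constructs, via the Louveau--Saint-Raymond description machinery, a nonselfdual complete set for each relativized class inside an arbitrary uncountable zero-dimensional space. As written, your argument therefore reduces the theorem to an instance of itself. The passage from a general uncountable \( X \) to \( \cantor \) via \( \sigma\circ\rho \) is correct and genuinely useful (it is essentially \cref{cor:relativization}), but the base case still requires a real proof, or at least a citation to a source establishing the degree correspondence between \( \cantor \) and \( \baire \), not merely the shape of \( \wadge{\cantor} \).
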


\cref{relativization}\ref{relativization-3} relies on the fact that if \( \nsdclass \in \nsdc{\baire} \) then \( \nsdclass(X) \) is still nonselfdual if \( X \) is uncountable. In countable spaces this ceases to be true: since all sets in a countable Polish space $X$ are $\boldsymbol{\Delta}^0_2$,
all pointclasses in $\nsdc{\baire}$ containing $\boldsymbol{\Delta}^0_2$ relativize to $\pow(X)$, which is selfdual. We will provide the correct analogue of \cref{relativization}\ref{relativization-3} for countable Polish spaces in \cref{cor:diffclassesinuncountable}.

\subsection{Cantor-Bendixson derivatives and rank} \label{subsec:CB-rank}

Recall from~\cite[Section 6.C]{kechris} the classical notion of \textit{Cantor-Bendixson derivative}, defined by
\[
D_{\cb}(Y) =\{x \in Y \mid x \text{ is not isolated in $Y$}\},
\]
and its iterates \( \derrkcb{\alpha}{X} \), recursively defined by setting \( \derrkcb{0}{X} = X \), \( \derrkcb{\alpha+1}{X} = D_{\cb}(\derrkcb{\alpha}{X}) \), and \( \derrkcb{\lambda}{X} = \bigcap_{\alpha < \lambda} \derrkcb{\alpha}{X} \) for \(\lambda\) limit. When \( X \) is Polish, there is \( \alpha < \omega_1 \) such that \( \derrkcb{\alpha}{X} = \derrkcb{\alpha+1}{X} \): the smallest such \( \alpha \) is the Cantor-Bendixson rank (\emph{CB-rank} for short) of \( X \) and is denoted by \( \rkcb{X}  \), while  $\kerp{X} = \derrkcb{\rkcb{X}}{X}$ is called \emph{perfect kernel} of \( X \) and can equivalently be defined as the largest perfect%
\footnote{A subspace of \( X \) is perfect if it is closed and has no isolated point.}
subspace of \( X \). 
 Moreover, for $x \in X \setminus \kerp{X}$ we let $\rkcbx{x}{X}$ be the unique ordinal $\alpha<\rkcb{X}$
satisfying $x \in \derrkcb{\alpha}{X} \setminus  \derrkcb{\alpha+1}{X}$.

If \( X \) is countable, then \( \kerp{X} = \emptyset \). In this case, we define the \emph{Cantor-Bendixson degree} \( \mathrm{Deg}_{\cb}(X) \) of $X$ (\emph{CB-degree} for short) as
the cardinality of $\derrkcb{\alpha}{X}$ if $\rkcb{X}=\alpha+1$ is a successor ordinal, and $\mathrm{Deg}_{\cb}(X)=\omega$  otherwise. The \emph{Cantor-Bendixson type} of $X$ (\emph{CB-type} for short)  is the pair \( \cbtype(X) = ( \rkcb{X} , \mathrm{Deg}_{\cb}(X)) \).
For compact countable Polish spaces, \( \cbtype(X) \) is a complete invariant for homeomorphism.

We say that a countable Polish space \( X \)  is \emph{simple} if it has successor CB-rank $\rank{X}_{\cb} = \alpha+1$ and  \( \mathrm{Deg}_{\cb}(X) = 1 \),
that is, if $\derrkcb{\alpha}{X}$ is a singleton.
The following is a special case of \cite[Lemma 2.4]{jslcarroy}, and shows in particular
 that if \( X \) is a countable Polish space with limit CB-rank, then \( X \) is not compact.

\begin{lemma}\label{decomposition_in_simple_clopen}
   Every countable Polish space $X$ of CB-type $(\alpha,\beta) \in \omega_1\times(\omega+1)$ admits a partition in nonempty clopen subsets
    $(C_n)_{n \in \beta}$ such that  each $C_n$ is simple,
    $\rkcb{C_n}=\alpha$ if $\alpha$ is successor, and $\rkcb{C_n}<\alpha$ with \( \sup_{n \in \omega} \rkcb{C_n} = \alpha \) if $\alpha$ is limit. 
\end{lemma}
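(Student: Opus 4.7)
I plan to prove the lemma by induction on $\alpha := \rkcb{X}$. The base case $\alpha = 1$ is immediate: $X$ is discrete with $\beta$ points, so the partition into singletons provides $\beta$ simple clopen pieces of CB-rank $1$.

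For the successor step $\alpha = \gamma+1$ with $\gamma \geq 1$, the set $F := \derrkcb{\gamma}{X}$ is closed in $X$ and, by the definition of CB-rank, discrete as a subspace, with $|F| = \beta$. Enumerating $F = \set{x_n}[n<\beta]$, my target will be a clopen partition $(C_n)_{n<\beta}$ of $X$ with $x_n \in C_n$ and $C_n \cap F = \set{x_n}$, because clopenness of each $C_n$ then forces $\derrkcb{\gamma}{C_n} = \derrkcb{\gamma}{X} \cap C_n = \set{x_n}$, making $C_n$ simple of CB-rank $\alpha$. For finite $\beta$ this is routine from zero-dimensionality. For $\beta = \omega$ I will fix a compatible complete metric $d$ on $X$ and an enumeration $X = \set{y_m}[m \in \omega]$, and build approximations $C_n^k$ stage by stage: at stage $k$, inside $R_k := X \setminus \bigcup_{n<k} C_n^{k-1}$, I pick a clopen neighborhood $C_k^k$ of $x_k$ meeting $F$ only at $x_k$ and of $d$-diameter at most $1/(k+1)$; if the point $y_{k-1}$ is still unassigned and not in $F$, I absorb it into $C_0$ through a clopen $F$-avoiding subset of $R_k \setminus C_k^k$. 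Setting $C_n := \bigcup_k C_n^k$ yields a pairwise disjoint open cover of $X$; the diameter condition will ensure that $\bigsqcup_{n \geq 1} C_n$ is closed, since any accumulation point of this union outside it would have to produce distinct $x_{n_k}$ converging in $X$, contradicting the closed discreteness of $F$, so $C_0$ is clopen too.

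For limit $\alpha$, every $x \in X$ has $\rkcbx{x}{X} < \alpha$ and thus admits a clopen neighborhood disjoint from the closed set $\derrkcb{\rkcbx{x}{X}+1}{X}$, whose CB-rank is therefore at most $\rkcbx{x}{X}+1 < \alpha$. Lindelöfness of $X$ and standard clopen disjointification then partition $X$ into clopen sets $(W_k)_{k\in\omega}$ with $\rkcb{W_k} < \alpha$; applying the inductive hypothesis to each $W_k$ and concatenating yields the required simple decomposition of $X$. The cofinality condition $\sup_n \rkcb{C_n} = \alpha$ follows because $\sup_k \rkcb{W_k} = \delta < \alpha$ would yield $\derrkcb{\delta}{W_k} = \emptyset$ for every $k$ and hence $\derrkcb{\delta}{X} = \emptyset$ by clopenness of the $W_k$'s, contradicting $\rkcb{X} = \alpha$. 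The main obstacle is the $\beta = \omega$ case of the successor step: pairwise disjoint clopen neighborhoods of the $x_n$'s need not cover $X$, and one cannot in general merge the leftover into a single $C_n$ while preserving clopenness. The shrinking-diameter device, together with the closed discreteness of $F$, is precisely what rescues the argument and makes the merged $C_0$ clopen.
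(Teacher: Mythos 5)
Your argument is correct. Note, however, that the paper does not prove this lemma at all: it is imported verbatim as a special case of Lemma~2.4 of \cite{jslcarroy}, so there is no in-text proof to compare against. Judged on its own, your proof is sound: the successor case correctly reduces to separating the countable closed discrete set $F=\derrkcb{\gamma}{X}$ by a clopen partition with $C_n\cap F=\{x_n\}$, the shrinking-diameter device does make $\bigcup_{n\geq 1}C_n$ closed (an accumulation point outside the union would force infinitely many distinct $n_j$, and $\metric{z_j}{x_{n_j}}\to 0$ would make $z$ a non-isolated point of the closed discrete set $F$), and the limit case correctly combines Lindel\"ofness, disjointification, the inductive hypothesis, and the localization $\derrkcb{\xi}{U}=\derrkcb{\xi}{X}\cap U$ for open $U$. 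The one place where your route differs from the standard one (and from what the paper itself does in closely analogous situations, e.g.\ in the proofs of \cref{sequence_clopen_derivativeto_alpha} and \cref{countable_space_each_set_is_difference}) is the hard step you flag, namely clopenness of the leftover piece when $\beta=\omega$. The usual shortcut is the generalized reduction property for $\boldsymbol{\Sigma}^0_1$ (\cite[Theorem 22.16]{kechris}): take open sets $U_n\ni x_n$ with $U_n\cap F=\{x_n\}$ for $n\geq 1$ and $U_0=V_0\cup(X\setminus F)$, so that $\bigcup_n U_n=X$; reduction yields pairwise disjoint open $C_n\subseteq U_n$ covering $X$, which are then automatically clopen, contain $x_n$, and meet $F$ only in $x_n$. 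This replaces your bare-hands bookkeeping (complete metric, enumeration of $X$, absorption of stray points into $C_0$, the diameter trick) with a one-line appeal to a classical fact, at the cost of being less self-contained; your version has the modest virtue of exhibiting explicitly why the merged piece closes up. Both are valid, and the rest of your proof would be unaffected by substituting one for the other.
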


The following is instead a direct application of \cite[Corollary 2.3]{jslcarroy}.

\begin{lemma}\label{clopen_rkgb_no_change}
	Let $X$ be a zero-dimensional  Polish space and $U \subseteq X$ be an open set such that \( U \cap \kerp{X} = \emptyset \).
	Then $\rkcb{U}= \min \set{ \alpha \leq \rkcb{X}}[ U \cap \derrkcb{\alpha}{X}= \emptyset]$. 
\end{lemma}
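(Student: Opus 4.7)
The plan is to reduce the lemma to a single transfinite identity, namely
\[
\derrkcb{\alpha}{U} \;=\; U \cap \derrkcb{\alpha}{X}
\]
for every ordinal $\alpha$, from which the conclusion will fall out by unravelling the definition of $\rkcb{U}$ on the left-hand side. I prove this identity by transfinite induction on $\alpha$. The case $\alpha = 0$ is immediate, and at a limit $\lambda$ one just pulls $U$ out of the descending intersection: $\derrkcb{\lambda}{U} = \bigcap_{\alpha<\lambda} \derrkcb{\alpha}{U} = \bigcap_{\alpha<\lambda} \bigl(U \cap \derrkcb{\alpha}{X}\bigr) = U \cap \derrkcb{\lambda}{X}$.

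The successor step is the only point where the openness of $U$ is really used. Assuming the identity at $\alpha$, I need to show that a point $x \in U \cap \derrkcb{\alpha}{X}$ is isolated in the subspace $U \cap \derrkcb{\alpha}{X}$ precisely when it is isolated in $\derrkcb{\alpha}{X}$. One direction is trivial; for the other, any basic open neighbourhood of $x$ in $\derrkcb{\alpha}{X}$ that isolates $x$ there also works inside $U$. Conversely, if $V \cap (U \cap \derrkcb{\alpha}{X}) = \{x\}$ for some open $V \subseteq X$, then $V \cap U$ is open in $X$ (because $U$ is open) and still isolates $x$ in $\derrkcb{\alpha}{X}$. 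Therefore $D_{\cb}(U \cap \derrkcb{\alpha}{X}) = U \cap D_{\cb}(\derrkcb{\alpha}{X})$, which combined with the induction hypothesis yields $\derrkcb{\alpha+1}{U} = U \cap \derrkcb{\alpha+1}{X}$.

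With the identity in hand, write $\beta$ for the ordinal on the right-hand side of the statement; note $\beta$ is well-defined because the hypothesis $U \cap \kerp{X} = \emptyset$ ensures that $\rkcb{X}$ itself lies in the set. Since $U \cap \derrkcb{\beta}{X} = \emptyset$, the identity gives $\derrkcb{\beta}{U} = \emptyset = \derrkcb{\beta+1}{U}$, hence $\rkcb{U} \leq \beta$. For the reverse inequality, fix $\alpha < \beta$; by minimality of $\beta$ the set $U \cap \derrkcb{\alpha}{X}$ is nonempty. It is an open (hence Polish) subspace of $\derrkcb{\alpha}{X}$ contained in $U$, so it is disjoint from $\kerp{X}$ and therefore contains no nonempty perfect subspace; a nonempty Polish space without perfect subspace necessarily has an isolated point, and by the same openness argument as above such a point is actually isolated in $\derrkcb{\alpha}{X}$. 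This produces an element of $U \cap \derrkcb{\alpha}{X} \setminus U \cap \derrkcb{\alpha+1}{X} = \derrkcb{\alpha}{U} \setminus \derrkcb{\alpha+1}{U}$, so $\rkcb{U} > \alpha$. Letting $\alpha \to \beta$ gives $\rkcb{U} \geq \beta$, completing the proof.

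The only step requiring any care is the transfer of isolated points between the ambient space $\derrkcb{\alpha}{X}$ and the open subspace $U \cap \derrkcb{\alpha}{X}$; everything else is bookkeeping. I expect this to be the main (and essentially only) obstacle, and it is resolved cleanly by the hypothesis that $U$ is open in $X$.
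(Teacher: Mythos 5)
Your proof is correct. Note that the paper itself gives no argument for this lemma: it is stated as ``a direct application of \cite[Corollary 2.3]{jslcarroy}'', and the content of that citation is precisely your key identity $\derrkcb{\alpha}{U} = U \cap \derrkcb{\alpha}{X}$ for $U$ open, i.e.\ the fact that the Cantor--Bendixson derivative commutes with passing to open subspaces. So your proposal supplies in full the details the paper delegates to an external reference, and the transfinite induction with the isolated-point transfer at successor stages is exactly the standard way to prove that cited fact. One small remark: once the identity is established, the lower bound $\rkcb{U} \geq \beta$ does not actually need the Cantor--Bendixson/perfect-set argument you invoke. If $\derrkcb{\alpha}{U} = \derrkcb{\alpha+1}{U}$ for some $\alpha < \beta$, then the derivative sequence of $U$ stabilizes from $\alpha$ on, so $\emptyset = \derrkcb{\beta}{U} = \derrkcb{\alpha}{U} = U \cap \derrkcb{\alpha}{X}$, contradicting the minimality of $\beta$. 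Your detour through isolated points is nevertheless correct (a nonempty Polish subspace of $U$ with no isolated points would contain a compact perfect set, hence meet $\kerp{X}$), just more work than necessary.
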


\subsection{The pointed gluing operation} 

We adopt a standard notation for sequences: for example, \( \lh(s) \) denotes the length of \( s \),  $s \conc t$ stands for the concatenation of $s$ and $t$, while for \( i \in \omega \) and \( n \in \omega  \) we denote by \( (i)^n \) (respectively, \( \inftycostbaire{i} \)) the constant sequence of length \( n \) (respectively, the \( \omega \)-sequence) with value \( i \). If $X$ is a topological space, we say that a sequence $(A_n)_{n \in \omega}$ of subsets of \( X \) converges to a point \( x \in X \), in symbols $A_n \to x$, if any open neighborhood of $x$ contains all but finitely many $A_n$'s.

The following construction, taken from~\cite[Section 3]{jslcarroy}, is implicitly used in the area since Wadge's thesis~\cite{wadgephd} and has been considered, sometimes with slight variations, by several authors (see e.g.\ \cite[Section 5]{Selivanovnew}).
	Given a sequence $(A_n)_{n \in \omega}$ with $A_n \subseteq \baire$, its \emph{pointed gluing} is the set
	\begin{equation*}
		\ptglsq{(A_n)_{n \in \omega}} = \inftycostbaire{0} \cup \bigcup_{n \in \omega}\{{(0)^{n}}\conc(1)\conc x \mid x \in A_n\}
	\end{equation*}
	Intuitively, \( \ptglsq{(A_n)_{n \in \omega}} \) is thus obtained by taking copies  of the \( A_n \)'s which converge to \(  \inftycostbaire{0} \), together with such limit.
	The functional counterpart of the pointed gluing operation is defined by considering a sequence \( (f_n)_{n \in \omega} \) of functions \( f_n \colon A_n \to B_n \), and letting 
	\[ 
	\ptglsq{(f_n)_{n \in \omega}} \colon \ptglsq{(A_n)_{n \in \omega}} \to \ptglsq{(B_n)_{n \in \omega}} 
	\] 
	be defined as the map sending \( \inftycostbaire{0} \) to itself and points of the form \( (0)^n\conc(1)\conc x \) with \( x \in A_n \) to \( (0)^n\conc(1)\conc f_n(x) \). 
When $A_n = A $ or \( f_n = f \) for all $n \in \omega$, to simplify the notation we write $\ptglsq{A}$ instead of $\ptglsq{(A_n)_{n \in \omega}}$ or \( \ptglsq{f} \) instead of \( \ptglsq{(f_n)_{n \in \omega}} \), respectively.

It is easy to verify that if  $A_n$ is closed (respectively, compact), then so is $\ptglsq{(A_n)_{n \in \omega}}$, and that $\ptglsq{(f_n)_{n \in \omega}}$ is continuous (respectively, injective, an embedding, or a homeomorphism) if so are the \( f_n \)'s. Moreover, if the sequence of ordinals $(\rkcb{A_n})_{n\in\omega}$ is non-decreasing, then an easy computation shows that $\rkcb{\ptglsq{(A_n)_{n \in \omega}}}=(\sup_{n \in \omega}\rkcb{A_n})+1$ (see e.g.\ \cite[Proposition 3.1]{jslcarroy}).

\subsection{Difference hierarchy} \label{subsec:diffhierarchy}

We recall the definition of difference hierarchy (see e.g.\ \cite[Section 22.E]{kechris}). Let $X$ be a Polish space, $\alpha \geq 1$ be a countable ordinal, and $(A_{\gamma})_{\gamma<\alpha}$ be a sequence of subsets of \( X \). Then we define
\[
	D_{\alpha}((A_{\gamma})_{\gamma < \alpha}) =
		\bigcup \bigg \{A_{\gamma} \setminus \bigcup_{\gamma' < \gamma} A_{\gamma'} \mid \gamma < \alpha \text{ and } \gamma \text{ has parity opposite to }\alpha\bigg\}.
\]

Moreover, when the boldface pointclass  $\nsdclass\subseteq\pow(X)$ is different from \( \{ \emptyset \} \) we set $
\difference{\alpha}{\nsdclass} = \{\differenceseq{\alpha}{(A_{\gamma})_{\gamma < \alpha}} \mid A_{\gamma}\in \nsdclass\}$. By convention, we set $\differenceseq{0}{\nsdclass} = 
\set{\emptyset}$. Notice also that \( \differenceseq{1}{\nsdclass} = \nsdclass \).

We will always work with \( \nsdclass = \boldsymbol{\Sigma}^0_1(X) \). One can easily verify that \( \differenceopenX{\alpha}{X}  \) is a boldface pointclass (in \( X \)), and that \( \differenceopenX{\alpha}{X}  = \differenceopen{\alpha}(X) \). 
Thus 
we can unambiguously 
denote by 
\( \differenceopenXcheck{\alpha}{X} \)
the dual of \( \differenceopenX{\alpha}{X} \).

Wadge himself proved in~\cite{wadgephd} that the first \( \omega_1 \)-many nonselfdual levels of \( \wadge{\baire} \) are occupied by the difference classes \( \differenceopen{\alpha} \), \( \alpha < \omega_1 \), and their duals.
It  follows that \( \rank{\differenceopen{\alpha}}_{\baire} = 2 \alpha+1 \), and that using the well-known successor and countable sum operations from~\cite[Chapter 3]{wadgephd} one can define complete sets (up to Wadge equivalence) for such classes. More precisely, we recursively  define a sequence of Wadge degrees \( (\mathbf{D}_\alpha)_{\alpha< \omega_1} \) by setting
\begin{equation} \label{eq:differencedegrees}
\mathbf{D}_\alpha = 
\begin{cases}
  \wadgedegree{\emptyset}^\baire & \text{if }\alpha = 0 \\
 \wadgedegree{ \{ (0)^n \conc (m+1) \conc x \mid n,m \in \omega \wedge x \in \neg B_\beta \}}^\baire & \text{if }\alpha = \beta+1 \\ 
 \wadgedegree{ \{ (0)^n \conc (m+1) \conc x \mid n,m \in \omega \wedge x \in \neg B_{\alpha_n} \}}^\baire & \text{if } \alpha \text{ is limit,}
\end{cases}
\end{equation}
where \( B_\beta \) is any set in \( \mathbf{D}_\beta \), \( (\alpha_n)_{n \in \omega}\) is some/any increasing sequence cofinal in \(\alpha\), and \( B_{\alpha_n} \) is an arbitrary set in \( \mathbf{D}_{\alpha_n} \). An easy induction shows that the sequence \( (\mathbf{D}_\alpha)_{\alpha < \omega_1} \) is well-defined, i.e.\ that up to \( \eqw^\baire \) it is independent of the choice of \( B_\beta \), \((\alpha_n)_{n \in \omega} \), and \( B_{\alpha_n} \). Moreover, one can prove by induction again that if \( B_\alpha \in  \mathbf{D}_\alpha \) then \( \differenceopen{\alpha} = \wadgeclass{(B_\alpha)}_\baire \).

\section{Our toolbox}

\subsection{More on the retraction method}

The retraction method immediately settles our problem for zero-dimensional Polish spaces which are not \emph{\( \sigma \)-compact}, i.e.\ cannot be written as a countable union of compact spaces, and for those which are uncountable and compact. 
This shows that we could in principle concentrate on \( \sigma \)-compact non-compact spaces, although our analysis will be more general and work for all zero-dimensional Polish spaces, without those limitations.

The next two results do not require any extra assumption beyond our base theory \( \ZF + \DC(\RR) \).

\begin{lemma}  \label{lem:biembimpliessamewadgehierarchy}
Suppose that \( X,Y \) are zero-dimensional Polish spaces, each of which is homeomorphic to closed subset of the other one. Then \( \wadge{X} \) and \( \wadge{Y} \) are isomorphic.
\end{lemma}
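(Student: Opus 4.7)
The plan is to invoke the retraction method from Section~\ref{subsec:retraction}. Since $X$ (respectively, $Y$) is homeomorphic to a closed subspace of $Y$ (respectively, $X$), I would fix homeomorphisms $j_X \colon X \to Y$ and $j_Y \colon Y \to X$ onto closed images, and then compose retractions onto those images with the relevant homeomorphisms to obtain continuous maps $\sigma_X \colon Y \to X$ and $\sigma_Y \colon X \to Y$ satisfying $\sigma_X \circ j_X = \id_X$ and $\sigma_Y \circ j_Y = \id_Y$. By the discussion in Section~\ref{subsec:retraction}, the assignments $\phi(\wadgedegree{A}^X) = \wadgedegree{\sigma_X^{-1}(A)}^Y$ and $\psi(\wadgedegree{B}^Y) = \wadgedegree{\sigma_Y^{-1}(B)}^X$ give well-defined order- and complement-preserving maps $\phi \colon \wadge{X} \to \wadge{Y}$ and $\psi \colon \wadge{Y} \to \wadge{X}$.

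The crucial observation, already recorded in Section~\ref{subsec:retraction} for literal closed inclusions, is that $A \eqw^{X,Y} \sigma_X^{-1}(A)$ for every $A \subseteq X$: the map $\sigma_X$ itself witnesses $\sigma_X^{-1}(A) \lew^{Y,X} A$, while $j_X$ witnesses $A \lew^{X,Y} \sigma_X^{-1}(A)$ because $j_X^{-1}(\sigma_X^{-1}(A)) = (\sigma_X \circ j_X)^{-1}(A) = A$. Symmetrically, $B \eqw^{Y,X} \sigma_Y^{-1}(B)$ for every $B \subseteq Y$. Composing these two bi-reductions across spaces yields, for every $A \subseteq X$, the chain
\[
A \;\eqw^{X,Y}\; \sigma_X^{-1}(A) \;\eqw^{Y,X}\; \sigma_Y^{-1}(\sigma_X^{-1}(A)),
\]
which by the transitivity of continuous reducibility across the ambient spaces gives $A \eqw^{X} \sigma_Y^{-1}(\sigma_X^{-1}(A))$; that is, $\psi \circ \phi$ is the identity on $\wadge{X}$. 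The symmetric argument yields $\phi \circ \psi = \id_{\wadge{Y}}$, so $\phi$ and $\psi$ are mutually inverse order-isomorphisms.

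The only real subtlety I expect is mild book-keeping, namely that Section~\ref{subsec:retraction} is phrased for closed inclusions $X \subseteq Y$, whereas here one only has homeomorphisms onto closed subsets, so the roles of ``inclusion'' and ``retraction'' have to be transported through the chosen embeddings $j_X$ and $j_Y$. Beyond that, the argument is essentially a Schröder--Bernstein trick via composition; in particular, it uses neither determinacy nor the semi-well-foundedness of the Wadge hierarchies involved, which is why the statement can be placed before the main theorems and within plain $\ZF + \DC(\RR)$.
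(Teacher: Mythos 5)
Your argument is correct and is essentially the paper's own proof: the paper likewise composes the two retractions (transported through the homeomorphism) and verifies the same chain of cross-space Wadge equivalences, merely phrasing the conclusion as surjectivity of the single embedding \( A \mapsto \rho_1^{-1}(A) \) rather than as a pair of mutually inverse maps. Your observations that the bookkeeping with the embeddings \( j_X, j_Y \) is the only subtlety and that no determinacy is needed both match the paper.
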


\begin{proof}
Without loss of generality, \( X \) is a closed subset of \( Y \) and \( Y \) is a closed subspace of some \( X ' \) which is homeomorphic to \( X \) via some \( h \colon X \to X' \). Let \( \rho_1 \colon Y \to X \) and \( \rho_2 \colon X' \to Y \) be retractions. We already observed that \( A \mapsto \rho_1^{-1} (A) \) provides an embedding of \( \langle \pow(X), {\lew^X}, {\neg} \rangle \) into \( \langle \pow(Y), {\lew^Y}, {\neg} \rangle \), and hence it canonically induces an embedding of \( \wadge{X} \) into \( \wadge{Y} \): we claim that the latter is also surjective (hence an isomorphism), i.e.\ that for every \( B \subseteq Y \) there is \( A \subseteq X \) such that \( B \eqw^Y \rho_1^{-1}(A) \). To see this, it is enough to notice that since \( \rho_1 \) and \( \rho_2 \) are retractions and \( h \) is a homeomorphism, then
\[ 
B \eqw^{Y,X'} \rho_2^{-1}(B) \eqw^{X',X} h^{-1}(\rho_2^{-1}(B)) \eqw^{X,Y} \rho_1^{-1}(h^{-1}(\rho_2^{-1}(B))).
 \] 
Thus setting \( A = h^{-1}(\rho_2^{-1}(B)) \) we are done.
\end{proof}

Combining the above lemma with the universality properties of \( \baire \) and \( \cantor \) (\cite[Theorem 7.8]{kechris}), the Cantor-Bendixson theorem (\cite[Corollary 6.5]{kechris}) and the Hurewicz theorem (\cite[Theorem 7.10]{kechris}) we obtain the following.

\begin{proposition} \label{prop:isomorphicWadgethroughretractions} 
Let \( X \) be a zero-dimensional Polish space.
\begin{enumerate-(1)}
\item \label{prop:isomorphicWadgethroughretractions-1}
If  \( X \) is not \(\sigma\)-compact,%
\footnote{Notice that this hypothesis implies in particular that \( X \) is uncountable.}
then \( \wadge{X} \) is isomorphic to \( \wadge{\baire} \).
\item \label{prop:isomorphicWadgethroughretractions-2}
If \( X \) is compact and uncountable, then \( \wadge{X} \) is isomorphic to \( \wadge{\cantor} \).
\end{enumerate-(1)}
\end{proposition}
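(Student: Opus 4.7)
The plan is to reduce both parts to mutual closed embeddings and then invoke \cref{lem:biembimpliessamewadgehierarchy}. Since \( X \) is a zero-dimensional Polish space, by the standard universality result (\cite[Theorem 7.8]{kechris}) it can be construed as a closed subspace of \( \baire \). Similarly, in part~\ref{prop:isomorphicWadgethroughretractions-2}, since \( X \) is moreover compact and metrizable, it embeds as a closed subspace of \( \cantor \). So in each case one direction of the biembedding is automatic, and the task is to produce a closed homeomorphic copy of \( \baire \) (respectively, \( \cantor \)) inside \( X \).

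For part~\ref{prop:isomorphicWadgethroughretractions-1}, I would appeal to the Hurewicz theorem (\cite[Theorem 7.10]{kechris}): a Polish space fails to be \(\sigma\)-compact if and only if it contains a closed subset homeomorphic to \( \baire \). Hence \( X \) and \( \baire \) each embed as a closed subspace of the other, and \cref{lem:biembimpliessamewadgehierarchy} yields \( \wadge{X} \cong \wadge{\baire} \).

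For part~\ref{prop:isomorphicWadgethroughretractions-2}, I would use the Cantor--Bendixson theorem (\cite[Corollary 6.5]{kechris}): since \( X \) is uncountable Polish, its perfect kernel \( \kerp{X} \) is nonempty, and being a closed subset of the compact zero-dimensional space \( X \), it is itself a nonempty compact, perfect, zero-dimensional metrizable space. By the topological characterization of the Cantor space, \( \kerp{X} \) is homeomorphic to \( \cantor \). Thus \( \cantor \) embeds as a closed subspace of \( X \), and conversely \( X \) embeds as a closed subspace of \( \cantor \) as noted above. Applying \cref{lem:biembimpliessamewadgehierarchy} gives \( \wadge{X} \cong \wadge{\cantor} \).

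The argument is essentially bookkeeping: the real content lies in \cref{lem:biembimpliessamewadgehierarchy} (which has already been proved) and in the three classical theorems invoked. The main subtlety, if any, is simply to confirm that in part~\ref{prop:isomorphicWadgethroughretractions-1} no \(\AD\) is actually used at this stage — the biembedding argument produces an order isomorphism of Wadge hierarchies regardless of determinacy — whereas \cref{thm:maintheorem} will of course require \(\AD\) downstream in order to read off the internal structure of \( \wadge{\baire} \) and \( \wadge{\cantor} \).
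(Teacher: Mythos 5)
Your proposal is correct and follows essentially the same route as the paper, which likewise obtains both parts by combining \cref{lem:biembimpliessamewadgehierarchy} with the universality of \( \baire \) and \( \cantor \), the Hurewicz theorem, and the Cantor--Bendixson theorem (your detour through \( \kerp{X} \) and Brouwer's characterization of \( \cantor \) is an equivalent way of extracting the closed copy of \( \cantor \) that \cite[Corollary 6.5]{kechris} provides directly). Your observation that no determinacy is needed at this stage also matches the paper, which explicitly notes that these two results require nothing beyond \( \ZF + \DC(\RR) \).
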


\begin{remark} 
Since \(\sigma\)-compactness already appeared in~\cite[Theorem 5.12(2)]{mottorosquasiPolish} as a dividing line for Wadge-like hierarchies on zero-dimensional Polish space, one may be tempted to conjecture that \cref{prop:isomorphicWadgethroughretractions}\ref{prop:isomorphicWadgethroughretractions-1} can be turned into a characterization. We will however show that this is not the case: by \cref{thm:maintheorem}, there are \(\sigma\)-compact spaces whose Wadge hierarchy is isomorphic to \( \wadge{\baire} \) (take any \(\sigma\)-compact space with non-compact perfect kernel, e.g.\ \( \omega \times \cantor \)), and there are non-compact spaces whose Wadge hierarchy is isomorphic to \( \wadge{\cantor} \) (take any non-compact space with compact kernel and finite CB-rank, e.g.\ \( \omega \oplus \cantor \) where \( \oplus \) denotes the disjoint sum and \( \omega \) is discrete). 
\end{remark}

\subsection{More on the relativization method}

The next lemma shows that for any retraction \( \rho \colon \baire \to X \), the map \( \nsdclass \mapsto \rho^{-1}(\nsdclass) \) from \( \wc{X} \) to \( \wc{\baire} \) is a right inverse of (the restriction to \( \wc{\baire} \) of) the relativization map \( \nsdclass \mapsto \nsdclass(X) \). The proof is the same as the first part of the proof of~\cite[Lemma 6.5]{articolo_raphael} and does not require extra assumptions beyond \( \ZF+ \DC(\RR) \).

\begin{lemma} \label{lem:relativization} 
Let \( X \subseteq \baire \) be closed, and \( \rho \colon \baire \to X \) be a retraction. Then for every \( \mathbf{\Lambda} \in \wc{X} \)
\[ 
(\rho^{-1}(\mathbf{\Lambda}))(X) = \mathbf{\Lambda}.
 \] 
\end{lemma}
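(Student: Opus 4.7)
The plan is to fix a generator: write $\mathbf{\Lambda} = \wadgeclass{A}_X$ for some \( A \subseteq X \), so that by definition \( \rho^{-1}(\mathbf{\Lambda}) = \wadgeclass{(\rho^{-1}(A))}_\baire \). Then I want to establish the two inclusions separately, each time unraveling the definitions and using the two defining properties of a retraction: that \( \rho \) is continuous, and that \( \rho \restriction X = \id_X \).

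For the inclusion $\mathbf{\Lambda} \subseteq (\rho^{-1}(\mathbf{\Lambda}))(X)$, I would take \( B \in \mathbf{\Lambda} \), so there is a continuous \( f \colon X \to X \) with \( f^{-1}(A) = B \). Given any continuous \( g \colon \baire \to X \), I would consider the composition \( f \circ g \colon \baire \to X \) as a map \( \baire \to \baire \); since it takes values in \( X \), the identity \( \rho \restriction X = \id_X \) gives \( \rho \circ f \circ g = f \circ g \), hence
\[
(f \circ g)^{-1}(\rho^{-1}(A)) = (f \circ g)^{-1}(A) = g^{-1}(B),
 \]
which shows \( g^{-1}(B) \in \wadgeclass{(\rho^{-1}(A))}_\baire = \rho^{-1}(\mathbf{\Lambda}) \), as required.

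For the converse inclusion $(\rho^{-1}(\mathbf{\Lambda}))(X) \subseteq \mathbf{\Lambda}$, I would take \( B \subseteq X \) satisfying \( g^{-1}(B) \in \rho^{-1}(\mathbf{\Lambda}) \) for every continuous \( g \colon \baire \to X \) and specialize to \( g = \rho \), obtaining \( \rho^{-1}(B) \lew^\baire \rho^{-1}(A) \) via some continuous \( h \colon \baire \to \baire \). Letting \( \iota \colon X \to \baire \) be the inclusion, the map \( \rho \circ h \circ \iota \colon X \to X \) is continuous and, using \( \rho \circ \iota = \id_X \) on \( X \),
\[
(\rho \circ h \circ \iota)^{-1}(A) = \iota^{-1}(h^{-1}(\rho^{-1}(A))) = \iota^{-1}(\rho^{-1}(B)) = B,
 \]
so \( B \lew^X A \), i.e.\ \( B \in \mathbf{\Lambda} \).

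I do not expect a real obstacle here: the whole argument is a bookkeeping exercise once one observes that for any continuous \( h \colon \baire \to \baire \) whose image lies in \( X \), post-composition with \( \rho \) is a no-op. The only mildly delicate point is to pick the right test map in the nontrivial direction, where taking \( g = \rho \) immediately reduces the relativization condition to a statement about \( \rho^{-1}(B) \) and \( \rho^{-1}(A) \), which is then pulled back to \( X \) through \( \iota \).
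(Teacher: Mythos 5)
Your proof is correct and follows essentially the same route as the paper's: both directions hinge on testing the relativization condition with \( g = \rho \) in one direction and composing a witness \( f \colon X \to X \) with an arbitrary \( g \colon \baire \to X \) in the other, using \( \rho \restriction X = \id_X \) to transfer between \( A \) and \( \rho^{-1}(A) \). The only difference is cosmetic: you spell out the explicit witnesses (e.g.\ \( \rho \circ h \circ \iota \)) where the paper invokes \( A \eqw^{X,\baire} \rho^{-1}(A) \) as shorthand.
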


\begin{proof}
Let \( A \subseteq X \) be complete for \( \mathbf{\Lambda} \). If \( B \in (\rho^{-1}(\mathbf{\Lambda}))(X) \) then  \( \rho^{-1}(B) \in \rho^{-1}(\mathbf{\Lambda}) \) because  \( \rho \) is continuous, thus \( \rho^{-1}(B) \lew^ \baire \rho^{-1}(A) \) by definition of \( \rho^{-1}(\mathbf{\Lambda}) \), hence \( B \lew^X A \), and finally \( B \in \mathbf{\Lambda} \) by choice of \( A \).

Conversely, pick any \( B \in \mathbf{\Lambda} \),  let \( f \colon X \to X \) witness \( B \lew^X A \), and let \( g \colon \baire \to X \) be any continuous function. Then \( f \circ g \) witnesses \( g^{-1}(B) \lew^{\baire,X} A \), and since \( A \eqw ^{X,\baire} \rho^{-1}(A) \) we get \( g^{-1}(B) \lew^{\baire} \rho^{-1}(A) \), i.e.\ \( g^{-1}(B) \in \rho^{-1}(\mathbf{\Lambda}) \). Since \( g \) was arbitrary, this shows that \( B \in (\rho^{-1}(\mathbf{\Lambda}))(X) \), as desired.
\end{proof}

\begin{corollary}[\( \AD \)] \label{cor:relativization} 
Let \( X \subseteq Y \) be zero-dimensional Polish spaces with \( X \) closed in \( Y \), and let \( \nsdclass \subseteq \pow(\baire) \) be a boldface pointclass. If \( \nsdclass(X) \in \nsdc{X} \), then \( \nsdclass(Y) \in \nsdc{Y} \). Moreover, if \( A \subseteq X \) is complete for \( \nsdclass(X) \) and \( \rho \colon Y \to X \) is any retraction, then \( \rho^{-1}(A) \) is complete for \( \nsdclass(Y) \).
\end{corollary}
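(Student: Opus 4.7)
The plan is to prove the corollary by showing that $\rho^{-1}(A)$ is a complete set for $\nsdclass(Y)$ in $Y$, whence $\nsdclass(Y) = \wadgeclass{\rho^{-1}(A)}_Y$ will automatically be nonselfdual in $Y$ as soon as one verifies $\neg \rho^{-1}(A) \notin \nsdclass(Y)$.

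First I would check that $\rho^{-1}(A) \in \nsdclass(Y)$: given any continuous $g \colon \baire \to Y$, the composition $\rho \circ g \colon \baire \to X$ is continuous, so $(\rho\circ g)^{-1}(A) = g^{-1}(\rho^{-1}(A))$ lies in $\nsdclass$ because $A \in \nsdclass(X)$. Downward closure of $\nsdclass(Y)$ under $\lew^Y$ then gives the easy inclusion $\wadgeclass{\rho^{-1}(A)}_Y \subseteq \nsdclass(Y)$.

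The main work is the reverse inclusion. For any $B \in \nsdclass(Y)$, Wadge's Lemma (\cref{prop:generalWadgelemma}) yields either $B \lew^Y \rho^{-1}(A)$, which is what we want, or $\neg \rho^{-1}(A) \lew^Y B$. The idea is to rule out the second possibility by a single argument which will simultaneously imply completeness and nonselfduality (by later taking $B = \neg \rho^{-1}(A)$). The key observation, and the only genuinely nontrivial step, is that since $\rho$ restricts to the identity on $X$, we have $\rho^{-1}(A) \cap X = A$. Assuming $\neg \rho^{-1}(A) \lew^Y B$, downward closure of $\nsdclass(Y)$ gives $\neg \rho^{-1}(A) \in \nsdclass(Y)$; then, for any continuous $g_0 \colon \baire \to X$, precomposing with the inclusion $\iota \colon X \hookrightarrow Y$ yields a continuous map $\iota \circ g_0 \colon \baire \to Y$ satisfying $(\iota \circ g_0)^{-1}(\neg \rho^{-1}(A)) = g_0^{-1}(\neg A)$. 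Hence $\neg A \in \nsdclass(X)$, and since $A$ is complete for $\nsdclass(X)$ this forces $\neg A \lew^X A$, contradicting $\nsdclass(X) \in \nsdc{X}$.

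Putting the two directions together gives $\nsdclass(Y) = \wadgeclass{\rho^{-1}(A)}_Y$, and applying the ruling-out argument with $B = \neg \rho^{-1}(A)$ shows that $\rho^{-1}(A)$ is nonselfdual in $Y$; therefore $\nsdclass(Y) \in \nsdc{Y}$ with $\rho^{-1}(A)$ as a complete element. The heart of the argument is really just the reflection trick $\iota \circ g_0$ combined with the identity $\rho^{-1}(A) \cap X = A$; everything else is a routine unwinding of the definition of $\nsdclass(Y)$ together with Wadge's Lemma.
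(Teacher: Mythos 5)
Your proof is correct, but it takes a genuinely different route from the paper's. The paper proves the corollary by composing retractions: it fixes $\rho' \colon \baire \to Y$, notes that $\rho \circ \rho'$ retracts $\baire$ onto $X$, and then applies \cref{lem:relativization} twice together with the uniqueness statement of \cref{relativization}\ref{relativization-2} to obtain the pointclass identity $\rho^{-1}(\nsdclass(X)) = \nsdclass(Y)$, from which both conclusions drop out simultaneously. You instead verify the identity $\nsdclass(Y) = \wadgeclass{\rho^{-1}(A)}_Y$ (which is the same identity, given the definition of $\rho^{-1}(\nsdclass(X))$) by hand: the inclusion $\wadgeclass{\rho^{-1}(A)}_Y \subseteq \nsdclass(Y)$ is a direct unwinding of the definition of relativization via $\rho \circ g$, and the reverse inclusion plus nonselfduality both reduce, via Wadge's Lemma, to ruling out $\neg\rho^{-1}(A) \in \nsdclass(Y)$, which you do by pulling back along $\iota \circ g_0$ and using $\rho^{-1}(A) \cap X = A$ to contradict nonselfduality of $\nsdclass(X)$. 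Your argument is more self-contained --- it needs only \cref{prop:generalWadgelemma} and the definition of relativization, and in particular avoids the uniqueness part of \cref{relativization}\ref{relativization-2} imported from \cite{articolo_raphael} --- whereas the paper's version is shorter given the machinery already in place and phrases the conclusion as a clean equality of pointclasses that is reused elsewhere. Both arguments use \( \AD \) (yours through Wadge's Lemma on $Y$), so neither has an advantage on hypotheses.
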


\begin{proof}
Without loss of generality, we may assume that \( Y \) (and hence also \( X \)) is a closed subset of \( \baire \). Let \( A \) and \( \rho \) be as in the statement, and fix a retraction \( \rho' \colon \baire \to Y \), so that \( \rho \circ \rho' \) is a retraction of \( \baire \) onto \( X \). Then \( (\rho \circ \rho')^{-1}(\nsdclass(X)) = \nsdclass \) by \cref{lem:relativization} applied to \( \mathbf{\Lambda} = \nsdclass(X) \) and \cref{relativization}\ref{relativization-2}. Using this and applying once again \cref{lem:relativization} with  
\( \mathbf{\Lambda} = \rho^{-1}(\nsdclass(X)) \in \wc{Y} \), it follows that 
\[ 
\rho^{-1}(\nsdclass(X)) = ((\rho')^{-1}(\rho^{-1}(\nsdclass(X))))(Y) = ((\rho \circ \rho')^{-1}(\nsdclass(X))) (Y) =   \nsdclass(Y) , 
\]
which proves at once that \( \nsdclass(Y) \) is nonselfdual (because so is \( \rho^{-1}(\nsdclass(X)) \)) and that \( \rho^{-1}(A) \) is complete for \( \nsdclass(Y) \).
\end{proof}

The relativization method and, in particular, \cref{cor:relativization}, gives a general version of the classical computation in $\baire$
of the supremum of at most countably many nonselfdual classes, that we use several times in this paper.

\begin{lemma}[\( \AD \)] \label{calculus_sup}
Let $X$ be a zero-dimensional Polish space, $(X_i)_{i\in I}$ be a clopen partition of $X$  for some $I\subseteq\omega$,
and $(\nsdclass_i)_{i\in I}$ be boldface pointclasses in $\baire$.
Suppose that  $\nsdclass_i(X_i) \in \nsdc{X_i}$ for all \( i \in I \),  and let $A_i\subseteq X_i$ be complete for $\nsdclass_i(X_i)$.
Then the pointclass $\nsdclass=\wadgeclass{A}_X \in \wc{X}$ generated by  $A=\bigcup_{i \in I} A_i$ is the smallest Wadge class such that 
 $\nsdclass \supseteq \nsdclass_i(X)$ for all \( i \in I \).

Moreover, if for every \( i \in I \) there is \( j \in I \) such that \( \widecheck{\nsdclass}_i \subseteq \nsdclass_j \), then \( \nsdclass \in \sdc{X} \).
\end{lemma}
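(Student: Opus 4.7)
The plan is to leverage the completeness provided by \cref{cor:relativization} and to glue witnesses piecewise using the clopen partition. For each $i \in I$ I would fix a retraction $\rho_i \colon X \to X_i$, defined by choosing some $x_i \in X_i$ and letting $\rho_i$ be the identity on $X_i$ and constantly $x_i$ on the clopen set $X \setminus X_i$. Since $X_i$ is closed in $X$ and $\nsdclass_i(X_i) \in \nsdc{X_i}$, \cref{cor:relativization} directly yields $\nsdclass_i(X) \in \nsdc{X}$ with $\rho_i^{-1}(A_i)$ as a complete set.

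To see $\nsdclass_i(X) \subseteq \nsdclass$, I would verify $\rho_i^{-1}(A_i) \lew^X A$ using the continuous map $\iota_i \circ \rho_i \colon X \to X$, where $\iota_i \colon X_i \hookrightarrow X$ is the inclusion: indeed $(\iota_i \circ \rho_i)^{-1}(A) = \rho_i^{-1}(A \cap X_i) = \rho_i^{-1}(A_i)$, since $A \cap X_i = A_i$ by construction. For the minimality, I would suppose $\nsdclass' = \wadgeclass{A'}_X$ contains every $\nsdclass_i(X)$. Then for each $i$ there is a continuous $h_i \colon X \to X$ witnessing $\rho_i^{-1}(A_i) \lew^X A'$; because $\rho_i$ restricts to the identity on $X_i$, we get $(h_i|_{X_i})^{-1}(A') = A_i$. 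The clopen partition now lets me glue these restrictions into a single continuous $h \colon X \to X$ by setting $h(x) = h_i(x)$ whenever $x \in X_i$, and a direct computation gives $h^{-1}(A') = \bigcup_{i \in I} A_i = A$, proving $A \lew^X A'$ and hence $\nsdclass \subseteq \nsdclass'$.

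For the moreover part, I would note that each $\widecheck{\nsdclass}_i(X_i)$ is also in $\nsdc{X_i}$, with complete set $X_i \setminus A_i$. Applying the first part of the lemma (just proved) to the family $(\widecheck{\nsdclass}_i)_{i \in I}$ equipped with these complete sets shows that the Wadge class $\wadgeclass{\neg A}_X$ generated by $\neg A = \bigcup_{i \in I}(X_i \setminus A_i)$ is the smallest Wadge class in $X$ containing all $\widecheck{\nsdclass}_i(X)$. Under the hypothesis, each $\widecheck{\nsdclass}_i(X)$ is contained in some $\nsdclass_j(X) \subseteq \nsdclass$, so $\wadgeclass{\neg A}_X \subseteq \nsdclass$; this means $\neg A \in \nsdclass$, whence $A$ is selfdual.

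The main obstacle I foresee is purely bookkeeping: keeping track of which class lives in which ambient space (among $\baire$, $X_i$, and $X$) and checking that the restriction-and-gluing operations on continuous functions interact correctly with the relativization maps. Beyond \cref{cor:relativization} no genuinely new ideas seem necessary.
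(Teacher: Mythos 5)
Your proof is correct. The first part coincides with the paper's argument: fix retractions $\rho_i \colon X \to X_i$, invoke \cref{cor:relativization} to see that $\rho_i^{-1}(A_i)$ generates $\nsdclass_i(X)$, and glue reductions along the clopen partition for minimality. For the ``moreover'' part you take a mildly different route. The paper passes to $\baire$: it fixes retractions $r_i \colon \baire \to X_i$, uses the hypothesis $\widecheck{\nsdclass}_i \subseteq \nsdclass_j$ inside $\baire$ to produce an explicit reduction $X_i \setminus A_i \lew^{X_i,X_j} A_j$, and then glues the resulting maps $f_i \colon X_i \to X_j$ into a witness of $\neg A \lew^X A$. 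You instead reapply the already-proven first part to the dual family $(\widecheck{\nsdclass}_i)_{i \in I}$ with complete sets $X_i \setminus A_i$, obtaining that $\wadgeclass{\neg A}_X$ is the smallest Wadge class containing all $\widecheck{\nsdclass}_i(X)$, and conclude from $\widecheck{\nsdclass}_i(X) \subseteq \nsdclass_j(X) \subseteq \nsdclass$. This is sound: it only needs the (immediate) facts that relativization is monotone with respect to inclusion of pointclasses in $\baire$ and that the complement of a complete set for $\nsdclass_i(X_i)$ is complete for its dual $\widecheck{\nsdclass}_i(X_i)$. Your version is slightly more economical, avoiding the second retraction computation; the paper's version is more explicit about where the reducing functions come from. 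Either is acceptable.
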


The class \( \nsdclass=\wadgeclass{A}_X \) from the lemma is called the \emph{supremum} of the  $\nsdclass_i$'s in $X$.

\begin{proof}
For each \( i \in I \) pick a retraction \( \rho_i \colon X \to X_i \), so that
\( \nsdclass_i(X) = \wadgeclass{\rho_i^{-1}(A_i)}_X \) by \cref{cor:relativization}. Since the identity function witnesses \( A_i \lew^{X_i,X} A \) and \(\rho_i^{-1}(A_i) \eqw^{X,X_i} A_i \), we get \( \nsdclass_i(X) \subseteq \wadgeclass{A}_X =  \nsdclass \). Moreover, if \( B \subseteq X \) is such that \( \wadgeclass{B}_X \supseteq \nsdclass_i(X) \), then \( \rho_i^{-1}(A_i) \lew^X B \), and hence \( A_i \lew^{X_i,X} B \) via some \( f_i \colon X_i \to X \). It follows that \( \bigcup_{i \in I } f_i \) witnesses \( A \lew^X B \), and thus \( \nsdclass \subseteq \wadgeclass{B}_X \).

For the second part, notice that we can assume without loss of generality that \( X \) is closed in \( \baire \), so that the \( X_i \)'s are closed in \( \baire \) as well. Let \( r_i \colon \baire \to X_i \) be retractions. If \( i,j \in I \) are as in the statement, then using \cref{cor:relativization} again we get
\[ 
X_i \setminus A_i \eqw^{X_i,\baire} {\baire \setminus r_i^{-1}(A_i)} \lew^{\baire} r_{j}^{-1}(A_{j}) \eqw^{\baire,X_{j}} A_{j},
 \] 
where the middle reduction exists because \( \baire \setminus r_i^{-1}(A_i) \in \widecheck{\nsdclass}_i \subseteq \nsdclass_{j} \). If \( f_i \colon X_i \to X_{j} \) witnesses \(  X_i \setminus A_i \lew^{X_i,X_{j}} A_{j} \), then \( \bigcup_{i \in I} f_i \) witnesses \( \neg A \lew^X  A \), as desired.
\end{proof}

The typical situations in which we will apply the ``moreover'' part of \cref{calculus_sup} are when 
$I = 2$ and $\nsdclass_0 = \widecheck{\nsdclass}_1$, or $I=\omega$ and $\nsdclass_i \subsetneq \nsdclass_{i+1}$ (in the second case use Wadge's Lemma to obtain \( \widecheck{\nsdclass}_i \subseteq \nsdclass_{i+1} \)). Notice also that if $X$ is uncountable, then by \cref{relativization}\ref{relativization-3} the hypothesis $\nsdclass_i(X) \in \nsdc{X_i}$ follows from the more manageable \( \nsdclass_i \in \nsdc{\baire} \).

\subsection{Compact rank} \label{subsec:compactrank}

An important dividing line in our analysis is whether the given Polish space \(X \) has a compact perfect kernel or not.
If \( X \) has a compact perfect kernel, then its iterated Cantor-Bendixson derivatives are eventually compact.
We use this idea to define a new ordinal invariant, called compact rank.

\begin{defin}
Let \( X \) be a Polish space such that \( \kerp{X} \) is compact. Then the \emph{compact rank} of \( X \) is 
\[
\rkcomp{X} = \min \{\alpha \leq \rank{X}_{\cb} \mid \derrkcb{\alpha}{X} \text{ is compact}\}.
\]
\end{defin}

The assumption on \( \kerp{X} \) ensures that \( \rkcomp{X} \) is well-defined. Moreover, \( \rkcomp{X} = 0 \) if and only if the whole \( X \)  is compact.

There is a natural characterization of when a space with compact perfect kernel is not compact itself which will be useful later on. It
 is based on the following well-known exercise.%
 \footnote{The nontrivial direction follows from the fact that if \( \{ U_n \mid n \in \omega \} \) is a countable clopen cover of the space which does not have a finite subcover, then setting \( V_n = U_n \setminus \bigcup_{j < n} U_j \) we get that \( \{ V_n \mid n \in \omega \wedge V_n \neq \emptyset \} \) is an infinite clopen partition of the space.}

\begin{fact}\label{partition_clopen_zero_dimensional} 
A zero-dimensional second-countable space is not compact if and only if it admits an infinite (countable) clopen partition.
\end{fact}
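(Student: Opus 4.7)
The forward direction is immediate: if $\{V_n \mid n \in \omega\}$ is an infinite clopen partition of a space $X$ (with all $V_n$ nonempty), then it is in particular an open cover of $X$; any finite subfamily misses at least one $V_n$, which is nonempty, and therefore fails to cover $X$. Hence $X$ is not compact. The plan is thus to focus on the nontrivial reverse implication.

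For the reverse direction, suppose $X$ is zero-dimensional, second-countable, and not compact. First I would note that zero-dimensionality together with second-countability yields a \emph{countable} basis $\mathcal{B}$ consisting of clopen sets: any countable basis $\mathcal{B}_0$ can be refined by intersecting each $B \in \mathcal{B}_0$ with clopen neighborhoods of its points (or, more directly, one just picks a countable subfamily of the clopen basis guaranteed by zero-dimensionality using second-countability). In particular, $\mathcal{B}$ itself is a countable clopen cover of $X$. By non-compactness, no finite subfamily of $\mathcal{B}$ covers $X$, so enumerating $\mathcal{B} = \{U_n \mid n \in \omega\}$ we obtain a countable clopen cover with no finite subcover.

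Now define $V_n = U_n \setminus \bigcup_{j < n} U_j = U_n \cap \bigcap_{j < n}(X \setminus U_j)$. Since each $U_j$ is clopen, so is $X \setminus U_j$, and hence $V_n$ is a finite intersection of clopen sets, therefore clopen. The $V_n$'s are pairwise disjoint by construction, and $\bigcup_{n \in \omega} V_n = \bigcup_{n \in \omega} U_n = X$. Finally, the set $\{n \in \omega \mid V_n \neq \emptyset\}$ must be infinite, for if it were finite, say contained in $\{0, \dots, N\}$, then $\{U_0, \dots, U_N\}$ would cover $X$, contradicting our choice. Discarding the empty $V_n$'s (and reindexing if one prefers) yields the desired infinite clopen partition of $X$.

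The only potential pitfall is making sure that zero-dimensionality combined with second-countability really delivers a countable clopen basis rather than merely an arbitrary clopen basis. This is routine: given any countable basis, each of its members is a union of clopen sets from the (a priori uncountable) clopen basis, but second-countability allows one to extract a countable subfamily of clopens that still forms a basis. With this small point handled, the disjointification argument is entirely elementary.
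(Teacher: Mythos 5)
Your proof is correct and follows essentially the same route as the paper, whose footnoted argument uses exactly the disjointification $V_n = U_n \setminus \bigcup_{j<n} U_j$ applied to a countable clopen cover with no finite subcover. Your additional care in extracting a countable clopen basis from zero-dimensionality plus second-countability is a routine point the paper leaves implicit.
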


\begin{lemma}\label{infinite_discrete_clopen_in_nocompact}
Suppose that $X$ is a  zero-dimensional Polish space with $\kerp{X}$ compact.
Then $X$ is not compact if and only if there exists an infinite discrete clopen set  $U \subseteq X$.
\end{lemma}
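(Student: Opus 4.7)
The plan is to prove the two directions separately. The backward direction is immediate: if \(U \subseteq X\) is clopen, infinite, and discrete, then \(U\) is a closed subspace of \(X\) which is itself an infinite discrete space, hence not compact; therefore \(X\) cannot be compact either, since closed subspaces of compact spaces are compact. Note that this direction does not even need the hypothesis on \(\kerp{X}\).

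For the forward direction, I would exploit \cref{partition_clopen_zero_dimensional} to reduce the problem to picking one isolated point from each piece of a suitable clopen partition. First, since \(X\) is zero-dimensional, second-countable, and not compact, \cref{partition_clopen_zero_dimensional} yields an infinite clopen partition \((U_n)_{n \in \omega}\) of \(X\). Then I would observe that, since \(\kerp{X}\) is compact and the \(U_n\)'s form a pairwise disjoint clopen cover of \(\kerp{X}\), only finitely many \(U_n\) can meet \(\kerp{X}\); by grouping these finitely many pieces into a single clopen set and reindexing, I may assume that \(U_n \cap \kerp{X} = \emptyset\) for all \(n \geq 1\).

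Next, for each \(n \geq 1\), the clopen set \(U_n\) is a nonempty Polish space disjoint from \(\kerp{X}\), hence scattered (its own perfect kernel is empty, since \(U_n\) is open in \(X\)). Every nonempty scattered space has an isolated point, so I can pick \(x_n \in U_n\) isolated in \(U_n\); since \(U_n\) is open in \(X\), the singleton \(\{x_n\}\) is in fact open in \(X\). Set \(U = \{x_n \mid n \geq 1\}\). By construction \(U\) is infinite and discrete (every point is isolated in \(X\)), and it is open as a union of open singletons.

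The last thing to check is that \(U\) is closed in \(X\); this is the one step that requires a brief verification, though I do not anticipate real difficulty. For any \(y \in X \setminus U\), either \(y \in U_0\), in which case \(U_0\) itself is an open neighborhood of \(y\) disjoint from \(\bigcup_{n \geq 1} U_n \supseteq U\); or \(y \in U_n\) for some \(n \geq 1\) with \(y \neq x_n\), in which case \(U_n \setminus \{x_n\}\) is open in \(X\), contains \(y\), and meets \(U\) only possibly at \(x_n\), which has been removed. Hence \(X \setminus U\) is open and \(U\) is clopen, completing the proof.
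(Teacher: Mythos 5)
Your proof is correct and follows essentially the same route as the paper's: reduce via \cref{partition_clopen_zero_dimensional} to an infinite clopen partition, use compactness of $\kerp{X}$ to discard the finitely many pieces meeting the kernel, and pick one isolated point from each remaining piece. You simply spell out two steps the paper leaves implicit (why each kernel-avoiding piece has an isolated point, and why the resulting set of chosen points is closed), which is fine.
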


\begin{proof}
	If $U$ is infinite discrete then it is not compact, and if it is closed in \( X \) then \( X \) cannot be compact either.
	
	Conversely, suppose that $X$ is not compact. By \cref{partition_clopen_zero_dimensional}, there exists an infinite clopen partition $(C_n)_{n \in \omega}$ of $X$.
	Since $\kerp{Z}$ is compact, there exists $m \in \omega$ such that $\kerp{Z}\subseteq\bigcup_{n\leq m}C_n$.
	For each $n>m$ there is at least one isolated point \( x_n \in C_n \) because \( C_n \cap \kerp{X} = \emptyset \): then $U = \{x_n \mid n > m\}$ is as required.
\end{proof}

It might be interesting to notice that the compact rank can be obtained through a corresponding derivative \( D_{\comp}(F) \) defined by 
\begin{equation*}
	D_{\comp}(F) = F \setminus \bigcup\{U \mid U\text{ is infinite  discrete clopen in }F\}.
\end{equation*}
(With this terminology, \cref{infinite_discrete_clopen_in_nocompact} can be reformulated as: If \( \kerp{X} \) is compact, then $X$ is compact  if and only if $D_{\comp}(X)=X$.) One can then prove that if $X$ is a non-compact zero-dimensional Polish space with $\kerp{X}$ compact, then $D_{\cb}(X) = D_{\comp}(X)$, and thus \( \rkcomp{X} \) is the rank associated to the derivative \( D_{\comp} \), i.e.\
\(
\rkcomp{X} = \min \{\alpha < \omega_1 \mid \dercomp{\alpha}{X}  = \dercomp{\alpha+1}{X} \}.
\)

We will also need the following technical lemma.

\begin{lemma}\label{sequence_clopen_derivativeto_alpha} 
Suppose that $X$ is a non-compact zero-dimensional Polish space with $\kerp{X}$ compact. 	
	Then for all $\alpha < \rkcomp{X}$, there exists an infinite clopen partition $(C_n)_{n \in \omega}$ of $X$ such that
	$C_0 \supseteq \derrkcb{\alpha+1}{X}$ and $\rkcb{C_{n+1}}=\alpha+1$ for all $n\in\omega$. 
\end{lemma}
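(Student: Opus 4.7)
The plan is to construct the required partition by placing small clopen neighborhoods around points of CB-rank exactly \( \alpha \), arranging them into a locally finite pairwise-disjoint family, and letting \( C_0 \) be the clopen complement of their union. The space \( \derrkcb{\alpha}{X} \) is a closed subspace of \( X \), hence zero-dimensional Polish, and has the same (compact) perfect kernel \( \kerp{X} \); the hypothesis \( \alpha < \rkcomp{X} \) forces it to be non-compact. Applying \cref{infinite_discrete_clopen_in_nocompact} to \( \derrkcb{\alpha}{X} \) yields an infinite set \( U = \{ x_n \mid n \in \omega \} \) of points isolated in \( \derrkcb{\alpha}{X} \) (hence of CB-rank exactly \( \alpha \) in \( X \)), with \( U \) clopen in \( \derrkcb{\alpha}{X} \) and therefore closed and discrete in \( X \).

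Next, fix a compatible metric \( d \) on \( X \). For each \( n \) the set \( F_n := (U \setminus \{ x_n \}) \cup \derrkcb{\alpha+1}{X} \) is closed in \( X \) and does not contain \( x_n \), so \( d(x_n, F_n) > 0 \); by zero-dimensionality I can pick a clopen \( W_n \subseteq X \) with \( x_n \in W_n \subseteq B(x_n, r_n) \), where \( r_n := \min \{ 2^{-n}, \frac{1}{3} d(x_n, F_n) \} \). Then \( W_n \) is a clopen neighborhood of \( x_n \) disjoint from \( \derrkcb{\alpha+1}{X} \) and meeting \( U \) only at \( x_n \); since \( W_n \) is clopen, meets \( \derrkcb{\alpha}{X} \) at \( x_n \), and is disjoint from \( \derrkcb{\alpha+1}{X} \supseteq \kerp{X} \), \cref{clopen_rkgb_no_change} gives \( \rkcb{W_n} = \alpha+1 \).

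The main obstacle is showing that the family \( (W_n)_{n \in \omega} \) is pairwise disjoint and locally finite in \( X \), which guarantees that \( \bigcup_n W_n \) is closed. Pairwise disjointness follows from the bounds \( r_n, r_m \leq \frac{1}{3} d(x_n, x_m) \) (since \( x_m \in F_n \) and \( x_n \in F_m \)): an intersection point \( z \in W_n \cap W_m \) would give \( d(x_n, x_m) \leq d(x_n, z) + d(z, x_m) < r_n + r_m \leq \frac{2}{3} d(x_n, x_m) \), impossible. For local finiteness, the same bound makes \( B(x_m, r_m) \) a neighborhood of any \( y = x_m \in U \) meeting only \( W_m \); while for \( y \notin U \) the positive distance \( \delta := d(y, U) \) yields a neighborhood \( B(y, \delta/3) \) which meets \( W_n \) only when \( r_n > 2\delta/3 \), and this occurs for finitely many \( n \) since \( r_n \to 0 \). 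Consequently \( \bigcup_n W_n \) is both open and closed, so setting \( C_0 := X \setminus \bigcup_n W_n \) and \( C_{n+1} := W_n \) produces the required infinite clopen partition: \( C_0 \) is clopen and contains \( \derrkcb{\alpha+1}{X} \) (as no \( W_n \) meets it), each \( C_{n+1} \) has CB-rank \( \alpha+1 \), and the partition is genuinely infinite because each \( C_{n+1} \) contains the point \( x_n \).
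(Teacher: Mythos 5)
Your proof is correct. Both you and the paper start the same way: since \( \alpha < \rkcomp{X} \), the closed subspace \( \derrkcb{\alpha}{X} \) is non-compact with compact perfect kernel, so \cref{infinite_discrete_clopen_in_nocompact} yields an infinite discrete clopen \( U = \{x_n \mid n\in\omega\} \subseteq \derrkcb{\alpha}{X} \), disjoint from \( \derrkcb{\alpha+1}{X} \); and both conclude via \cref{clopen_rkgb_no_change}. Where you diverge is the mechanism for turning this into a clopen partition of the ambient space \( X \): the paper chooses open sets \( U_n \subseteq X \) tracing the clopen partition \( \{\derrkcb{\alpha}{X}\setminus U\}\cup\{\{x_n\}\}_n \) of \( \derrkcb{\alpha}{X} \) and invokes the generalized reduction property for \( \boldsymbol{\Sigma}^0_1 \) to shrink them to a clopen partition \( (C_n)_{n\in\omega} \) of \( X \), whereas you build the pieces \( W_n \) by hand as small clopen metric neighborhoods of the \( x_n \) and verify pairwise disjointness and local finiteness so that \( \bigcup_n W_n \) is clopen. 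Your route is more elementary and self-contained (no appeal to reduction), at the cost of the explicit \( \tfrac13 \)-distance bookkeeping; the paper's is shorter but leans on a standard citation. All your estimates check out: \( F_n \) is closed because \( U \) is closed and discrete, \( W_n\cap\derrkcb{\alpha+1}{X}=\emptyset \) together with \( x_n\in W_n\cap\derrkcb{\alpha}{X} \) gives \( \rkcb{W_n}=\alpha+1 \), and the local finiteness argument (using \( r_n\to 0 \)) is sound. The only cosmetic point, shared with the paper's own proof, is that \( C_0 \) could in principle be empty; this is harmless for the way the lemma is used.
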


\begin{proof}
Since $\alpha < \rkcomp{X} \leq \rkcb{X}$ we have
	$\derrkcb{\alpha+1}{X} \subsetneq  \derrkcb{\alpha}{X}$ and the latter is not compact. By \cref{infinite_discrete_clopen_in_nocompact}  there is an infinite discrete clopen (relatively to \( \derrkcb{\alpha}{X} \)) set
	$U = \{ x_i \mid i \in \omega \} \subseteq \derrkcb{\alpha}{X}$, so that \( U \cap \derrkcb{\alpha+1}{X} = \emptyset \). 
Let \( (U_n)_{n \in \omega} \) be open sets of \( X \) such that \( U_0  \cap \derrkcb{\alpha}{X} = 	\derrkcb{\alpha}{X} \setminus U \) and \( U_{i+1} \cap \derrkcb{\alpha}{X} = \{ x_i \} \). Without loss of generality, we may assume that \( \bigcup_{n \in \omega} U_n = X \) (otherwise we replace \( U_0 \) with \( U_0 \cup (X \setminus \derrkcb{\alpha}{X} \)). Using the generalized reduction property (\cite[Theorem 22.16]{kechris}) we obtain a clopen partition \( (C_n)_{n \in \omega} \) of \( X \) such that \( C_n \subseteq U_n \), so that in particular \( C_n \cap \derrkcb{\alpha}{X} = U_n \cap \derrkcb{\alpha}{X} \)
because \( (U_n \cap \derrkcb{\alpha}{X})_{n \in \omega} \) is a clopen partition of \( \derrkcb{\alpha}{X} \). Since \( U \cap \derrkcb{\alpha+1}{X} = \emptyset \), by \cref{clopen_rkgb_no_change} the clopen partition $(C_n)_{n \in \omega}$ of \( X \) is as desired.
\end{proof}

\subsection{Minimal countable spaces}

We define (up to homeomorphism) a canonical sequence \( (K_{\alpha+1})_{\alpha < \omega_1} \) of countable compact metrizable spaces such that each \( K_{\alpha+1} \) has CB-type \( (\alpha+1,1) \) (thus it is simple) and embeds into every Polish space of CB-rank at least \( \alpha +1 \).

\begin{defin}\label{definitionKalpha}
	Set $K_0=\emptyset$ and inductively define the space $K_{\alpha+1}$ for all $\alpha< \omega_1$ as follows:
	\begin{itemizenew}
	\item 
	$K_{\alpha+1} = \ptglsq{K_{\alpha}}$, when $\alpha = 0$ or \(\alpha\) is a successor ordinal;
	 \item 
	 $K_{\alpha+1} = \ptglsq{(K_{\alpha_n+1})_{n \in \omega}}$ for some $(\alpha_n)_{n \in \omega}$ cofinal in $\alpha$, when \(\alpha\) is limit.
	\end{itemizenew}
\end{defin} 
Note that $K_1=\set{\inftycostbaire{0}}$. By induction on $\alpha<\omega_1$,
one easily sees that \( K_{\alpha+1} \) is compact and $\cbtype(K_{\alpha+1})=(\alpha+1,1)$.
As a consequence, since the CB-type is a complete invariant for compact countable Polish spaces the definition of $K_{\alpha+1}$
when $\alpha$ is limit does not depend on the choice of the cofinal sequence $(\alpha_n)_{n\in\omega}$, so the sequence is well-defined up to homeomorphism.%
\footnote{Recall that in models of \( \AD \), it is not possible to define an \( \omega_1 \)-sequence of distinct  compact sets because, by the Perfect Set Property (see e.g. \cite[Theorem 33.3]{JechSetTheory}), no uncountable subset of a standard Borel space can be well-orderable: this is why we insist here that the definition is given only up to homeomorphism. What we will use is just the existence, for any fixed \(\alpha < \omega_1 \), of a countable compact space \( K_{\alpha+1} \) with CB-type \( (\alpha+1,1) \) and the way it can be constructed from analogous spaces of lower CB-rank, but its actual presentation as a subspace of \( \baire \) will be totally irrelevant.}

\begin{proposition}\label{Kalpha_embed_countable}
	For every ordinal $\alpha < \omega_1$ and every Polish space $X$ with $\rkcb{X} \geq \alpha + 1$
	there exists an embedding $i \colon K_{\alpha + 1} \to X \setminus \kerp{X}$ with compact (hence closed) range.
\end{proposition}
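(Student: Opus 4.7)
The plan is to argue by induction on $\alpha < \omega_1$, exploiting the recursive construction of $K_{\alpha+1}$ as a pointed gluing. The base case $\alpha = 0$ is immediate: $K_1$ is a singleton, and since $\rkcb{X} \geq 1$ the space $X$ has at least one isolated point, which lies in $X \setminus \kerp{X}$ and is the image of the required embedding.

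For the inductive step, write $K_{\alpha+1} = \ptglsq{(L_n)_{n \in \omega}}$, where $L_n = K_{\beta+1}$ for all $n$ if $\alpha = \beta + 1$, and $L_n = K_{\alpha_n+1}$ for some increasing $(\alpha_n)_{n \in \omega}$ cofinal in $\alpha$ if $\alpha$ is limit; in either case set $\gamma_n < \alpha$ so that $L_n = K_{\gamma_n+1}$. Since $\rkcb{X} \geq \alpha + 1$ I would pick $x \in \derrkcb{\alpha}{X} \setminus \derrkcb{\alpha+1}{X}$, which will play the role of $\inftycostbaire{0} \in K_{\alpha+1}$: this point has CB-rank exactly $\alpha$ in $X$, hence $x \in X \setminus \kerp{X}$, and being isolated in $\derrkcb{\alpha}{X}$ it admits a clopen neighborhood $U$ in $X$ with $U \cap \derrkcb{\alpha}{X} = \{x\}$. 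Because $x \in \derrkcb{\gamma_n + 1}{X} = D_{\cb}(\derrkcb{\gamma_n}{X})$ for every $n$ (this uses $\gamma_n < \alpha$ and the fact that $\alpha$ is either $\gamma_n + 1$ or limit above $\gamma_n$), one can choose a sequence $(x_n)_{n\in\omega}$ in $\derrkcb{\gamma_n}{X} \cap (U \setminus \{x\})$ with $x_n \to x$.

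Next, I would select pairwise disjoint clopen neighborhoods $V_n \subseteq U \setminus \{x\}$ of the $x_n$'s, arranged so that any open neighborhood of $x$ contains all but finitely many $V_n$'s; this is routine in a zero-dimensional Polish space using that $x_n \to x$. Since $\kerp{X} \subseteq \derrkcb{\alpha}{X}$ and $(U \setminus \{x\}) \cap \derrkcb{\alpha}{X} = \emptyset$, each $V_n$ is disjoint from $\kerp{X}$, so in particular $\kerp{V_n} = \emptyset$. Applying \cref{clopen_rkgb_no_change} to $V_n$ together with $x_n \in V_n \cap \derrkcb{\gamma_n}{X}$ yields $\rkcb{V_n} \geq \gamma_n + 1$, so the inductive hypothesis supplies an embedding $i_n \colon K_{\gamma_n+1} \to V_n$ with compact range. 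Setting $i(\inftycostbaire{0}) = x$ and $i((0)^n \conc (1) \conc y) = i_n(y)$ for $y \in L_n = K_{\gamma_n+1}$ gives the desired map: injectivity follows from pairwise disjointness of the $V_n$'s and $x \notin \bigcup_n V_n$; continuity at all points $(0)^n \conc (1) \conc y$ is inherited from $i_n$; continuity at $\inftycostbaire{0}$ follows from $V_n \to x$; the range is contained in $X \setminus \kerp{X}$ by construction; and compactness of the range is automatic, since $K_{\alpha+1}$ is compact.

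The main delicate point I anticipate is the simultaneous selection of the $V_n$'s so that they are pairwise disjoint, converge to $x$ in the required sense, and each contains a point of the correct derivative $\derrkcb{\gamma_n}{X}$. The key leverage here is \cref{clopen_rkgb_no_change}, which converts the localization of $x$ inside $U$ (where all higher derivatives have been stripped away) into exact control of $\rkcb{V_n}$ without having to extract the $V_n$'s as simple clopen pieces. Everything else — injectivity, continuity, and the compactness and non-meeting of $\kerp{X}$ by the range — follows by straightforward bookkeeping from the pointed gluing definition of $K_{\alpha+1}$.
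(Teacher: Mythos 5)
Your proof is correct and follows essentially the same strategy as the paper's: induction on $\alpha$, choosing a point $x \in \derrkcb{\alpha}{X} \setminus \derrkcb{\alpha+1}{X}$ to serve as the image of $\inftycostbaire{0}$, surrounding it with pairwise disjoint clopen sets converging to $x$ whose CB-ranks are controlled via \cref{clopen_rkgb_no_change}, and gluing the continuous injections supplied by the inductive hypothesis (with compactness of $K_{\alpha+1}$ upgrading the continuous injection to an embedding with compact range). The only, immaterial, difference is in how the clopen pieces of prescribed rank are produced: the paper takes the annuli $V_n \setminus V_{n+1}$ of a decreasing clopen basis at $x$ and extracts a subsequence using $\limsup_n \rkcb{V_n \setminus V_{n+1}} = \alpha$, whereas you first pick witness points $x_n \in \derrkcb{\gamma_n}{X}$ converging to $x$ and then separate them by clopen sets.
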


\begin{proof}
Since \( K_{\alpha+1} \) is compact, it is enough to prove by  induction on \(\alpha\) that there is a continuous injection $i \colon K_{\alpha + 1} \to X \setminus \kerp{X}$. 

If \( \alpha = 0 \) then \( K_1 = \{ \inftycostbaire{0} \} \) and \( X \) has at least one isolated point \( x \): setting \( i(\inftycostbaire{0}) = x \) we are done.

Assume now that \( \alpha > 0 \). Since \( \rkcb{X} \geq \alpha+1 \), then \( \derrkcb{\alpha}{X} \setminus \derrkcb{\alpha+1}{X} \neq \emptyset \). Pick any \( x \in \derrkcb{\alpha}{X} \setminus \derrkcb{\alpha+1}{X} \) 	and fix any \( \subseteq \)-decreasing clopen basis $(V_n)_{n\in\omega}$ of neighborhoods of $x$ with 
	\( V_0 \cap \derrkcb{\alpha}{X} = \{ x \} \) (hence also \( V_0 \cap \derrkcb{\alpha+1}{X} = \emptyset \)). Set \( U_n = V_n \setminus V_{n+1} \), so that each \( U_n \) is a countable clopen set, \( U_n \to x \), \( \rkcb{U_n} \leq \alpha \) because \( U_n \cap \derrkcb{\alpha}{X} = \emptyset \),  and \( \limsup_{n \in \omega} \rkcb{U_n} = \alpha \)%
\footnote{That is, \( \forall \beta < \alpha \forall n\in \omega \exists m \geq n \, (\rkcb{U_n} > \beta )\).}
 because every clopen \( x \in V \subseteq V_0 \) is such that \( \rkcb{V} = \alpha+1 \). (Here we repeatedly use \cref{clopen_rkgb_no_change}.) 
Let \( (\alpha_n)_{n \in \omega} \) be the constant sequence with value \( \alpha \) if the latter is a successor ordinal, and a sequence of successor ordinals cofinal in \(\alpha\) otherwise. By \( \limsup_{n \in \omega} \rkcb{U_n} = \alpha \),  there is an injective \( f \colon \omega \to \omega \) such that \( \rkcb{U_{f(n)}} \geq \alpha_n \) for all \( n \in \omega \), so that by inductive hypothesis there is a continuous injection \( i_n \colon K_{\alpha_n} \to U_{f(n)} \).
Then the map \( i \colon K_{\alpha+1} \to V_0 \subseteq X \setminus\kerp{X} \) defined by
\[ 
i(y) = 
\begin{cases}
x & \text{if } y = \inftycostbaire{0} \\
i_n(z) & \text{if } y = (0)^n {}^\smallfrown{} (1) {}^\smallfrown{} z \text{ with } z \in K_{\alpha_n}
\end{cases}
 \] 
is the desired continuous injection.
\end{proof}

\subsection{More on the difference hierarchy}

Section~\ref{subsec:diffhierarchy} and
\cref{relativization} yields  a description of the first \( \omega_1 \)-many nonselfdual Wadge classes in \( \wadge{X} \) when \( X \) is uncountable. (Here we are not requiring \( \AD \) because Borel determinacy suffices.)

\begin{lemma} \label{lem:diffclassesinuncountable}
Let \( X \) be an uncountable zero-dimensional Polish space and \( \alpha < \omega_1 \). Then the \( \alpha \)-th nonselfdual ordinal in \( X \) is precisely the Wadge rank of  \( \differenceopenX{\alpha}{X} \).
\end{lemma}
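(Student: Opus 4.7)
The plan is to prove the lemma by transferring the classical description of the first $\omega_1$-many nonselfdual Wadge classes in $\baire$ to the space $X$ via the relativization isomorphism from \cref{relativization}\ref{relativization-3}. Wadge's original result, recalled in Section~\ref{subsec:diffhierarchy}, asserts that the first $\omega_1$-many nonselfdual Wadge classes of $\baire$ (ordered by $\subseteq$ and counting from $0$) are exactly the classes $\differenceopen{\alpha}$ and their duals $\widecheck{\differenceopen{\alpha}}$ for $\alpha < \omega_1$, with $\differenceopen{\alpha}$ occupying the $\alpha$-th nonselfdual position; equivalently, $\rank{\differenceopen{\alpha}}_{\baire}$ is the $\alpha$-th nonselfdual ordinal of $\baire$. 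This is the black box we start from.

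The next step is to apply \cref{relativization}\ref{relativization-3}: since $X$ is uncountable, the map $\nsdclass \mapsto \nsdclass(X)$ is an order-isomorphism $\langle \nsdc{\baire}, {\subseteq} \rangle \cong \langle \nsdc{X}, {\subseteq} \rangle$. Because this map also commutes with duality (as noted right after \cref{def:relativization}, the dual in $X$ of $\nsdclass(X)$ is $\widecheck{\nsdclass}(X)$), it preserves the pairing of a nonselfdual class with its dual, and hence sends the $\alpha$-th nonselfdual Wadge class of $\baire$ to the $\alpha$-th nonselfdual Wadge class of $X$. Consequently, the $\alpha$-th nonselfdual Wadge class of $X$ is $\differenceopen{\alpha}(X)$, which coincides with $\differenceopenX{\alpha}{X}$ by the identity $\differenceopen{\alpha}(X) = \differenceopenX{\alpha}{X}$ already recorded in Section~\ref{subsec:diffhierarchy}.

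Unwinding \cref{def:sdordinal}, the $\alpha$-th nonselfdual ordinal of $X$ is by definition the Wadge rank (in $X$) of the $\alpha$-th nonselfdual Wadge class of $X$, and the previous paragraph identifies this class with $\differenceopenX{\alpha}{X}$; this is precisely the claim. The main conceptual input is the relativization isomorphism from \cref{relativization}, which is already available, together with Wadge's classical description of the difference classes in $\baire$; the rest of the argument is straightforward bookkeeping, and there is no substantial obstacle to overcome.
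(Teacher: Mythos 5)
Your argument is correct and is exactly the one the paper intends: the lemma is stated there without a written-out proof, being justified only by the remark that it follows from combining Wadge's description of the difference classes in \( \baire \) (Section~\ref{subsec:diffhierarchy}) with the relativization isomorphism of \cref{relativization}\ref{relativization-3}, which is precisely the transfer you carry out. Your additional observation that the isomorphism commutes with duality, and hence matches up nonselfdual pairs level by level, is the right bookkeeping step to make this explicit.
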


Moving to countable Polish spaces \( X \), we instead observe that the difference hierarchy trivializes at least from \( \rkcb{X} \) on. Recall that we only consider nonempty spaces \( X \).

\begin{proposition}\label{countable_space_each_set_is_difference}
	Let $X$ be a  
	countable Polish space. If $\rkcb{X} \le \alpha$ then $\pow(X) = \differenceopenX{\alpha}{X}$, and thus \( \pow(X) = \differenceopenX{\alpha}{X} \cap \differenceopenXcheck{\alpha}{X} \).
		Moreover, if $X$ is simple and $\rkcb{X}=\beta+1$, then $\pow(X) = \differenceopenX{\beta}{X}\cup\differenceopenXcheck{\beta}{X}$.
\end{proposition}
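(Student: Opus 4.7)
I will establish $\pow(X) \subseteq \differenceopenX{\rkcb{X}}{X}$ by transfinite induction on $\beta := \rkcb{X}$. The first conclusion $\pow(X) = \differenceopenX{\alpha}{X}$ then follows from the standard monotonicity $\differenceopenX{\beta}{X} \subseteq \differenceopenX{\alpha}{X}$ for $\beta \le \alpha$, the intersection form $\pow(X) = \differenceopenX{\alpha}{X} \cap \differenceopenXcheck{\alpha}{X}$ is automatic since $\pow(X)$ is closed under complements, and the \emph{moreover} clause will emerge as a key sub-lemma about simple spaces. The base case $\beta = 0$ is vacuous since $X = \emptyset$.

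\textbf{Simple lemma (which yields ``moreover'').} Granting the main claim at rank $\beta$, suppose $X$ is simple with $\rkcb{X} = \beta+1$, so $\derrkcb{\beta}{X} = \{x^*\}$. For any $A \subseteq X$, exactly one of $A, \neg A$ misses $x^*$ and is therefore contained in the open set $U := X \setminus \{x^*\}$. By \cref{clopen_rkgb_no_change}, applicable since $X$ is countable and hence $\kerp{X} = \emptyset$, we have $\rkcb{U} \le \beta$; the inductive hypothesis then places this set in $\differenceopenX{\beta}{U}$, which coincides with $\differenceopenX{\beta}{X}$ restricted to subsets of $U$ because $U$ is open in $X$ (so the $U$-open subsets of $U$ are exactly the $X$-open subsets of $U$). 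The symmetric subcase puts the other set in $\differenceopenXcheck{\beta}{X}$. In total $\pow(X) \subseteq \differenceopenX{\beta}{X} \cup \differenceopenXcheck{\beta}{X}$, which is the \emph{moreover} statement, and a fortiori $\pow(X) \subseteq \differenceopenX{\beta+1}{X}$ by the classical $\differenceopenX{\beta}{X} \cup \differenceopenXcheck{\beta}{X} \subseteq \differenceopenX{\beta+1}{X}$.

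\textbf{Inductive step and gluing.} For the inductive step at $\beta > 0$, suppose $\rkcb{X} = \beta$ (if $\rkcb{X} < \beta$ the IH already gives what we need, via monotonicity). Apply \cref{decomposition_in_simple_clopen} to write $X = \bigsqcup_n C_n$ with each $C_n$ simple: when $\beta$ is a successor $\beta' + 1$, all $\rkcb{C_n} = \beta$ and the Simple lemma gives $\pow(C_n) \subseteq \differenceopenX{\beta}{C_n}$; when $\beta$ is a limit, each $\rkcb{C_n} < \beta$ and the IH combined with monotonicity of the difference hierarchy yields the same conclusion. Choose for every $n$ a representation $A \cap C_n = \differenceseq{\beta}{(B^n_\eta)_{\eta<\beta}}$ with $B^n_\eta$ open in $C_n$ and hence (as $C_n$ is open in $X$) open in $X$; then $B_\eta := \bigcup_n B^n_\eta$ is open in $X$, and by pairwise disjointness of the $C_n$ the first index $\eta$ at which a point $x$ enters $(B_\eta)_{\eta<\beta}$ is computed entirely inside the unique $C_n$ containing $x$, so $A = \differenceseq{\beta}{(B_\eta)_{\eta<\beta}} \in \differenceopenX{\beta}{X}$. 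The main bookkeeping obstacle is aligning all representations to use a common index $\beta$: in the limit case this requires the standard padding $\differenceopenX{\gamma}{C_n} \cup \differenceopenXcheck{\gamma}{C_n} \subseteq \differenceopenX{\beta}{C_n}$ for $\gamma < \beta$, but this is entirely classical.
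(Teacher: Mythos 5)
Your proof is correct, and it follows the same overall strategy as the paper: induction on \( \rkcb{X} \), peeling off the top Cantor--Bendixson derivative (which is where the ``moreover'' clause comes from), and gluing along a clopen partition at limit stages. The one organizational difference is in the successor step. The paper works directly with \( Y = X \setminus \derrkcb{\beta}{X} \) for \( \rkcb{X} = \beta+1 \): it applies the inductive hypothesis to \( Y \), uses that \( \derrkcb{\beta}{X} \) is discrete (not necessarily a singleton) to find an open \( U_\beta \) with \( A = U_\beta \setminus A' \), and thereby exhibits a length-\( (\beta+1) \) difference explicitly, with the simple case falling out as a special case (\( A \subseteq Y \) or \( A = X \setminus A' \)). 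You instead first invoke \cref{decomposition_in_simple_clopen} to reduce to simple clopen pieces, prove the singleton-top-point case there, and then pass back to the general successor case via the classical inclusion \( \differenceopenX{\beta}{X} \cup \differenceopenXcheck{\beta}{X} \subseteq \differenceopenX{\beta+1}{X} \) plus disjoint gluing; this costs you one extra appeal to a standard fact about the difference hierarchy and an extra gluing in the successor case (where the paper only glues at limits, using the generalized reduction property rather than \cref{decomposition_in_simple_clopen}), but it buys a cleaner derivation in which the ``moreover'' clause is the central lemma rather than a side remark. There is no circularity: your Simple lemma at rank \( \beta'+1 \) only consumes the inductive hypothesis at rank \( \le \beta' \), exactly as needed in the step for \( \rkcb{X} = \beta'+1 \).
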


\begin{proof}
Since \( \differenceopenX{\gamma}{X} \subseteq \differenceopenX{\gamma'}{X} \) if \( \gamma \leq \gamma' \), it is enough  to consider the case \( \rkcb{X} = \alpha \) and prove by induction that $\pow(X) = \differenceopenX{\alpha}{X}$.
	Note that \( 1 \leq \alpha < \omega_1 \) and $\derrkcb{\alpha}{X}=\emptyset$ because \( X \neq \emptyset \) is countable.

	If $\alpha = 1$ then $X$ is discrete, so  $\pow(X) = \boldsymbol{\Sigma}^0_1(X) =  \differenceopenX{1}{X}$.
	If moreover $X$ is simple, then \( X \) is a singleton and $\pow(X) = \set{\emptyset,X}= \differenceopenX{0}{X}\cup\differenceopenXcheck{0}{X}$.
	
	Suppose now that $\alpha = \beta + 1$ for some $1 \leq \beta < \omega_1$, and consider an arbitrary  $A\subseteq X$.
	Let  $Y =X \setminus \derrkcb{\beta}{X}$ and $A'=Y \setminus A$.
	Since $\rkcb{Y} = \beta$ by~\cref{clopen_rkgb_no_change}, we get $A' \in \differenceopenX{\beta}{Y}\subseteq \differenceopenX{\beta}{X}$ by inductive hypothesis and the fact that
 $Y$ is open in $X$, so we can write \( A' = \differenceseq{\beta}{(U_\gamma)_{\gamma < \beta}}\) with \( U_\gamma \in \boldsymbol{\Sigma}^0_1(X) \). Moreover,  
	$\derrkcb{\beta}{X}$ is  discrete  because $\rkcb{X} = \beta + 1$, therefore $A \cap \derrkcb{\beta}{X}$ is open in $\derrkcb{\beta}{X}$: 
let $U \in \boldsymbol{\Sigma}^0_1(X)$ be such that  $U \cap \derrkcb{\beta}{X} = A \cap \derrkcb{\beta}{X}$, so that the open subset \( U_\beta = U \cup Y \) of \( X \) satisfies $A = U_\beta \setminus A'$: then \( A = \differenceseq{\beta+1}{(U_\gamma)_{\gamma < \beta+1}} \), 
witnessing
	 $A \in \differenceopenX{\alpha}{X}$.
	If moreover $X$ is simple, then $\derrkcb{\beta}{X}=\set{x}$ is a singleton. 
	If \( x \notin A \), then \( A \subseteq Y \) and using the induction hypothesis we again obtain \( A \in \differenceopenX{\beta}{Y}\subseteq \differenceopenX{\beta}{X} \). If instead \( x \in A \), then \( A = X \setminus A' \), hence \( A \in \differenceopenXcheck{\beta}{X}\). In both cases, \( A \in 
	\differenceopenX{\beta}{X}\cup\differenceopenXcheck{\beta}{X} \).
		
	Finally, suppose that $\alpha$ is limit, and take a strictly increasing sequence $(\alpha_n)_{n \in \omega}$ of nonzero ordinals cofinal in $\alpha$.
	For $n\in\omega$, consider the open sets $V_n^* = X \setminus \derrkcb{\alpha_n}{X}$.
	Apply the generalized reduction property to the sequence $(V_n^*)_{n \in \omega}$ to
	obtain a clopen partition $(V_n)_{n \in \omega}$ of \( X \) such that $V_n \subseteq V_n^*$.
	Given any $A\subseteq X$, set $A_n = A \cap V_n$. Then $A_n \subseteq V_n$ and $\rkcb{V_n} \le \alpha_n$ by~\cref{clopen_rkgb_no_change},
	so by induction hypothesis $A_n \in \differenceopenX{\alpha_n}{V_n} \subseteq \differenceopenX{\alpha_n}{X}\subseteq \differenceopenX{\alpha}{X}$. Write each \( A_n \) as \( A_n = \differenceseq{\alpha}{(U^n_\beta)_{\beta<\alpha}} \) with \( U_\beta^n \in \boldsymbol{\Sigma}^0_1(X) \). Then  $ A = \bigcup_{n \in \omega} (A_n \cap V_n) = \differenceseq{\alpha}{(\bigcup_{n \in \omega} (U^n_\beta \cap V_n))_{\beta < \alpha}}$, hence 
	$A \in \differenceopenX{\alpha}{X}$. 
\end{proof}

Notice that if \( \nsdclass \in \nsdc{X} \) and \( \nsdclass \subseteq \boldsymbol{\Delta}^0_2(X) \) for a given zero-dimensional Polish space \( X \), then	 
 $\nsdclass = \differenceopenX{\beta}{X}$ or $\nsdclass = \differenceopenXcheck{\beta}{X}$ for some \( \beta < \omega_1 \). (Recall that by convention $\differenceopenX{0}{X}=\set{\emptyset}$.)
Indeed, if \( \rho \colon \baire \to X \) is any retraction then \( \rho^{-1}(\nsdclass) \subseteq \boldsymbol{\Delta}^0_2(\baire) \) is nonselfdual, and thus  $\rho^{-1}(\nsdclass) = \differenceopenX{\beta}{\baire}$ or $\rho^{-1}(\nsdclass) = \differenceopenXcheck{\beta}{\baire}$ for some \( \beta < \omega_1 \) by  Wadge's analysis of the first \( \omega_1 \)-many nonselfdual pairs in \( \wadge{\baire} \): since \( \nsdclass = (\rho^{-1}(\nsdclass))(X) \) by \cref{lem:relativization}, we are done. This reproves  \cite[Theorem 11.2]{articolo_raphael}, and since \( \pow(X) \) is selfdual then  \cref{countable_space_each_set_is_difference} sharpens it by providing an upper bound on the possible $\beta$'s when \( X \) is countable.

\begin{corollary}\label{identification_delta02_nsdclass_differences}
	Let $X$ be a countable Polish space and $\nsdclass \in \nsdc{X}$. Then $\nsdclass = \differenceopenX{\beta}{X}$ or $\nsdclass = \differenceopenXcheck{\beta}{X}$ \emph{for some $\beta < \rkcb{X}$}.
\end{corollary}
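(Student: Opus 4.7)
The plan is to obtain the statement in two stages: first, recover the version with the weaker bound $\beta < \omega_1$ from the preceding discussion, and then sharpen it to $\beta < \rkcb{X}$ using the countability of $X$.

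For the first stage, since $X$ is countable, \cref{countable_space_each_set_is_difference} applied with $\alpha = \rkcb{X}$ gives $\pow(X) = \differenceopenX{\rkcb{X}}{X} \subseteq \boldsymbol{\Delta}^0_2(X)$, so in particular $\nsdclass \subseteq \boldsymbol{\Delta}^0_2(X)$. This is exactly the hypothesis under which the argument laid out in the paragraph preceding the corollary applies: picking any retraction $\rho\colon \baire \to X$ (available because $X$ embeds as a closed subspace of $\baire$), the pullback $\rho^{-1}(\nsdclass)$ is a nonselfdual Wadge class contained in $\boldsymbol{\Delta}^0_2(\baire)$, hence by Wadge's classification of the first $\omega_1$-many nonselfdual pairs in $\wadge{\baire}$ it equals $\differenceopenX{\beta}{\baire}$ or $\differenceopenXcheck{\beta}{\baire}$ for some $\beta < \omega_1$. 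Applying \cref{lem:relativization} and the identity $\differenceopen{\beta}(X) = \differenceopenX{\beta}{X}$ then transfers this to $\nsdclass = \differenceopenX{\beta}{X}$ or $\nsdclass = \differenceopenXcheck{\beta}{X}$.

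For the sharpening, I would argue by contradiction. Suppose $\beta \geq \rkcb{X}$. By monotonicity of the difference hierarchy we have $\differenceopenX{\beta}{X} \supseteq \differenceopenX{\rkcb{X}}{X}$, and by \cref{countable_space_each_set_is_difference} the right-hand side is $\pow(X)$; hence $\differenceopenX{\beta}{X} = \pow(X)$, and dually $\differenceopenXcheck{\beta}{X} = \pow(X)$. In either case $\nsdclass = \pow(X)$, but $\pow(X)$ is trivially closed under complements and therefore selfdual, contradicting the assumption $\nsdclass \in \nsdc{X}$. So $\beta < \rkcb{X}$.

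There is no serious obstacle here: the heavy lifting is already packaged in \cref{countable_space_each_set_is_difference} and in the relativization machinery of \cref{lem:relativization}. The sharpening is essentially a counting observation, namely that the difference hierarchy saturates $\pow(X)$ at level $\rkcb{X}$, so no nonselfdual Wadge class can occur at or beyond that level. The only minor point to record is the monotonicity $\differenceopenX{\gamma}{X} \subseteq \differenceopenX{\gamma'}{X}$ for $\gamma \leq \gamma'$, which is standard and is implicitly used already in the proof of \cref{countable_space_each_set_is_difference}.
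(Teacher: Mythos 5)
Your proof is correct and follows essentially the same route as the paper: the paper obtains the \( \beta < \omega_1 \) version in the paragraph preceding the corollary via exactly the same retraction-plus-relativization argument (pull back along \( \rho \colon \baire \to X \), invoke Wadge's analysis of the first \( \omega_1 \)-many nonselfdual pairs in \( \wadge{\baire} \), and recover \( \nsdclass \) via \cref{lem:relativization}), and then sharpens the bound to \( \beta < \rkcb{X} \) just as you do, by noting that \( \differenceopenX{\rkcb{X}}{X} = \pow(X) \) is selfdual by \cref{countable_space_each_set_is_difference}. The only cosmetic difference is that the paper gets \( \nsdclass \subseteq \boldsymbol{\Delta}^0_2(X) \) directly from the fact that every subset of a countable Polish space is \( \boldsymbol{\Delta}^0_2 \), rather than routing through \( \pow(X) = \differenceopenX{\rkcb{X}}{X} \).
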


We are now going to show that such bound is optimal.

\begin{defin}
	Given $\alpha<\omega_1$,  set 
	\[ 
A_\alpha  =\set{x\in K_{\alpha+1}}[\rkcbx{x}{K_{\alpha+1}}\mbox{ has  parity opposite to }\alpha].
 \] 
 \end{defin}
	
\begin{proposition}\label{Difference_hierarchy_Kalpha}
For all $\alpha<\omega_1$, $A_\alpha\in \differenceopenX{\alpha}{K_{\alpha+1}}\setminus\differenceopenXcheck{\alpha}{K_{\alpha+1}}$.
\end{proposition}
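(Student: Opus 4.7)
I would proceed by transfinite induction on $\alpha<\omega_1$, proving both membership and non-membership simultaneously. The base case $\alpha=0$ is immediate: $K_1$ is a singleton, so $A_0=\emptyset$ lies in $\differenceopenX{0}{K_1}=\{\emptyset\}$ but not in $\differenceopenXcheck{0}{K_1}=\{K_1\}$. For the upper bound in the inductive step (which in fact can be handled uniformly for all $\alpha\geq 1$), consider the open sets $U_\gamma:=K_{\alpha+1}\setminus\derrkcb{\gamma+1}{K_{\alpha+1}}$ for $\gamma<\alpha$, which form an increasing sequence since the CB-derivatives are closed and decreasing in the index. A point $x\in K_{\alpha+1}$ lies in $U_\gamma$ iff $\rkcbx{x}{K_{\alpha+1}}\leq\gamma$; in particular $\inftycostbaire{0}$ (of CB-rank $\alpha$) lies in no $U_\gamma$, and directly unfolding the definition of $\Diff_\alpha$ yields $A_\alpha=\differenceseq{\alpha}{(U_\gamma)_{\gamma<\alpha}}\in\differenceopenX{\alpha}{K_{\alpha+1}}$.

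For the lower bound, suppose toward a contradiction that $\neg A_\alpha=\differenceseq{\alpha}{(V_\gamma)_{\gamma<\alpha}}$ for some increasing sequence of open sets (which we may assume without loss of generality). Since $\inftycostbaire{0}\in\neg A_\alpha$, there is a minimum $\gamma_0<\alpha$ with $\inftycostbaire{0}\in V_{\gamma_0}$, and this $\gamma_0$ must have parity opposite to $\alpha$. Because $V_{\gamma_0}$ is an open neighborhood of $\inftycostbaire{0}$ and the copies $\tau_n[K_{\alpha_n+1}]$ (with $\tau_n(y):=(0)^n\conc(1)\conc y$) converge to $\inftycostbaire{0}$ by the pointed-gluing construction, one can pick $n$ with $\tau_n[K_{\alpha_n+1}]\subseteq V_{\gamma_0}$; in the limit case, I also require $\alpha_n>\gamma_0$, which is possible by cofinality of $(\alpha_n)$. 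The pullbacks $W_\gamma:=\tau_n^{-1}[V_\gamma]$ form an increasing sequence of open subsets of $K_{\alpha_n+1}$ with $W_\gamma=K_{\alpha_n+1}$ for every $\gamma\geq\gamma_0$.

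Because $(W_\gamma)_{\gamma<\alpha}$ stabilizes at $K_{\alpha_n+1}$ from $\gamma_0$ on and $\gamma_0$ has parity opposite to $\alpha$, a direct parity computation yields the collapsing identity
\[
\differenceseq{\alpha}{(W_\gamma)_{\gamma<\alpha}} \;=\; K_{\alpha_n+1}\setminus\differenceseq{\gamma_0}{(W_\gamma)_{\gamma<\gamma_0}},
\]
so $\differenceseq{\alpha}{(W_\gamma)_{\gamma<\alpha}}\in\differenceopenXcheck{\gamma_0}{K_{\alpha_n+1}}$. On the other hand, using $\rkcbx{\tau_n(y)}{K_{\alpha+1}}=\rkcbx{y}{K_{\alpha_n+1}}$, the pulled-back set $\tau_n^{-1}[\neg A_\alpha\cap\tau_n[K_{\alpha_n+1}]]$ equals $A_{\alpha_n}$ when $\alpha$ and $\alpha_n$ have opposite parities, and equals $\neg A_{\alpha_n}$ when they have the same parity.

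In the successor case $\alpha=\beta+1$ we have $\alpha_n=\beta$ (of opposite parity to $\alpha$) and $\gamma_0\leq\beta$, so $A_\beta\in\differenceopenXcheck{\gamma_0}{K_{\beta+1}}\subseteq\differenceopenXcheck{\beta}{K_{\beta+1}}$, contradicting the inductive hypothesis. In the limit case $\gamma_0<\alpha_n$, and the standard monotonicity of the difference hierarchy gives $\differenceopenX{\gamma_0}{K_{\alpha_n+1}}\cup\differenceopenXcheck{\gamma_0}{K_{\alpha_n+1}}\subseteq\differenceopenXcheck{\alpha_n}{K_{\alpha_n+1}}$, so both parity scenarios yield $A_{\alpha_n}\in\differenceopenXcheck{\alpha_n}{K_{\alpha_n+1}}$, again contradicting the inductive hypothesis. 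The main obstacle is the parity bookkeeping: one must verify that the opposite-parity choice of $\gamma_0$ (forced by $\inftycostbaire{0}\in\neg A_\alpha$) is precisely what produces a complement in the collapsing identity, which is exactly what the inductive hypothesis forbids.
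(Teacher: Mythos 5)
Your argument is correct, but it follows a genuinely different route from the paper's. The paper proves by induction that \( A_\alpha \eqw^{K_{\alpha+1},\baire} B_\alpha \) for a canonical set \( B_\alpha \in \mathbf{D}_\alpha \) that is complete for \( \differenceopen{\alpha} \) in the Baire space, building the two reductions by pointed gluing from the reductions at the previous stage; membership and non-membership then fall out of the classical fact that \( \differenceopen{\alpha} \in \nsdc{\baire} \). You instead stay entirely inside the difference hierarchy of \( K_{\alpha+1} \): the presentation \( A_\alpha = \differenceseq{\alpha}{(U_\gamma)_{\gamma<\alpha}} \) with \( U_\gamma = K_{\alpha+1}\setminus\derrkcb{\gamma+1}{K_{\alpha+1}} \) gives the positive half uniformly and without induction, while for the negative half you restrict a hypothetical presentation of \( \neg A_\alpha \) as \( \differenceseq{\alpha}{(V_\gamma)_{\gamma<\alpha}} \) to a clopen copy of \( K_{\alpha_n+1} \) contained in the first \( V_{\gamma_0} \) that captures \( \inftycostbaire{0} \), where the restricted presentation collapses to level \( \gamma_0 \) and, after the parity analysis identifying the pullback with \( A_{\alpha_n} \) or its complement, contradicts the inductive hypothesis via the standard inclusion \( \differenceopenX{\gamma_0}{Y}\cup\differenceopenXcheck{\gamma_0}{Y}\subseteq\differenceopenXcheck{\alpha_n}{Y} \) for \( \gamma_0<\alpha_n \) (or via \( \gamma_0\leq\beta \) in the successor case). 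I checked the collapsing identity, the parity bookkeeping, and the reduction of the limit case to \( \alpha_n>\gamma_0 \); they are all sound. The paper's route is shorter given the known Wadge-theoretic analysis of \( \differenceopen{\alpha} \) in \( \baire \) and yields the stronger conclusion that \( A_\alpha \) is actually complete for \( \differenceopenX{\alpha}{K_{\alpha+1}} \); yours is self-contained and more elementary, since it never invokes nonselfduality of the difference classes in \( \baire \) and in effect re-proves the relevant instance of it by a direct Hausdorff-hierarchy argument.
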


\begin{proof}
By definition, \( A_0 = \emptyset \) and \( K_{\alpha+1} \setminus A_\alpha = \ptglsq{ A_\beta} \) if \( \alpha = \beta+1 \). Moreover, since when \(\alpha\) is limit the choice of the sequence \( (\alpha_n)_{n \in \omega} \) in the definition of \( K_{\alpha+1} = \ptglsq{(K_{\alpha_n+1})_{n\in \omega}} \) is irrelevant (up to homeomorphism), we can assume that the ordinals \( \alpha_n \) are all odd, so that also in the limit case
\( K_{\alpha+1} \setminus A_\alpha = \ptglsq{(A_{\alpha_n})_{n \in \omega}} \). 

Recall the Wadge degrees \( \mathbf{D}_\alpha \in \wadge{\baire} \) from~\eqref{eq:differencedegrees}. Since \( \differenceopen{\alpha} \in \nsdc{\baire} \), it is enough to prove by induction  that \( A_\alpha \eqw^{K_{\alpha+1}, \baire} B_\alpha \) for some/any \( B_\alpha \in \mathbf{D}_\alpha \). If \( \alpha = 0 \) the result is obvious because \( A_0 = \emptyset = B_0 \). Suppose now that \( \alpha = \beta+1 \) and fix \( B_\beta \in \mathbf{D}_\beta \). Let
 \( f \colon K_{\beta+1} \to \baire \) and \( g \colon \baire \to K_{\beta+1} \) witness \( A_\beta \lew^{K_{\beta+1},\baire} B_\beta \) and \( B_\beta \lew^{\baire,K_{\beta+1}} A_\beta \), respectively. Let \( B_\alpha \in \mathbf{D}_\alpha \) be defined as in~\eqref{eq:differencedegrees}, that is \( B_\alpha = \{ (0)^n \conc (m+1) \conc x \mid n,m \in \omega \wedge x \in \neg B_\beta \} \). Using \( K_{\alpha+1} \setminus A_\alpha = \ptglsq{ A_\beta} \) it is straightforward to see that \( \ptglsq{f} \colon K_{\alpha+1}	 \to \ptglsq{\baire} \subseteq \baire \) witnesses \( A_\alpha \lew^{K_{\alpha+1},\baire} B_\alpha \), and that the map \( \ptglsq{g} \circ \rho \colon \baire \to K_{\alpha+1} \), where \( \rho \colon \baire \to \ptglsq{\baire} \) is the retraction defined by \( \rho(\inftycostbaire{0}) = \inftycostbaire{0} \) and \( \rho( (0)^n \conc (m+1) \conc x) = (0)^n \conc (1) \conc x \), witnesses \( B_\alpha \lew^{\baire,K_{\alpha+1}} A_\alpha \).
 The limit case is similar.
\end{proof}

As a by-product, we obtain a computation of \(\rkcb{X} \) for countable spaces \( X \) in terms of difference classes.

\begin{proposition}\label{CB_and_differences}
	Let $X$ be a countable Polish space. Then
	\[
	\rkcb{X} = \min \{ \alpha<\omega_1 \mid \differenceopenX{\alpha}{X} = \differenceopenXcheck{\alpha}{X} \}.
	\]
\end{proposition}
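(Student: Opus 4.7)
The plan is to prove the two inequalities separately. The inequality
\[
\min \{ \alpha < \omega_1 \mid \differenceopenX{\alpha}{X} = \differenceopenXcheck{\alpha}{X} \} \le \rkcb{X}
\]
is immediate from \cref{countable_space_each_set_is_difference}: setting $\alpha = \rkcb{X}$ gives $\pow(X) = \differenceopenX{\alpha}{X}$, and since $\pow(X)$ is trivially closed under complements this forces $\differenceopenX{\alpha}{X} = \differenceopenXcheck{\alpha}{X}$. The real task is the reverse inequality, namely, to show that for every $\alpha < \rkcb{X}$ there exists $B \in \differenceopenX{\alpha}{X} \setminus \differenceopenXcheck{\alpha}{X}$.

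Fix such an $\alpha$. Since $X$ is countable we have $\kerp{X} = \emptyset$, so $\rkcb{X} \ge \alpha+1$ together with \cref{Kalpha_embed_countable} supplies an embedding $i \colon K_{\alpha+1} \to X$ with closed image $C = i(K_{\alpha+1})$. Realizing $X$ as a closed subspace of $\baire$ and restricting any $\baire$-retraction onto $C$ to $X$, we obtain a continuous retraction $\rho \colon X \to C$. Take the set $A_\alpha$ provided by \cref{Difference_hierarchy_Kalpha}, so that $A_\alpha \in \differenceopenX{\alpha}{K_{\alpha+1}} \setminus \differenceopenXcheck{\alpha}{K_{\alpha+1}}$, and define $B = \rho^{-1}(i(A_\alpha)) \subseteq X$. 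Since $\rho$ is continuous and $\differenceopenX{\alpha}{X}$ is a boldface pointclass, $B$ lies in $\differenceopenX{\alpha}{X}$. To see that $B \notin \differenceopenXcheck{\alpha}{X}$, observe that $\rho|_C = \id_C$ entails $B \cap C = i(A_\alpha)$, and hence $(X \setminus B) \cap C = i(K_{\alpha+1} \setminus A_\alpha)$. Were $X \setminus B$ of the form $D_\alpha((U_\gamma)_{\gamma < \alpha})$ for open $U_\gamma \subseteq X$, intersection with $C$ (which commutes with the Boolean operations defining $D_\alpha$) would give
\[
i(K_{\alpha+1} \setminus A_\alpha) \;=\; D_\alpha((U_\gamma \cap C)_{\gamma < \alpha}) \;\in\; \differenceopenX{\alpha}{C};
\]
transporting through the homeomorphism $i$, this would mean $A_\alpha \in \differenceopenXcheck{\alpha}{K_{\alpha+1}}$, contradicting the choice of $A_\alpha$. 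Hence $B$ is the desired witness, which completes the proof.

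The only step of real content is producing the retraction $\rho \colon X \to C$, and this is standard: the trick of Section~\ref{subsec:retraction} supplies a retraction of $\baire$ onto any nonempty closed subset, and restricting it to $X$ (where it fixes $C$ pointwise) yields $\rho$. Everything else reduces to the elementary stability of the operator $D_\alpha$ under continuous preimages and under intersection with a subspace. No determinacy hypothesis is needed, consistent with the remark in the introduction that for countable spaces Martin's Borel determinacy already suffices.
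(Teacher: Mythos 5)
Your proof is correct and follows essentially the same route as the paper: both reduce the upper bound to \cref{countable_space_each_set_is_difference} and, for $\alpha<\rkcb{X}$, embed $K_{\alpha+1}$ as a closed subset via \cref{Kalpha_embed_countable}, take the witness $A_\alpha$ from \cref{Difference_hierarchy_Kalpha}, and pull it back through a retraction. You merely spell out in more detail the step the paper compresses into ``$A_\alpha \eqw^{K_{\alpha+1},X} \rho^{-1}(A_\alpha)$ preserves membership in (the complement of) the difference classes''.
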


\begin{proof}
By \cref{countable_space_each_set_is_difference} we know that \( \differenceopenX{\alpha}{X} = \pow(X) =  \differenceopenXcheck{\alpha}{X} \) for \( \alpha = \rkcb{X} \), so it is enough to show that \( \differenceopenX{\alpha}{X} \neq \differenceopenXcheck{\alpha}{X} \) for every  $\alpha< \rkcb{X}$.

	Since $\alpha+1\leq\rkcb{X}$, by \cref{Kalpha_embed_countable} we can suppose that $K_{\alpha+1}$ is, up to homeomorphism, a closed subset of $X$.
	By \cref{Difference_hierarchy_Kalpha} we get $A_{\alpha} \in \differenceopenX{\alpha}{K_{\alpha+1}} \setminus
	\differenceopenXcheck{\alpha}{K_{\alpha+1}}$.
	Fix a retraction \( \rho \colon X \to K_{\alpha+1} \) and set \( A'_\alpha = \rho^{-1}(A_\alpha) \): since \( A_\alpha \eqw^{K_{\alpha+1},X} A'_\alpha \), we get
	$A'_{\alpha} \in \differenceopenX{\alpha}{X} \setminus
	\differenceopenXcheck{\alpha}{X}$,
	as desired.
\end{proof}

Combining \cref{identification_delta02_nsdclass_differences} and \cref{CB_and_differences} we get a full description of nonselfdual Wadge classes in countable Polish spaces which nicely complements \cref{relativization}\ref{relativization-3} and \cref{lem:diffclassesinuncountable}.
Given any \( \alpha < \Theta \), we set \( \nsdc{\baire} \restriction \alpha = \{ \nsdclass \in \nsdc{\baire} \mid \rank{\nsdclass}_{\baire} < \alpha \} \).

\begin{corollary} \label{cor:diffclassesinuncountable}
Let \( X \) be a countable Polish space. Then the nonselfdual Wadge pointclasses in \( X \) are precisely \( \differenceopenX{\beta}{X} \) and their duals, for \( \beta < \rkcb{X} \).

In particular, \( \nsdc{X} = \{ \nsdclass(X) \mid \nsdclass \in \nsdc{\baire} \restriction 2 \cdot \rkcb{X} \} \), and
indeed \( \langle \nsdc{\baire} \restriction 2 \cdot \rkcb{X} , {\subseteq} \rangle \) and \( \langle \nsdc{X}, {\subseteq} \rangle \) are isomorphic via the map \( \nsdclass \mapsto  \nsdclass(X) \).
\end{corollary}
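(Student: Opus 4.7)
The plan is to combine \cref{identification_delta02_nsdclass_differences} and \cref{CB_and_differences} with Wadge's classical description of the first $\omega_1$-many nonselfdual levels of $\wadge{\baire}$, together with the uniqueness clause of \cref{relativization}\ref{relativization-2}.

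For the first assertion, \cref{identification_delta02_nsdclass_differences} already provides one direction: every $\nsdclass \in \nsdc{X}$ equals $\differenceopenX{\beta}{X}$ or $\differenceopenXcheck{\beta}{X}$ for some $\beta < \rkcb{X}$. For the converse, fix $\beta < \rkcb{X}$. The class $\differenceopenX{\beta}{X}$ is a boldface pointclass (difference classes are closed under continuous preimages), and it is nonselfdual by \cref{CB_and_differences}, which ensures $\differenceopenX{\beta}{X}\neq\differenceopenXcheck{\beta}{X}$. Picking any $B\in\differenceopenX{\beta}{X}\setminus\differenceopenXcheck{\beta}{X}$ and applying \cref{prop:generalWadgelemma} in the standard way, one upgrades it to a Wadge class: for every $C\in\differenceopenX{\beta}{X}$ the alternative $\neg B\lew^X C$ would contradict $\neg B\notin\differenceopenX{\beta}{X}$ via downward closure of $\differenceopenX{\beta}{X}$, hence $C\lew^X B$ and $\wadgeclass{B}_X = \differenceopenX{\beta}{X}$. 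The dual case is symmetric.

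For the ``in particular'' part, Wadge's computation gives $\rank{\differenceopen{\beta}}_\baire = 2\beta+1$, so
\[
\nsdc{\baire}\restriction 2\cdot\rkcb{X} = \{\differenceopen{\beta},\differenceopenXcheck{\beta}{\baire} \mid \beta<\rkcb{X}\}
\]
(using that $2\beta+1 < 2\cdot\rkcb{X}$ iff $\beta<\rkcb{X}$). Since $\differenceopen{\beta}(X) = \differenceopenX{\beta}{X}$, the map $\nsdclass\mapsto\nsdclass(X)$ sends this collection onto $\nsdc{X}$ by the first assertion, while injectivity follows from the uniqueness clause of \cref{relativization}\ref{relativization-2}.

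For the order-isomorphism, monotonicity of $\nsdclass\mapsto\nsdclass(X)$ is immediate from the definition of relativization. Both $\langle\nsdc{\baire}\restriction 2\cdot\rkcb{X},\subseteq\rangle$ and $\langle\nsdc{X},\subseteq\rangle$ consist of $\rkcb{X}$-many $2$-element antichains $\{\nsdclass_\beta,\widecheck{\nsdclass}_\beta\}$ indexed by $\beta<\rkcb{X}$, with the $\beta$-th antichain strictly below the $\beta'$-th whenever $\beta<\beta'$ (by Wadge's analysis in $\baire$ on one side, and by the first assertion combined with \cref{prop:semiwellordered} on the other). The bijection matches these antichains index-by-index, yielding the order-isomorphism. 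The only real subtlety is upgrading $\differenceopenX{\beta}{X}$ from a boldface pointclass to a Wadge class for all $\beta<\rkcb{X}$; once that is settled, the rest is bookkeeping.
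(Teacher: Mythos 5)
Your proposal is correct and follows essentially the same route the paper intends: \cref{identification_delta02_nsdclass_differences} for one inclusion, \cref{CB_and_differences} for nonselfduality of \( \differenceopenX{\beta}{X} \) when \( \beta < \rkcb{X} \) (upgraded to a Wadge class via Wadge's Lemma, as the paper notes any nonselfdual boldface pointclass is), and Wadge's computation \( \rank{\differenceopen{\beta}}_\baire = 2\beta+1 \) together with the uniqueness clause of \cref{relativization}\ref{relativization-2} for the ``in particular'' part. The paper states the corollary as an immediate combination of those two results, so your write-up simply makes explicit the bookkeeping it leaves implicit.
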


\section{Proof of \texorpdfstring{\cref{thm:maintheorem}}{Main Theorem 1}}

In view of the known facts from Propositions~\ref{prop:minimaldegree}, \ref{prop:semiwellordered}, and~\ref{prop:decompositionofslefdual},
to completely describe \( \wadge{X} \) (up to isomorphism) for a given zero-dimensional Polish space \( X \) we need to
\begin{enumerate-(1)}
\item \label{goal1}
determine which \( 3 \leq \alpha < \Theta_X \)  (if \( \Theta_X > 3 \)) are selfdual and which are not, namely, determine the sets \( \sdordinal{X} \) and \( \nsdordinal{X} \) from Definition~\ref{def:sdordinal};
\item \label{goal2}
determine the value of \( \Theta_X \) when \( X \) is countable.
\end{enumerate-(1)}

The first of these two goals splits into cases, depending on whether \( \alpha \) is successor or not. For successor ordinals  (Section~\ref{subsec:alternatingproperty}), we will show that, as in the case of \( \baire \) and \( \cantor \), nonselfdual pairs and selfdual degrees alternate. This is relatively easy for uncountable spaces, although the proof necessarily differs from the one used in the case of \( \baire \) and \( \cantor \) because we can no longer rely on the combinatorics of the space at hand, while in the countable case we need to perform a deeper analysis of the whole \( \wadge{X} \) and in particular of the maximal class(es).  

For \( \alpha \) limit (Section~\ref{subsec:limitlevels}), we will immediately observe that for all \( X \), if \( \alpha \) has uncountable cofinality then it is nonselfdual. 
The situation for limit ordinals of countable cofinality is  more delicate: they can be selfdual, as it happens in \( \baire \), or not, as in \( \cantor \).
It turns out that for a general zero-dimensional Polish space \( X \) the two behaviours (Baire-like and Cantor-like) can coexist, but in an orderly manner: if we denote by \( \countablecof \) the set of limit ordinals of countable cofinality, then \( \sdordinal{X} \cap \countablecof \) is always an initial segment of \( \Theta_X\cap \countablecof \), which might be empty, have length \( \Theta_X \) (i.e.\ \( \nsdordinal{X} \cap \countablecof = \emptyset \)), or
might be of the form \( \alpha_X \cap \countablecof \) for some  countable ordinal \( \alpha_X > \omega \).  

Finally, the deeper analysis of the (non)selfdual levels when \( X \) is countable will also allow us to compute the value of \( \Theta_X \) (Section~\ref{subsec:length}), thus completing the proof of \cref{thm:maintheorem}.

\subsection{The alternating property} \label{subsec:alternatingproperty}

We say that a zero-dimensional Polish space $X$ satisfies the \emph{alternating property} if for all \( \alpha \geq 1 \) such that $\alpha + 1 < \Theta_X$ we have
\[
\alpha \in \sdordinal{X} \iff \alpha + 1 \in \nsdordinal{X}.
\]
The goal of this subsection is to prove that all zero-dimensional Polish spaces satisfy the alternating property. The following proposition shows that there cannot be two consecutive selfdual Wadge classes. In the case of \( \baire \) and \( \cantor \) this is done constructively: there are explicit procedures to construct, starting from a given selfdual set, a nonselfdual pair immediately after it in the Wadge quasi-order. In the general case, instead, we cannot rely on any specific combinatorial property of the space at hand, thus we have to use a different and non-constructive proof.

\begin{proposition}[\( \AD \)]\label{non_selfdual_class_consecutive}
	Let $X$ be a zero-dimensional Polish space, and $A, A' \in \sds{X}$ be such that $A \slew^X A'$.
	Then there is $A'' \in \nsds{X}$ satisfying $A \slew^X A'' \slew^X A'$.
\end{proposition}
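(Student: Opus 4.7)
The plan is to use the selfdual decomposition from Proposition~\ref{prop:selfdualanalysis} applied to the larger set $A'$, and then pick one of the nonselfdual pieces at the right rank.

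First, I would apply Proposition~\ref{prop:selfdualanalysis} to $A'$, in its strengthened form: since $A' \in \sds{X}$, there exist a clopen partition $(V_n)_{n \in \omega}$ of $X$ and \emph{nonselfdual} sets $A'_n$ with $A'_n \slew^X A'$ such that $A' = \bigcup_{n \in \omega} (A'_n \cap V_n)$ and, crucially,
\[
\wrank{A'}^X = \sup_{n \in \omega} \bigl(\wrank{A'_n}^X + 1\bigr).
\]
This step is available directly and needs no further work.

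Next, since $A \slew^X A'$ we have $\wrank{A}^X < \wrank{A'}^X$, so by the supremum formula above there exists $n \in \omega$ with $\wrank{A}^X \leq \wrank{A'_n}^X$. I would then argue that this inequality is in fact strict. Indeed, $A \in \sds{X}$ while $A'_n \in \nsds{X}$; but by Proposition~\ref{prop:semiwellordered}, each level of $\wadge{X}$ is occupied either by a single selfdual degree or by a nonselfdual pair, and so a selfdual set and a nonselfdual set cannot have the same Wadge rank. Therefore $\wrank{A}^X < \wrank{A'_n}^X$, and combined with $\wrank{A'_n}^X < \wrank{A'}^X$ (which is immediate from the supremum formula), setting $A'' = A'_n$ yields
\[
A \slew^X A'' \slew^X A', \qquad A'' \in \nsds{X},
\]
as required.

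There is essentially no obstacle: the whole argument is a one-line consequence of the fine form of Proposition~\ref{prop:selfdualanalysis} (which in turn was proved via the retraction method and Wadge's tree of selfduality). The only subtlety worth mentioning is that the proof is genuinely non-constructive --- we invoke the \emph{existence} of the nonselfdual decomposition of $A'$ rather than produce $A''$ by an explicit operation on $A$ or $A'$, in contrast to the standard combinatorial constructions available in $\baire$ and $\cantor$. This non-constructive twist is precisely what allows the statement to hold uniformly for every zero-dimensional Polish space.
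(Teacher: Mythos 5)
Your proof is correct and follows essentially the same route as the paper's: both apply Proposition~\ref{prop:selfdualanalysis} to the larger set \(A'\) and extract a nonselfdual piece of the decomposition lying strictly between \(A\) and \(A'\). The only difference is the last step: the paper argues directly that some \(A'_{\bar n}\not\lew^X A\) (otherwise gluing the reductions along the clopen pieces \(V_n\) would give \(A'\lew^X A\)) and then applies Wadge's Lemma together with selfduality of \(A\), whereas you use the rank formula \(\wrank{A'}^X=\sup_{n}(\wrank{A'_n}^X+1)\) plus the observation that a selfdual and a nonselfdual set cannot share a Wadge rank --- both arguments are sound.
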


\begin{proof}
Apply \cref{prop:selfdualanalysis} to \( A' \) to get a clopen partition \( (V_n)_{n \in \omega} \) of \( X \) and nonselfdual sets \( (A_n)_{n \in \omega} \) such that \( A_n \slew^X A' \) and \( A' = \bigcup_{n \in \omega} (A_n \cap V_n ) \). If \( A_n \lew^X A \) for all \( n \in \omega \) via some continuous map \( f_n \colon X \to X \), then \( f = \bigcup_{n \in \omega} (f_n \restriction V_n) \) would witness \( A' \lew^X A \), a contradiction. Since \( A \) is selfdual, this means that by Wadge's Lemma there is \( \bar{n} \in \omega \) such that \( A \slew^X A_{\bar{n}} \). Setting \( A'' = A_{\bar{n}} \) we are done.
\end{proof}

In particular, \cref{non_selfdual_class_consecutive} already yields one implication of the alternating property.

\begin{corollary}[\( \AD \)]\label{after_sd_is_nsd}
Let $X$ be a zero-dimensional  Polish  space and $\alpha \geq 1$ be such that \( \alpha +1  < \Theta_X\). If $\alpha \in \sdordinal{X}$, then $\alpha + 1 \in \nsdordinal{X}$. 
\end{corollary}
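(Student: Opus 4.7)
The plan is to derive this immediately from Proposition~\ref{non_selfdual_class_consecutive} by a short contradiction argument, since that proposition already does all the non-trivial work.

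First, I would argue by contradiction: suppose that $\alpha \in \sdordinal{X}$ and also $\alpha+1 \in \sdordinal{X}$. By the definition of $\sdordinal{X}$ (Definition~\ref{def:sdordinal}) and the fact that $\wadge{X}$ is semi-well-ordered (\cref{prop:semiwellordered}), I can pick selfdual representatives $A, A' \in \sds{X}$ with $\wrank{A}^X = \alpha$ and $\wrank{A'}^X = \alpha+1$. Since ranks are strictly monotone along $\lew^X$, we have $A \slew^X A'$.

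Now I would apply \cref{non_selfdual_class_consecutive} to this pair to obtain a nonselfdual set $A'' \in \nsds{X}$ with $A \slew^X A'' \slew^X A'$. The ordinal $\wrank{A''}^X$ would then satisfy $\alpha = \wrank{A}^X < \wrank{A''}^X < \wrank{A'}^X = \alpha+1$, which is absurd because no ordinal lies strictly between $\alpha$ and $\alpha+1$. Hence $\alpha+1 \in \nsdordinal{X}$.

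There is essentially no obstacle here: the content of the corollary is a direct combinatorial repackaging of \cref{non_selfdual_class_consecutive} in the language of Wadge ranks, together with the fact that ranks behave strictly monotonically on well-founded quasi-orders. The only thing one must be slightly careful about is that the hypothesis $\alpha+1 < \Theta_X$ genuinely ensures the existence of representatives at both levels $\alpha$ and $\alpha+1$; otherwise $A'$ might not exist.
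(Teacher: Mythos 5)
Your proof is correct and follows exactly the route the paper intends: the paper states \cref{after_sd_is_nsd} as an immediate consequence of \cref{non_selfdual_class_consecutive} without further argument, and your contradiction via strict monotonicity of Wadge ranks is precisely the omitted verification.
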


The following corollary will be used in \cref{simple_is_NSD}. To simplify the notation let  \( \differencestaropenX{\beta}{X} = (\differenceopenX{\beta}{X})^* = \differenceopenX{\beta}{X} \cup \differenceopenXcheck{\beta}{X} \) be the coarse Wadge class associated to \( \differenceopenX{\beta}{X} \).

\begin{corollary} \label{cor:lastclass}
Let \( X \) be a countable%
\footnote{Since we are dealing with a countable space, Borel determinacy suffices to apply our previous results without appealing to the full \( \AD \).}
Polish space. Then all sets \( A \subseteq X \) such that \( A \notin \bigcup_{\beta < \rkcb{X}} \differencestaropenX{\beta}{X} \), if there exist any, are selfdual, Wadge equivalent to each other, and \( \pow(X) = \wadgeclass{A}_X \). 
\end{corollary}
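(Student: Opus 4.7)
The plan is to derive the corollary almost directly from the tools already assembled, in particular Corollary~\ref{identification_delta02_nsdclass_differences} (nonselfdual Wadge classes in countable $X$ are exactly $\differenceopenX{\beta}{X}$ and their duals for $\beta<\rkcb{X}$), Proposition~\ref{non_selfdual_class_consecutive} (no two consecutive selfdual classes), and Wadge's Lemma.

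Fix $A \subseteq X$ with $A \notin \bigcup_{\beta<\rkcb{X}} \differencestaropenX{\beta}{X}$. The first step is to show that $A$ is selfdual. If not, the Wadge class $\wadgeclass{A}_X$ is nonselfdual, so by Corollary~\ref{identification_delta02_nsdclass_differences} it equals $\differenceopenX{\beta}{X}$ or $\differenceopenXcheck{\beta}{X}$ for some $\beta<\rkcb{X}$; in either case $A\in\differencestaropenX{\beta}{X}$, contradicting the hypothesis on $A$.

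The core step is showing $\pow(X)=\wadgeclass{A}_X$. Suppose, toward a contradiction, that there exists $B\subseteq X$ with $B\not\lew^X A$. By Wadge's Lemma applied in $X$, $\neg A\lew^X B$, and since $A$ is selfdual we conclude $A\lew^X B$, hence $A\slew^X B$. Now two subcases arise. If $B$ is nonselfdual, then $\wadgeclass{B}_X=\differenceopenX{\beta}{X}$ or $\differenceopenXcheck{\beta}{X}$ for some $\beta<\rkcb{X}$ by Corollary~\ref{identification_delta02_nsdclass_differences}; but $A\lew^X B$ then forces $A\in\wadgeclass{B}_X\subseteq\differencestaropenX{\beta}{X}$, the same contradiction as before. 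If instead $B$ is selfdual, then since $A$ is selfdual and $A\slew^X B$, Proposition~\ref{non_selfdual_class_consecutive} yields a nonselfdual $B'\in\nsds{X}$ with $A\slew^X B'\slew^X B$, and applying the previous subcase to $B'$ in place of $B$ yields again a contradiction. Thus no such $B$ exists, so every subset of $X$ is Wadge reducible to $A$, i.e.\ $\pow(X)=\wadgeclass{A}_X$.

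Finally, if $A'$ is any other set satisfying the hypothesis, then by the previous paragraph applied to both $A$ and $A'$ we have $A'\in\pow(X)=\wadgeclass{A}_X$ and $A\in\pow(X)=\wadgeclass{A'}_X$, so $A\eqw^X A'$, proving Wadge equivalence. There is no real obstacle here: the argument is a short chase through the catalogue of nonselfdual Wadge classes in a countable space combined with the ``no two consecutive selfduals'' principle; the only point to keep in mind is that the selfdual case is handled by one application of Proposition~\ref{non_selfdual_class_consecutive} to reduce back to the nonselfdual subcase.
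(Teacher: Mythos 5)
Your proof is correct and uses essentially the same ingredients as the paper's: the catalogue of nonselfdual Wadge classes in a countable space (\cref{identification_delta02_nsdclass_differences}/\cref{cor:diffclassesinuncountable}) to rule out nonselfdual obstructions, and \cref{non_selfdual_class_consecutive} to interpose a nonselfdual set when the obstruction is selfdual. The only difference is cosmetic: you establish $\pow(X)=\wadgeclass{A}_X$ first and deduce mutual Wadge equivalence from it, whereas the paper proves the equivalence first (via a minimality argument) and then completeness.
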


\begin{proof}
By \cref{cor:diffclassesinuncountable}, all sets \( A \) as in the statement are selfdual. Assume towards a contradiction that there is \( A' \notin \bigcup_{\beta < \rkcb{X}} \differencestaropenX{\beta}{X} \) such that \( A' \not\lew^X A \), so that \( A' \in \sds{X} \) and \( A \slew^X A' \). If \( A' \) is \( \lew^X \)-minimal with such properties, then we get a contradiction with (the restriction to Borel sets of) \cref{non_selfdual_class_consecutive}. Thus \( A \eqw^X A' \) for all \( A,A' \notin \bigcup_{\beta < \rkcb{X}} \differencestaropenX{\beta}{X} \). 

Finally,
fix \( A \) as in the statement and
 consider an arbitrary \( B \subseteq X \). If \( B \in \differencestaropenX{\beta}{X} \) for some \( \beta < \rkcb{X}\), then \( B \slew^X A \) by Wadge's Lemma; otherwise, \( B \eqw^X A \) by the previous paragraph. In all cases \( B \lew^X A \), thus \( \pow(X) = \wadgeclass{A}_X \). 
\end{proof}

\begin{theorem}[\( \AD \)] \label{after_nsd_is_sd}
All uncountable  zero-dimensional Polish spaces satisfy the alternating property.
\end{theorem}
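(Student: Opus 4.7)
My plan is to establish the nontrivial direction of the alternating property (the reverse implication is \cref{after_sd_is_nsd}): assuming $\alpha\in\nsdordinal{X}$ with $\alpha+1<\Theta_X$, I aim to exhibit a selfdual set of Wadge rank exactly $\alpha+1$. The idea is to combine, on a two-piece clopen partition of $X$, a complete set for the nonselfdual class at level $\alpha$ with a complete set for its dual, and then invoke the supremum calculus of \cref{calculus_sup}.

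The first step is to split $X$ into two uncountable clopen pieces. Since $X$ is uncountable, $\kerp{X}$ is a nonempty perfect zero-dimensional Polish space, hence uncountable. Being perfect zero-dimensional with more than one point, it admits a clopen subset $U$ such that both $U$ and $\kerp{X}\setminus U$ are nonempty, and each is then a nonempty perfect Polish space, so uncountable. Using strong zero-dimensionality of $X$, I separate the disjoint closed sets $U$ and $\kerp{X}\setminus U$ by a clopen set of $X$, producing a partition $X=X_0\sqcup X_1$ with $U\subseteq X_0$, $\kerp{X}\setminus U\subseteq X_1$, and both $X_0,X_1$ uncountable zero-dimensional Polish.

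Next, fix $A\subseteq X$ nonselfdual of Wadge rank $\alpha$. By \cref{relativization}\ref{relativization-2} there is a unique $\nsdclass\in\nsdc{\baire}$ with $\wadgeclass{A}_X=\nsdclass(X)$. Applying \cref{relativization}\ref{relativization-3} to the uncountable $X_0$ and $X_1$, both $\nsdclass(X_0)$ and $\widecheck{\nsdclass}(X_1)$ are nonselfdual in their respective spaces, so they admit complete sets $A_0\subseteq X_0$ and $A_1\subseteq X_1$. The hypotheses of \cref{calculus_sup} are then met with $I=\{0,1\}$, $\nsdclass_0=\nsdclass$, and $\nsdclass_1=\widecheck{\nsdclass}$; since the pair $\{\nsdclass_0,\nsdclass_1\}$ is closed under duals, the ``moreover'' clause applies, so $B=A_0\cup A_1$ is selfdual and $\wadgeclass{B}_X$ is the supremum in $\wc{X}$ of $\nsdclass(X)$ and $\widecheck{\nsdclass}(X)$.

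Finally, I verify $\wrank{B}^X=\alpha+1$. Because $\wadgeclass{B}_X$ is selfdual while neither $\nsdclass(X)$ nor $\widecheck{\nsdclass}(X)$ is, both inclusions $\nsdclass(X)\subsetneq\wadgeclass{B}_X$ and $\widecheck{\nsdclass}(X)\subsetneq\wadgeclass{B}_X$ are strict, giving $\wrank{B}^X\ge\alpha+1$. Conversely, pick any $C\subseteq X$ with $\wrank{C}^X=\alpha+1$ (which exists by $\alpha+1<\Theta_X$); Wadge's Lemma applied within $X$ forces $A\lew^X C$ and $\neg A\lew^X C$, whence $\wadgeclass{C}_X\supseteq\nsdclass(X)\cup\widecheck{\nsdclass}(X)$, and minimality of the supremum yields $\wadgeclass{B}_X\subseteq\wadgeclass{C}_X$, so $\wrank{B}^X\le\alpha+1$. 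The main obstacle is producing the clopen partition with both halves uncountable: this is the only place where uncountability of $X$ is essentially used, which is why the countable case must be treated separately.
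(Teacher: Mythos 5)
Your proposal is correct and follows essentially the same route as the paper: split $X$ into two uncountable clopen pieces (using that $\kerp{X}$ is uncountable), relativize $\nsdclass$ and $\widecheck{\nsdclass}$ to the two pieces via \cref{relativization}, and apply the ``moreover'' clause of \cref{calculus_sup} to obtain a selfdual supremum at level $\alpha+1$. The only difference is that you spell out the partition construction and the rank computation $\wrank{B}^X=\alpha+1$, both of which the paper leaves implicit.
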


\begin{proof}
	Let $X$ be an uncountable Polish zero-dimensional space, and consider any $1 \leq \alpha < \Theta_X = \Theta$.
	By \cref{after_sd_is_nsd} we only need to show that if $\alpha \in \nsdordinal{X}$, then $\alpha + 1 \in \sdordinal{X}$. By \cref{relativization}\ref{relativization-2} there is \( \nsdclass \in \nsdc{\baire} \) such that \( \nsdclass(X) \in \nsdc{X} \) and \( \rank{\nsdclass(X)} = \alpha \). Since \( X \) is uncountable then \( \kerp{X} \) is uncountable as well, thus we can partition \( X \) into two uncountable clopen sets \( X_0, X_1 \). By \cref{relativization}\ref{relativization-3} we have \( \nsdclass(X_0) \in \nsdc{X_0} \) and \( \widecheck{\nsdclass}(X_1) \in \nsdc{X_1} \). By \cref{calculus_sup} the smallest Wadge class containing \( \nsdclass(X) \cup \widecheck{\nsdclass}(X) \) is selfdual, i.e.\ \( \alpha+1 \in \sdordinal{X} \).
\end{proof}

Combined with \cref{relativization} and \cref{lem:relativization}, the alternating property implies that the embedding from \( \wadge{X} \) into \( \wadge{\baire} \) induced by any retraction \( \rho \colon \baire \to X \) (Section~\ref{subsec:retraction}) is almost surjective and might miss only limit levels of \( \wadge{X} \).
In particular, except for the countable cofinality case the rank of \( \nsdclass \in \wc{\baire} \) and the rank (in \( X \)) of \( \nsdclass(X) \) are at distance at most $1$.

We now move to countable spaces.

\begin{proposition}\label{simple_is_NSD}
	Let $X$ be a countable Polish space. Then $\pow(X)$ is a Wadge class if and only if $X$ is not simple.
\end{proposition}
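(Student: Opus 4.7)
The plan is to apply Corollary \ref{cor:lastclass}: $\pow(X)$ is a Wadge class if and only if some $A \subseteq X$ lies outside $\bigcup_{\beta < \rkcb{X}} \differencestaropenX{\beta}{X}$, in which case $\pow(X) = \wadgeclass{A}_X$. Thus I need to show that such an $A$ exists precisely when $X$ is not simple.

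For the forward direction (simple $X$ implies $\pow(X)$ is not a Wadge class), I would let $\rkcb{X} = \alpha + 1$ and invoke the moreover clause of Proposition \ref{countable_space_each_set_is_difference} to get $\pow(X) = \differenceopenX{\alpha}{X} \cup \differenceopenXcheck{\alpha}{X}$. If one had $\pow(X) = \wadgeclass{A}_X$, then by Wadge's Lemma $A$ would land, without loss of generality, in $\differenceopenX{\alpha}{X}$; since this is a boldface pointclass, $\wadgeclass{A}_X \subseteq \differenceopenX{\alpha}{X}$, forcing $\differenceopenXcheck{\alpha}{X} \subseteq \differenceopenX{\alpha}{X}$ and hence equality by complementation, contradicting Proposition \ref{CB_and_differences}.

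For the backward direction, I would decompose $X = \bigsqcup_{n \in I} C_n$ into simple clopen pieces using Lemma \ref{decomposition_in_simple_clopen}, then glue together an $A = \bigsqcup_n A_n$ whose restriction to each $C_n$ lives ``high'' in the difference hierarchy of $C_n$. Specifically, when $\rkcb{X}$ is a limit I would take $\rkcb{C_n} = \beta_n + 1$ with $\sup_n \beta_n = \rkcb{X}$ and choose $A_n \in \differenceopenX{\beta_n}{C_n} \setminus \differenceopenXcheck{\beta_n}{C_n}$, while when $\rkcb{X} = \alpha + 1$ is a successor with $\mathrm{Deg}_{\cb}(X) \geq 2$ I would use at least two pieces $C_0, C_1$ and place $A_0$ and $A_1$ on opposite sides of the nonselfdual pair at level $\alpha$, extending the choice arbitrarily on the remaining pieces; such $A_n$'s exist by Proposition \ref{CB_and_differences} applied inside each $C_n$. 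Verifying that $A$ avoids every $\differenceopenX{\beta}{X} \cup \differenceopenXcheck{\beta}{X}$ with $\beta < \rkcb{X}$ then reduces to restricting to a suitable $C_n$.

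The main obstacle will be the limit case, because there the single set $A$ must simultaneously escape all difference classes of bounded rank. The crucial fact that makes this work is Corollary \ref{cor:diffclassesinuncountable} applied inside each $C_n$: all nonselfdual Wadge classes of $C_n$ are difference classes of rank less than $\rkcb{C_n}$, and they are strictly increasing in Wadge rank, so $\differenceopenX{\beta}{C_n} \subseteq \differenceopenX{\beta_n}{C_n} \cap \differenceopenXcheck{\beta_n}{C_n}$ for every $\beta < \beta_n$. This forces the chosen $A_n$ to escape $\differenceopenX{\beta}{C_n}$ (and by symmetry its dual) as soon as $\beta_n > \beta$, which we can always arrange by $\sup_n \beta_n = \rkcb{X}$.
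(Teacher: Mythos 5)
Your proof is correct, and its skeleton matches the paper's: both reduce the statement to \cref{cor:lastclass}, the forward direction uses the ``moreover'' clause of \cref{countable_space_each_set_is_difference} together with the fact that \( \differenceopenX{\alpha}{X} \neq \differenceopenXcheck{\alpha}{X} \) for \( \alpha < \rkcb{X} \), and the backward direction splits into the same two cases (limit CB-rank, and successor CB-rank with CB-degree at least \(2\)) and uses \cref{decomposition_in_simple_clopen} to build a set \( A = \bigcup_n A_n \) spread over a clopen partition into simple pieces. Where you genuinely diverge is in how you certify that \( A \notin \bigcup_{\beta < \rkcb{X}} \differencestaropenX{\beta}{X} \): the paper routes this through \cref{calculus_sup}, i.e.\ the retraction/relativization machinery, realizing \( \wadgeclass{A}_X \) as the supremum of the relativized nonselfdual classes \( \differenceopenX{\beta_n}{X} \) and then arguing at the level of Wadge classes that this supremum strictly contains each \( \differenceopenX{\beta}{X} \); you instead verify the exclusion directly, by restricting to a single clopen piece \( C_n \) with \( \beta_n > \beta \) and using that \( \differenceopenX{\beta}{C_n} \cup \differenceopenXcheck{\beta}{C_n} \subseteq \differenceopenX{\beta_n}{C_n} \cap \differenceopenXcheck{\beta_n}{C_n} \) while \( A_n \in \differenceopenX{\beta_n}{C_n} \setminus \differenceopenXcheck{\beta_n}{C_n} \). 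Your verification is more elementary: it needs only that difference classes are preserved under restriction to clopen subspaces (intersect the witnessing open sets with \( C_n \)), rather than the completeness and supremum calculus; both arguments yield the same set \( A \), and \cref{cor:lastclass} then gives \( \pow(X) = \wadgeclass{A}_X \) in either case.
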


\begin{proof}
Suppose first that $X$ is simple, so that in particular $\rkcb{X}=\alpha+1$ for some \( \alpha < \omega_1 \). By \cref{cor:diffclassesinuncountable} we know that
	$\differenceopenX{\alpha}{X} \neq \differenceopenXcheck{\alpha}{X}$, while the ``moreover'' part of \cref{countable_space_each_set_is_difference} gives $\pow(X)=\differenceopenX{\alpha}{X} \cup \differenceopenXcheck{\alpha}{X}$. It follows that we cannot have \( \pow(X) = \wadgeclass{A}_X \) for \( A \subseteq X \), because if \( B \subseteq X \) is complete for \( \differenceopenX{\alpha}{X} \) then \( B, \neg B \lew^X A \) would imply \( B \slew^X A \) by \( B \not\lew^X \neg B \), a contradiction.
	
	Suppose now that $X$ is not simple.
By \cref{cor:lastclass} it is enough to show that there is some \( A \subseteq X \) such that 
\( A \notin \bigcup_{\beta<\rkcb{X}}\differencestaropenX{\beta}{X} \).
	We distinguish two cases. 
	
	If \( \rkcb{X} = \lambda \) is limit, fix a strictly increasing sequence \( (\alpha_i)_{i \in \omega} \) cofinal in \( \lambda \). By \cref{decomposition_in_simple_clopen} we can find a clopen partition \( (C_n)_{n \in \omega} \) of \( X \) such that \( \rkcb{C_n} < \lambda \) and \( \sup_{n \in \omega} \rkcb{C_n}= \lambda \). Fix an injection \( i \mapsto n_i \) such that \( \rkcb{C_{n_i}} > \alpha_i \), so that \( \differenceopenX{\alpha_i}{C_{n_i}} \in \nsdc{C_{n_i}} \) for all \( i \in \omega \) by \cref{cor:diffclassesinuncountable}. By \cref{calculus_sup} there is a selfdual Wadge class \( \wadgeclass{A}_X \) which contains \( \bigcup_{i \in \omega} \differenceopenX{\alpha_i}{X}\), hence \( A \notin \bigcup_{\beta < \lambda} \differencestaropenX{\beta}{X} \) because all the pointclasses \( \differenceopenX{\beta}{X} \) are distinct and nonselfdual by \cref{cor:diffclassesinuncountable}.
	
	Suppose now that \( \cbtype(X) = (\beta+1, N) \) with \( 1 < N \leq \omega \).
	By \cref{decomposition_in_simple_clopen} again we can find a clopen partition \( (C_n)_{n < N} \) of \( X \) such that \( \rkcb{C_n} = \beta+1 \), so that \( \nsdclass_0(C_0) = \differenceopenX{\beta}{C_0} \in \nsdc{C_0} \) and \( \nsdclass_1(C_1) = \differenceopenXcheck{\beta}{C_1} \in \nsdc{C_1} \) by \cref{cor:diffclassesinuncountable}. (For the remaining \( 1 < n < N \) the choice of the nonselfdual pointclasses \( \nsdclass_n(C_n)  \) is irrelevant.) Then \cref{calculus_sup} allows us to conclude that there is  a selfdual \( A \subseteq X \) with \( \wadgeclass{A}_X \supseteq \differenceopenX{\beta}{X} \cup \differenceopenXcheck{\beta}{X} \), which is again as required by \cref{cor:diffclassesinuncountable}.
\end{proof}

\begin{theorem}\label{alternating_duality_theorem_intro}
	All countable  zero-dimensional Polish spaces satisfy the alternating property.
\end{theorem}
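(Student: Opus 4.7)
The plan is to establish the nontrivial direction of the alternating property for countable \( X \), namely that \( \alpha \in \nsdordinal{X} \) together with \( \alpha + 1 < \Theta_X \) implies \( \alpha + 1 \in \sdordinal{X} \) (the other direction is already covered by \cref{after_sd_is_nsd}). The strategy parallels the uncountable case of \cref{after_nsd_is_sd}: split \( X \) into two clopen pieces on which the nonselfdual pair at rank \( \alpha \) and its dual can separately be realized, and then invoke the ``moreover'' clause of \cref{calculus_sup} to produce a selfdual class at rank \( \alpha + 1 \). The main obstacle, absent in the uncountable case, is that an arbitrary clopen partition of a countable \( X \) may yield pieces of CB-rank too low to host the relevant difference class; the work is entirely in ensuring that a sufficiently rich partition exists.

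By \cref{cor:diffclassesinuncountable}, the nonselfdual Wadge class at rank \( \alpha \) in \( X \) has the form \( \differenceopenX{\beta}{X} \) for a unique \( \beta < \rkcb{X} \) (up to replacing it with its dual). The key observation is that the assumption \( \alpha + 1 < \Theta_X \) forces \( \lvert \derrkcb{\beta}{X} \rvert \geq 2 \). Indeed, if \( \derrkcb{\beta}{X} \) were a singleton then we would be in the case \( \rkcb{X} = \beta + 1 \) with \( X \) simple; but then \cref{countable_space_each_set_is_difference} would give \( \pow(X) = \differenceopenX{\beta}{X} \cup \differenceopenXcheck{\beta}{X} \), and \cref{simple_is_NSD} would tell us that \( \pow(X) \) is not itself a Wadge class, so \( \{ \differenceopenX{\beta}{X}, \differenceopenXcheck{\beta}{X} \} \) would be the topmost nonselfdual pair of \( \wadge{X} \), forcing \( \alpha + 1 = \Theta_X \), a contradiction.

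Using \( \lvert \derrkcb{\beta}{X}\rvert \geq 2 \) and zero-dimensionality, pick a clopen partition \( X = X_0 \sqcup X_1 \) so that each \( X_i \) meets \( \derrkcb{\beta}{X} \). Since \( X \) is countable we have \( \kerp{X} = \emptyset \), so \cref{clopen_rkgb_no_change} applies and yields \( \rkcb{X_i} > \beta \) for \( i = 0, 1 \). A further application of \cref{cor:diffclassesinuncountable} to each \( X_i \) gives \( \differenceopenX{\beta}{X_0} \in \nsdc{X_0} \) and \( \differenceopenXcheck{\beta}{X_1} \in \nsdc{X_1} \). We then apply \cref{calculus_sup} with \( \nsdclass_0 = \differenceopenX{\beta}{\baire} \) and \( \nsdclass_1 = \differenceopenXcheck{\beta}{\baire} \); the hypothesis of the ``moreover'' clause is trivially satisfied because \( \widecheck{\nsdclass}_0 = \nsdclass_1 \). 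This produces a selfdual Wadge class \( \mathbf{\Delta} \) in \( X \), realized as the smallest Wadge class containing both \( \differenceopenX{\beta}{X} \) and \( \differenceopenXcheck{\beta}{X} \).

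It remains to verify that \( \mathbf{\Delta} \) sits at rank exactly \( \alpha+1 \). Being selfdual and properly containing both members of the nonselfdual pair at rank \( \alpha \), its rank is at least \( \alpha+1 \). Conversely, the hypothesis \( \alpha + 1 < \Theta_X \) guarantees the existence of some Wadge class \( \mathbf{\Lambda} \) at rank \( \alpha+1 \); by the linear order of coarse Wadge classes and Wadge's Lemma, \( \mathbf{\Lambda} \) (and its dual, if any) must contain both \( \differenceopenX{\beta}{X} \) and \( \differenceopenXcheck{\beta}{X} \), so minimality of \( \mathbf{\Delta} \) forces \( \mathbf{\Delta} \subseteq \mathbf{\Lambda} \), hence \( \rank{\mathbf{\Delta}} \leq \alpha+1 \). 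Together these give \( \rank{\mathbf{\Delta}} = \alpha+1 \) and therefore \( \alpha+1 \in \sdordinal{X} \), completing the proof.
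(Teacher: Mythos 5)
Your proof is correct and follows essentially the same route as the paper's: reduce to the successor direction via \cref{after_sd_is_nsd}, identify the class at rank \( \alpha \) as \( \differenceopenX{\beta}{X} \) via \cref{cor:diffclassesinuncountable}, split \( X \) into two clopen pieces of CB-rank \( > \beta \), and apply the ``moreover'' clause of \cref{calculus_sup}. The only difference is organizational: by casing on \( \lvert \derrkcb{\beta}{X} \rvert \) rather than on whether \( \beta + 1 < \rkcb{X} \), you make the clopen-splitting argument cover the non-simple case \( \rkcb{X} = \beta+1 \) uniformly, whereas the paper handles that case separately by exhibiting \( \pow(X) \) as the top selfdual Wadge class via \cref{simple_is_NSD} and \cref{cor:lastclass}; both treatments are valid.
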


\begin{proof}
Let $X$ be a countable Polish space.
    By \cref{after_sd_is_nsd}, we only need to prove that if $\alpha\in\nsdordinal{X}$
    and $\alpha+1<\Theta_X$ then $\alpha+1\in\sdordinal{X}$.
    By \cref{cor:diffclassesinuncountable} there exists a $\beta<\rkcb{X}$ such that
    $\rank{\differenceopenX{\beta}{X}}=\alpha$. We distinguish two cases.
    
Assume first that \( \beta + 1 < \rkcb{X} \).    
  Then $\derrkcb{\beta+1}{X} \neq \emptyset$ and $\derrkcb{\beta}{X}$ is infinite, which implies that we can split $X$ in two clopen sets
    $X_0$ and $X_1$ with $\rkcb{X_i} \geq \beta+1$ for $i=0,1$. (For example, pick \( x \in \derrkcb{\beta}{X} \setminus \derrkcb{\beta+1}{X} \) and \( X_0  \)  clopen such that \( X_0 \cap \derrkcb{\beta}{X}  = \{ x \} \): then \( X_0 \) and \( X_1 =X \setminus X_0 \) are as desired.)
    By \cref{cor:diffclassesinuncountable} we have that  $\nsdclass_0(X_0) = \differenceopenX{\beta}{X_0} \in \nsdc{X_0} \) and \( \nsdclass_1(X_1) =  \differenceopenXcheck{\beta}{X_1} \in \nsdc{X_1}$, so by  \cref{calculus_sup} 
    their supremum is selfdual and $\alpha+1\in\sdordinal{X}$.
    
Assume now \( \rkcb{X} = \beta+1 \). If \( X \) were simple, then \( \Theta_X = \rank{\differenceopenX{\beta}{X}} +1 = \alpha+1 \) by  \cref{countable_space_each_set_is_difference}, contradicting the choice of \(\alpha \). Thus \( X \) is not simple and \cref{simple_is_NSD} together with \cref{cor:lastclass} show that at level \( \alpha+ 1 \) we have the selfdual Wadge class \( \pow(X) \), so \( \alpha+1 \in \sdordinal{X} \) again and we are done.
\end{proof}

\subsection{Limit levels} \label{subsec:limitlevels}

We now move to the analysis of limit levels of \( \wadge{X} \). We first settle the uncountable cofinality case.

\begin{proposition}[\( \AD \)]\label{uncountablecof_ordinal_intro}
Let \( X \) be a zero-dimensional Polish space.
If $1 \leq \alpha < \Theta_X$ is a limit ordinal and $\alpha \in \sdordinal{X}$, then \( \cof{\alpha}= \omega \). Thus at limit levels of uncountable cofinality there is always a nonselfdual pair.
\end{proposition}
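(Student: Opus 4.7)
The plan is to use \cref{prop:selfdualanalysis}, which already encodes exactly the decomposition of selfdual sets we need for the cofinality argument.

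Let $A \subseteq X$ be any set with Wadge rank $\wrank{A}^X = \alpha$. Since $\alpha \in \sdordinal{X}$, the set $A$ is selfdual, so I can apply \cref{prop:selfdualanalysis} to obtain a clopen partition $(V_n)_{n \in \omega}$ of $X$ and nonselfdual sets $(A_n)_{n \in \omega}$ with $A_n \slew^X A$ and $A = \bigcup_{n \in \omega}(A_n \cap V_n)$. The crucial ingredient is the ``moreover'' part of that proposition, which gives the exact rank computation
\[
\alpha = \wrank{A}^X = \sup_{n \in \omega} \bigl(\wrank{A_n}^X + 1\bigr).
\]

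Since each $A_n$ satisfies $A_n \slew^X A$, one has $\wrank{A_n}^X + 1 \leq \alpha$, and in fact $< \alpha$ unless $\alpha$ were successor, which is ruled out by assumption. Thus $\alpha$ is expressed as the supremum of a countable sequence of strictly smaller ordinals, forcing $\cof(\alpha) \leq \omega$, and hence $\cof(\alpha) = \omega$ because $\alpha$ is limit. The contrapositive gives that every limit $\alpha < \Theta_X$ of uncountable cofinality lies in $\nsdordinal{X}$, i.e.\ carries a nonselfdual pair.

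There is essentially no obstacle here: all the work has already been done in establishing the decomposition lemma for selfdual sets (\cref{prop:selfdualanalysis}), and the present statement is a direct corollary. The only point to verify carefully is that the supremum in the rank formula is indeed indexed over a countable set and that the summands are strictly below $\alpha$, both of which are immediate from the setup.
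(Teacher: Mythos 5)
Your proof is correct and is essentially identical to the paper's own argument: both apply \cref{prop:selfdualanalysis} to a selfdual set of rank $\alpha$ and read off $\alpha = \sup_{n\in\omega}(\wrank{A_n}^X+1)$ with each $\wrank{A_n}^X+1 < \alpha$, which immediately gives $\cof{\alpha}=\omega$.
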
 

\begin{proof}
Let \( A \in \sds{X} \) be such that \( \wrank{A}^X = \alpha \).	
By \cref{prop:selfdualanalysis},  there are sets \( (A_n)_{n \in \omega} \) such that \( \alpha = \sup_{n \in \omega} (\wrank{A_n}^X+1) \): this already shows that \( \alpha \) has countable cofinality.
\end{proof}

To deal with the countable cofinality case, we need to distinguish whether \( \kerp{X} \) is compact or not. We begin with a technical lemma, which might be of independent interest because it shows that \( \omega \times \cantor \) is minimal for closed embeddability among zero-dimensional Polish spaces with non-compact perfect kernel.

\begin{lemma} \label{prop:wadgeZ_eq_wadgeBaire_onedir}
Let $X$ be a zero-dimensional Polish space. The following are equivalent:
	\begin{enumerate-(i)}
	\item\label{wadgeZ_eq_wadgeBaire_onedir_ker_case} 
	$\kerp{X}$ is not compact;
		\item\label{wadgeZ_eq_wadgeBaire_onedir_partition_case} 
	there exists an infinite (countable) clopen partition \( (C_n)_{n \in \omega} \) of $X$ such that each \( C_n \) is uncountable;
	\item\label{wadgeZ_eq_wadgeBaire_onedir_embedding_case} 
\( X \) contains a closed set \( F \) homeomorphic to \( \omega \times \cantor \).
	\end{enumerate-(i)}
\end{lemma}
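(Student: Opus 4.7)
The plan is to prove the three implications cyclically: \ref{wadgeZ_eq_wadgeBaire_onedir_embedding_case} $\Rightarrow$ \ref{wadgeZ_eq_wadgeBaire_onedir_ker_case} $\Rightarrow$ \ref{wadgeZ_eq_wadgeBaire_onedir_partition_case} $\Rightarrow$ \ref{wadgeZ_eq_wadgeBaire_onedir_embedding_case}. Two of these are essentially soft, and the real work is in extracting the clopen partition of the whole \( X \) from a clopen partition of \( \kerp{X} \).

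For \ref{wadgeZ_eq_wadgeBaire_onedir_embedding_case} $\Rightarrow$ \ref{wadgeZ_eq_wadgeBaire_onedir_ker_case}, if \( F \subseteq X \) is closed and homeomorphic to \( \omega \times \cantor \), then \( F \) is perfect (no isolated points) and closed, hence contained in the largest perfect subspace of \( X \), i.e.\ \( F \subseteq \kerp{X} \). Since \( F \cong \omega \times \cantor \) is not compact and is closed in \( X \), the set \( \kerp{X} \) cannot be compact either. For \ref{wadgeZ_eq_wadgeBaire_onedir_partition_case} $\Rightarrow$ \ref{wadgeZ_eq_wadgeBaire_onedir_embedding_case}, each \( C_n \) of the given partition is an uncountable zero-dimensional Polish space (as a clopen subspace of \( X \)), so by the Cantor–Bendixson theorem it contains a closed homeomorphic copy \( K_n \) of \( \cantor \). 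Then \( F = \bigcup_{n \in \omega} K_n \) is closed in \( X \) (its intersection with each clopen \( C_n \) is closed), each \( K_n \) is clopen in \( F \), and the obvious map makes \( F \) homeomorphic to \( \omega \times \cantor \).

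The main direction is \ref{wadgeZ_eq_wadgeBaire_onedir_ker_case} $\Rightarrow$ \ref{wadgeZ_eq_wadgeBaire_onedir_partition_case}. Assume \( \kerp{X} \) is not compact, hence in particular nonempty. Since \( \kerp{X} \) is a noncompact zero-dimensional second countable space, \cref{partition_clopen_zero_dimensional} yields an infinite clopen partition \( (D_n)_{n \in \omega} \) of \( \kerp{X} \). Each \( D_n \) is nonempty and has no isolated points (being clopen inside the perfect space \( \kerp{X} \)), hence is uncountable. The task is to lift this partition to a clopen partition of \( X \) whose \( n \)-th block contains \( D_n \). For each \( n \), choose an open \( V_n^* \subseteq X \) with \( V_n^* \cap \kerp{X} = D_n \); then set \( V_0 = V_0^* \cup (X \setminus \kerp{X}) \) and \( V_n = V_n^* \) for \( n \geq 1 \), so that \( (V_n)_{n \in \omega} \) is an open cover of \( X \). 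Apply the generalized reduction property (\cite[Theorem 22.16]{kechris}, as used in \cref{sequence_clopen_derivativeto_alpha}) to obtain a clopen partition \( (C_n)_{n \in \omega} \) of \( X \) with \( C_n \subseteq V_n \). A short disjointness argument then shows \( C_n \cap \kerp{X} = D_n \): the inclusion \( \subseteq \) is immediate from \( C_n \subseteq V_n \), and for \( \supseteq \), any \( x \in D_n \) lies in some \( C_m \); if \( m \neq n \) then \( x \in V_m \cap \kerp{X} = D_m \), contradicting the disjointness of the \( D_k \)'s. In particular each \( C_n \supseteq D_n \) is uncountable, as required.

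The only place where one has to be attentive is the lifting step in the last paragraph: one must avoid the temptation to simply pick ``pairwise disjoint'' neighborhoods of the \( D_n \)'s (which is not automatic), and instead invoke generalized reduction in the standard way, taking care to absorb the ``left-over'' set \( X \setminus \kerp{X} \) into one of the \( V_n \)'s so that the cover is honestly a cover of \( X \). Everything else is routine once the partition \( (C_n)_{n \in \omega} \) with \( C_n \supseteq D_n \) is in hand.
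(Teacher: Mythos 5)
Your proof is correct, and its overall skeleton --- the cycle \ref{wadgeZ_eq_wadgeBaire_onedir_embedding_case} \( \Rightarrow \) \ref{wadgeZ_eq_wadgeBaire_onedir_ker_case} \( \Rightarrow \) \ref{wadgeZ_eq_wadgeBaire_onedir_partition_case} \( \Rightarrow \) \ref{wadgeZ_eq_wadgeBaire_onedir_embedding_case}, with the two soft implications handled exactly as in the paper --- coincides with the paper's. The one place where you diverge is the lifting step in \ref{wadgeZ_eq_wadgeBaire_onedir_ker_case} \( \Rightarrow \) \ref{wadgeZ_eq_wadgeBaire_onedir_partition_case}: the paper fixes a retraction \( \rho \colon X \to \kerp{X} \) (which exists because \( \kerp{X} \) is a nonempty closed subset of a zero-dimensional Polish space, a fact used throughout the article) and simply sets \( C_n = \rho^{-1}(D_n) \); preimages of a clopen partition under a continuous map automatically form a clopen partition of \( X \), and \( C_n \supseteq D_n \) because \( \rho \) is the identity on \( \kerp{X} \), so uncountability is immediate. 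Your route via the generalized reduction property is equally valid and is carefully executed --- in particular you correctly absorb \( X \setminus \kerp{X} \) into \( V_0 \) so that the \( V_n \)'s genuinely cover \( X \), and the disjointness argument showing \( C_n \cap \kerp{X} = D_n \) is needed and correct --- but it is somewhat heavier machinery for this step; the retraction trick buys a one-line lift, while your argument has the mild advantage of not invoking the existence of retractions onto closed subsets.
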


\begin{proof}
If \( X \) is countable, then all of \ref{wadgeZ_eq_wadgeBaire_onedir_ker_case}--\ref{wadgeZ_eq_wadgeBaire_onedir_embedding_case} are false, so without loss of generality we may assume that \( X \) is uncountable.

We start with \ref{wadgeZ_eq_wadgeBaire_onedir_ker_case} \( \Rightarrow \) \ref{wadgeZ_eq_wadgeBaire_onedir_partition_case}. Since \( \kerp{X} \) is not compact, by \cref{partition_clopen_zero_dimensional} there is an infinite clopen partition $(C'_n)_{n \in \omega}$ of $\kerp{Z}$, and since \( \kerp{X} \) is perfect each \( C'_n \) is uncountable. Thus it is enough to set \( C_n = \rho^{-1}(C'_n) \) where \( \rho \colon X \to \kerp{X} \) is any retraction.

Assume now that \ref{wadgeZ_eq_wadgeBaire_onedir_partition_case} holds. Since the \( C_n \)'s are uncountable Polish spaces, they contain a closed set \( F_n \subseteq C_n \) homeomorphic to \( \cantor \): then \( F = \bigcup_{n \in \omega} F_n \) is still closed and homeomorphic to \( \omega \times \cantor \), hence \ref{wadgeZ_eq_wadgeBaire_onedir_embedding_case} holds.

Finally, we prove \ref{wadgeZ_eq_wadgeBaire_onedir_embedding_case} \( \Rightarrow \) \ref{wadgeZ_eq_wadgeBaire_onedir_ker_case}. Since \( F \) is perfect, then \( F \subseteq \kerp{X} \). But then \( \kerp{X} \) cannot be compact, otherwise so would be \( F \) and \( \omega \times \cantor \), which is false.
\end{proof}

\begin{theorem}[\( \AD \)]\label{wadgeZ_eq_wadgeBaire_onedir}
	Let $X$ be an uncountable zero-dimensional Polish space, so that \( \Theta_X = \Theta \). Then 
	the following are equivalent:
	\begin{enumerate-(i)}
	\item	\label{wadgeZ_eq_wadgeBaire_onedir-i}
	\( \kerp{X} \) is not compact;
	\item \label{wadgeZ_eq_wadgeBaire_onedir-ii}
\( \alpha \in \sdordinal{X} \) for every limit \( \alpha < \omega_1 \);
	\item \label{wadgeZ_eq_wadgeBaire_onedir-iv}
	if \( \alpha \in \Theta_X \cap \countablecof \), then  \( \alpha \in \sdordinal{X} \).
	\end{enumerate-(i)}
\end{theorem}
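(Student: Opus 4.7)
The plan is to prove the cyclic chain of implications (iv) $\Rightarrow$ (ii) $\Rightarrow$ (i) $\Rightarrow$ (iv). The first implication is immediate: since $X$ is uncountable, \cref{prop:decompositionofslefdual} gives $\Theta_X = \Theta \geq \omega_1$, and every limit $\alpha < \omega_1$ has countable cofinality, so $\alpha \in \Theta_X \cap \countablecof$.

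For (i) $\Rightarrow$ (iv), I will fix $\alpha \in \Theta_X \cap \countablecof$ and use \cref{prop:wadgeZ_eq_wadgeBaire_onedir} (applied to (i)) to secure a countable clopen partition $(C_n)_{n \in \omega}$ of $X$ into uncountable pieces. Using the alternating property (\cref{after_nsd_is_sd}), I will pick nonselfdual ordinals $\alpha_n \in \nsdordinal{X}$ cofinal in $\alpha$ and strictly increasing; by \cref{relativization}\ref{relativization-2} each corresponds to a pointclass $\nsdclass_n \in \nsdc{\baire}$ with $\nsdclass_n \subsetneq \nsdclass_{n+1}$, so by Wadge's Lemma $\widecheck{\nsdclass}_n \subseteq \nsdclass_{n+1}$. \cref{relativization}\ref{relativization-3} applied to each uncountable $C_n$ produces complete sets $A'_n \subseteq C_n$ for $\nsdclass_n(C_n) \in \nsdc{C_n}$, and the ``moreover'' part of \cref{calculus_sup} then gives that $\wadgeclass{A}_X$ with $A = \bigcup_n A'_n$ is the smallest Wadge class in $X$ containing every $\nsdclass_n(X)$ and is selfdual; its rank equals $\sup_n (\alpha_n + 1) = \alpha$, witnessing $\alpha \in \sdordinal{X}$.

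The substantial step is the contrapositive of (ii) $\Rightarrow$ (i): assuming $\kerp{X}$ is compact (and thus homeomorphic to $\cantor$, as $X$ is uncountable), I want to produce a countable limit ordinal that is nonselfdual in $X$. Set $\gamma = \rkcb{X} < \omega_1$ and $\mu = \rank{\differenceopenX{\gamma}{X}}^X < \omega_1$ (well-defined and nonselfdual by \cref{lem:diffclassesinuncountable}), then choose any countable limit $\alpha > \mu$, e.g.\ $\alpha = \mu + \omega$. Suppose toward contradiction that there is a selfdual $A$ of rank $\alpha$ in $X$. By \cref{prop:selfdualanalysis} together with \cref{rmk:sefldualanalysis}\ref{rmk:sefldualanalysis-1}, I obtain a clopen partition $(V_n)_{n \in \omega}$ of $X$ and nonselfdual sets $A_n \slew^X A$ with $A_n \subseteq V_n$ (the case $A_n = X$ being trivial) and $\sup_n(\rank{A_n}^X + 1) = \alpha$. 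Compactness of $\kerp{X}$ forces $\kerp{X} \subseteq V_0 \cup \dots \cup V_K$ for some finite $K$, so for $n > K$ the set $V_n$ is countable and \cref{clopen_rkgb_no_change} yields $\rkcb{V_n} \leq \gamma$; by \cref{countable_space_each_set_is_difference}, $A_n \in \differenceopenX{\gamma}{V_n} \subseteq \differenceopenX{\gamma}{X}$, so $\rank{A_n}^X \leq \mu$. For the finitely many $n \leq K$, the trivial bound $\rank{A_n}^X < \alpha$ applies. Therefore $\sup_n(\rank{A_n}^X + 1) \leq \max(\mu + 1, \max_{n \leq K} \rank{A_n}^X + 1)$, which is strictly below $\alpha$ since $\alpha$ is a limit exceeding both $\mu$ and all the finitely many $\rank{A_n}^X$ with $n \leq K$. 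This contradicts $\sup_n(\rank{A_n}^X + 1) = \alpha$, forcing $\alpha \in \nsdordinal{X}$ and negating (ii).

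The hardest part, and the main obstacle, is precisely this last implication: (ii) is a purely Wadge-theoretic hypothesis at countable limits, while (i) is a topological property of the perfect kernel. The bridge is the observation that compactness of $\kerp{X}$ traps almost every block of any countable clopen partition of $X$ inside a countable subspace of CB-rank at most $\gamma$, capping the Wadge complexity of subsets contained in those blocks at $\mu$; any selfdual set of rank $\alpha > \mu$ would then need its \cref{prop:selfdualanalysis}-decomposition to draw cofinally large ranks from only finitely many ``uncountable'' blocks, which is impossible for a limit ordinal.
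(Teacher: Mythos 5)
Your proposal is correct and follows essentially the same route as the paper: (i) $\Rightarrow$ (iv) via the clopen partition into uncountable pieces from \cref{prop:wadgeZ_eq_wadgeBaire_onedir}, relativization, and the ``moreover'' part of \cref{calculus_sup}; and the contrapositive of (ii) $\Rightarrow$ (i) via the decomposition of \cref{prop:selfdualanalysis}, with compactness of \( \kerp{X} \) trapping all but finitely many blocks in countable pieces of CB-rank at most \( \rkcb{X} \), so that the ranks \( \wrank{A_n}^X \) are eventually bounded and the supremum is attained as a maximum. The only cosmetic difference is that you exhibit a single witnessing limit ordinal \( \mu+\omega \) and phrase the bound via \( \mu = \rank{\differenceopenX{\gamma}{X}} \), whereas the paper shows every limit \( \alpha > \rkcb{X}+\omega \) is nonselfdual using the explicit bound \( \rank{\differenceopenX{\beta}{X}} \leq 2\beta+1 \); both suffice.
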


\begin{proof}
It is enough to prove \ref{wadgeZ_eq_wadgeBaire_onedir-i} \( \Rightarrow \) \ref{wadgeZ_eq_wadgeBaire_onedir-iv} and (the contrapositive of) \ref{wadgeZ_eq_wadgeBaire_onedir-ii} \( \Rightarrow \) \ref{wadgeZ_eq_wadgeBaire_onedir-i}.

Assume first that \( \kerp{X} \) is not compact, so that by \cref{prop:wadgeZ_eq_wadgeBaire_onedir} there is an infinite clopen partition \( (C_n)_{n \in \omega} \) of \( X \) into uncountable pieces. Given any \( \alpha \in \Theta_X \in \countablecof \), pick a strictly increasing sequence of ordinals \( (\alpha_n)_{n \in \omega} \) cofinal in \(\alpha\) such that \( \alpha_n \in \nsdordinal{X} \) for all \( n \in \omega \) (which exists by the alternating property), and using \cref{relativization}\ref{relativization-2} let \( \nsdclass_n \in \nsdc{\baire} \) be such that \( \rank{\nsdclass_n(X)}_X = \alpha_n \). By \cref{relativization}\ref{relativization-3}  and uncountability of \( C_n \), we have \( \nsdclass_n(C_n) \in \nsdc{C_n} \) as well, hence by \cref{calculus_sup} the supremum \( \nsdclass \in \wc{X} \) of the pointlcasses \( \nsdclass_n(X) \) is selfdual, i.e.\ \( \alpha \in \sdordinal{X} \).

Assume now that \( \kerp{X} \) is compact and let \( \beta = \rkcb{X} \). Consider any \( \beta + \omega < \alpha < \omega_1 \): we want to show that if \( \alpha \in \sdordinal{X} \) then \( \alpha \) is a successor ordinal, so that any limit ordinal between \( \beta+\omega \) and \( \omega_1 \) witnesses the failure of \ref{wadgeZ_eq_wadgeBaire_onedir-ii}. Let \( A \subseteq X \) be a selfdual set such that \( \rank{A}_\w^X = \alpha \).
By \cref{prop:selfdualanalysis} there exists a clopen partition $(V_n)_{n \in \omega}$ of \( X \) and sets $(A_n)_{n \in \omega}$
	such that  $A_n \slew^X A = \bigcup_{n \in \omega} (A_n \cap V_n)$ and \( \alpha = \sup_{n \in \omega}(\alpha_n+1) \) for \( \alpha_n= \wrank{A_n}^X \). Recall also from \cref{rmk:sefldualanalysis}\ref{rmk:sefldualanalysis-1} that we can assume that for each \( n \in \omega \) either \( A_n = X \), or else \( A_n \subseteq V_n \). Since \( \kerp{X} \) is compact, there is \( N \in \omega \) such that \( V_n \cap \kerp{X} = \emptyset \) for all \( n \geq N \). Fix  any \( n \geq N \). Then \( \rkcb{V_n} \leq \beta \) by \cref{clopen_rkgb_no_change}, hence if \( A_n \subseteq V_n \) then \( A_n \in \differenceopenX{\beta}{V_n} \subseteq \differenceopenX{\beta}{X} \) by \cref{countable_space_each_set_is_difference}, which means that \( \alpha_n < \beta+\omega \) because by \cref{relativization} and the alternating property \( \rank{\differenceopenX{\beta}{X}}_X \leq \rank{\differenceopenX{\beta}{\baire}}_\baire = 2 \beta +1 \).
 If instead \( A_n = X \), then \( \alpha_n = 1 \). Hence \( \alpha_n + 1 < \beta+\omega \) for all \( n \geq N \). Since \(   \alpha > \beta+\omega \), this means that necessarily \( \alpha = \sup_{n \in \omega} (\alpha_n+1) = \sup_{n < N } (\alpha_n+1) = \max_{n < N} (\alpha_n+1) \), and thus \(\alpha\) is a successor ordinal.
\end{proof}

If \( X \) is countable, so that \( \kerp{X} = \emptyset \) is trivially compact, then \( \Theta_X < \omega_1 < \Theta = \Theta_{\baire} \) (see Section~\ref{subsec:length} for the exact computation of \( \Theta_X \)), and thus \( \wadge{X} \)  cannot be isomorphic to \( \wadge{\baire} \). Combining this with \cref{wadgeZ_eq_wadgeBaire_onedir} and our previous results (and recalling that, up to homeomorphism, any zero-dimensional Polish space can be construed as a closed subspace of \( \baire \)) we get: 

\begin{corollary}[\( \AD \)] \label{cor:wadgeZ_eq_wadgeBaire_onedir} 
If \( X \) is a zero-dimensional Polish space, then \( \wadge{X} \) is order-isomorphic to \( \wadge{\baire} \) if and only if \( \kerp{X} \) is not compact, where (when it exists) the isomorphism is canonically%
\footnote{Since we are working in models of \( \AD \), where the Axiom of Choice \( \AC \) fails, showing that two partial orders have the same description is not enough to conclude that they are actually isomorphic, so it is very important to observe that in our case the isomorphism can be realized in a very canonical and definable way.\label{canonicaliso}}
 induced by  \( A \mapsto \rho^{-1}(A) \) for some/any retraction \( \rho \colon \baire \to X \).
\end{corollary}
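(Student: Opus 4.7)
The plan is to prove the biconditional by contrapositive on each side and then verify that the retraction-induced map realizes the isomorphism canonically.

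For the forward direction, suppose \(\wadge{X}\) and \(\wadge{\baire}\) are order-isomorphic. If \(X\) were countable, every subset of \(X\) would lie in \(\boldsymbol{\Delta}^0_2(X)\), giving \(\Theta_X < \omega_1\); but \(\Theta_{\baire} = \Theta > \omega_1\), so the hierarchies cannot be order-isomorphic. Hence \(X\) is uncountable. If moreover \(\kerp{X}\) were compact, then Theorem~\ref{wadgeZ_eq_wadgeBaire_onedir} (the failure of \ref{wadgeZ_eq_wadgeBaire_onedir-i} forces the failure of \ref{wadgeZ_eq_wadgeBaire_onedir-iv}) would supply a countable cofinality limit ordinal \(\alpha < \Theta_X\) with \(\alpha \in \nsdordinal{X}\); yet in \(\wadge{\baire}\) every countable cofinality limit level is a single selfdual degree (Figure~\ref{fig:baire}). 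Since any order-isomorphism of Wadge hierarchies must preserve, at each level, whether that level is a single selfdual element or a two-element nonselfdual antichain, this is a contradiction.

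For the backward direction, assume \(\kerp{X}\) is non-compact, so \(X\) is uncountable and \(\Theta_X = \Theta = \Theta_{\baire}\) by Proposition~\ref{prop:decompositionofslefdual}. Combining Proposition~\ref{prop:minimaldegree} (minimal nonselfdual pair), Theorem~\ref{after_nsd_is_sd} (alternating property at successor levels), Proposition~\ref{uncountablecof_ordinal_intro} (nonselfdual pairs at uncountable cofinality limits), and Theorem~\ref{wadgeZ_eq_wadgeBaire_onedir} (selfdual degrees at all countable cofinality limits), we see that \(\wadge{X}\) realizes exactly the alternation pattern of \(\wadge{\baire}\) depicted in Figure~\ref{fig:baire}. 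This already yields an abstract order-isomorphism.

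To realize this isomorphism canonically, consider \(F \colon \wadgedegree{A}^X \mapsto \wadgedegree{\rho^{-1}(A)}^{\baire}\) for some retraction \(\rho \colon \baire \to X\). By Section~\ref{subsec:retraction}, \(F\) is a well-defined order-embedding independent of \(\rho\), commuting with complementation, hence preserving selfduality. Surjectivity requires the main work. By Theorem~\ref{relativization}\ref{relativization-3} the relativization \(\nsdclass \mapsto \nsdclass(X)\) is an order-isomorphism between \(\nsdc{\baire}\) and \(\nsdc{X}\), and by Lemma~\ref{lem:relativization} its inverse is exactly the restriction of \(F\) to \(\nsdc{X}\); hence \(F\) is an order-bijection on nonselfdual classes. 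By transfinite induction on rank, using the fact that both hierarchies have identical alternation patterns, \(F\) is rank-preserving: at each successor rank the already-matched nonselfdual pairs immediately above and below pin down the intermediate selfdual class (unique by alternation), and at each countable cofinality limit the selfdual class is characterized as the unique one whose strictly preceding nonselfdual classes are precisely the matched ones of rank \(< \alpha\). Thus \(F\) is surjective, hence the desired canonical isomorphism. The main obstacle is precisely this surjectivity on selfdual classes; I expect it to yield cleanly from the combination of the nonselfdual order-bijection supplied by relativization and the rigidity of the matching alternation patterns.
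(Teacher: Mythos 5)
Your proof is correct and follows essentially the same route the paper intends: the paper presents this corollary as an immediate combination of the length computation for countable spaces, \cref{wadgeZ_eq_wadgeBaire_onedir}, and the retraction/relativization machinery, which is exactly what you assemble (your explicit surjectivity induction for the map \( A \mapsto \rho^{-1}(A) \) just fills in details the paper leaves to the reader). One caution: the sentence ``This already yields an abstract order-isomorphism'' is precisely the inference the paper's footnote warns against under \( \AD \) -- matching alternation patterns alone would require \( \Theta \)-many simultaneous choices of how to pair up the two-element nonselfdual antichains -- but since you then construct the canonical isomorphism via \( \rho^{-1} \) and prove it surjective, the argument is unharmed; just delete or qualify that sentence.
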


We now move to spaces \( X \) with a compact perfect kernel (this includes the case of countable spaces).
It turns out that the compact rank from Section~\ref{subsec:compactrank} provides a crucial dividing line.

\begin{theorem}[\( \AD \)]\label{countable_cof_and_rkcomp}
    Let $X$ be a zero-dimensional Polish space such that \( \kerp{X} \) is compact,
    and let $\alpha \in \Theta_X \cap \countablecof $.
	\begin{enumerate-(1)}
		\item \label{countable_cof_and_rkcomp-1}
		If $\alpha > \rkcomp{X}$, then $\alpha \in \nsdordinal{X}$.
		\item \label{countable_cof_and_rkcomp-2}
		If $\alpha < \rkcomp{X}$, then $\alpha \in \sdordinal{X}$.
		\item \label{countable_cof_and_rkcomp-3}
		If \( \alpha = \rkcomp{X} \), then \( \alpha \in \sdordinal{X} \) if and only if there is a clopen partition \( (V_n)_{n \in \omega} \) of \( X \) and \( N \in \omega \) such that \( \alpha = \sup_{n \geq N} (\rkcb{V_n}+1) \).
	\end{enumerate-(1)}
\end{theorem}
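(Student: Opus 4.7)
The plan is to handle parts~(2), (1), (3) in that order, paralleling \cref{wadgeZ_eq_wadgeBaire_onedir} with $\rkcb{X}$ replaced by $\rkcomp{X}$; the new constructive tool is \cref{sequence_clopen_derivativeto_alpha}, which lets us partition $X$ into clopen pieces of any prescribed CB-rank strictly below $\rkcomp{X}$.

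For~(2), fix $\alpha < \rkcomp{X}$ limit of countable cofinality and a strictly increasing cofinal sequence $(\alpha_k)_{k \in \omega}$ of nonselfdual ordinals in $\alpha$. By \cref{relativization} or \cref{cor:diffclassesinuncountable}, each $\alpha_k$ is realized in $\baire$ by some $\nsdclass_k = \differenceopenX{\beta_k}{\baire} \in \nsdc{\baire}$. Applying \cref{sequence_clopen_derivativeto_alpha} iteratively---at step~$k$, peeling off from the residual clopen subspace (which still has compact rank $\rkcomp{X}$, since we have only removed clopen pieces disjoint from $\derrkcb{\rkcomp{X}}{X}$) a clopen set $X_k$ with $\rkcb{X_k} = \beta_k+1 < \rkcomp{X}$---I obtain a clopen partition $(X_k)_{k \in \omega}$ of $X$ in which each $\nsdclass_k(X_k)$ is nonselfdual. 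Then \cref{calculus_sup} together with its ``moreover'' clause (applicable since $\differenceopenXcheck{\beta_k}{\baire} \subseteq \differenceopenX{\beta_{k+1}}{\baire}$ by Wadge's Lemma in $\baire$) produces a selfdual Wadge class of rank exactly $\alpha$.

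For~(1), assume towards a contradiction that $A \in \sds{X}$ has $\wrank{A}^X = \alpha$ for some limit $\alpha > \rkcomp{X}$ of countable cofinality. By \cref{prop:selfdualanalysis} and \cref{rmk:sefldualanalysis}\ref{rmk:sefldualanalysis-1}, write $A = \bigcup_n(A_n \cap V_n)$ with clopen partition $(V_n)$, nonselfdual sets $A_n \slew^X A$ each equal either to $X$ or to a subset of $V_n$, and $\alpha = \sup_n(\wrank{A_n}^X+1)$. Compactness of $\derrkcb{\rkcomp{X}}{X}$ yields some $N$ with $V_n \cap \derrkcb{\rkcomp{X}}{X} = \emptyset$ for every $n \geq N$; by \cref{clopen_rkgb_no_change} together with the observation that every perfect subset of $V_n$ must lie in $\kerp{X} \subseteq \derrkcb{\rkcomp{X}}{X}$, each such $V_n$ is countable with $\rkcb{V_n} \leq \rkcomp{X}$. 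Then \cref{countable_space_each_set_is_difference} gives $A_n \in \differenceopenX{\rkcomp{X}}{V_n} \subseteq \differenceopenX{\rkcomp{X}}{X}$, so $\wrank{A_n}^X$ is bounded by the rank of $\differenceopenX{\rkcomp{X}}{\baire}$ in $\baire$, which lies strictly below $\rkcomp{X}+\omega$ by the explicit form of the difference hierarchy. Since $\alpha > \rkcomp{X}$ is a limit of countable cofinality it satisfies $\alpha \geq \rkcomp{X}+\omega$, and combining this with $\wrank{A_n}^X + 1 < \alpha$ for the finitely many $n < N$ forces $\sup_n(\wrank{A_n}^X + 1) < \alpha$, contradicting the rank formula.

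For~(3), the forward direction refines (1): starting from a selfdual $A$ of rank $\alpha = \rkcomp{X}$, apply \cref{prop:selfdualanalysis} and, if necessary, further refine each $V_n$ with $\rkcb{V_n} = \alpha$ via \cref{sequence_clopen_derivativeto_alpha} applied inside $V_n$, so that $\rkcb{V_n} < \alpha$ for every $n \geq N$; if the resulting partition failed $\alpha = \sup_{n \geq N}(\rkcb{V_n}+1)$, one could fix $\beta < \alpha$ bounding every $\rkcb{V_n}$, observe $\beta+\omega \leq \alpha$, and rerun the estimate of~(1) with $\beta$ in place of $\rkcomp{X}$ to force $\sup_n(\wrank{A_n}^X + 1) < \alpha$. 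The backward direction parallels~(2): extract from the given partition a subsequence $(n_k)$ with $\rkcb{V_{n_k}}$ cofinal in $\alpha$, pick $\beta_k < \rkcb{V_{n_k}}$ also cofinal in $\alpha$, realize $\differenceopenX{\beta_k}{V_{n_k}} \in \nsdc{V_{n_k}}$, and invoke \cref{calculus_sup} to produce a selfdual Wadge class of rank exactly $\alpha$. The most delicate point throughout is certifying that the selfdual class produced by \cref{calculus_sup} in the backward halves of~(2) and~(3) really has rank $\alpha$ and not strictly less: this hinges on the Wadge hierarchy below $\rkcomp{X}$ being Baire-like at nonselfdual levels, which is essentially~(2) itself. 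The clean resolution is therefore to run the three parts as a simultaneous induction on $\alpha$, closing the loop with the backward half of~(3).
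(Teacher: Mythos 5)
Your architecture is the paper's: part (1) via \cref{prop:selfdualanalysis}, compactness of \( \derrkcb{\rkcomp{X}}{X} \), countability of the tail pieces \( V_n \), and the bound \( \rank{\differenceopenX{\rkcomp{X}}{X}} \leq 2\cdot\rkcomp{X}+1 < \rkcomp{X}+\omega \leq \alpha \); part (2) via \cref{sequence_clopen_derivativeto_alpha} and \cref{calculus_sup}; part (3) as a hybrid of the two. One construction, however, has a genuine gap. In part (2) (and in the refinement step of the forward direction of (3)) the iterative peeling does not, as described, yield a clopen partition of \( X \): \cref{sequence_clopen_derivativeto_alpha} only guarantees \( C_0 \supseteq \derrkcb{\beta_k+1}{R_k} \), so the residuals \( R_k \) need not shrink to \( \emptyset \), and their intersection is closed but in general not open, so it cannot be appended as a final piece. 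The fix is to apply \cref{sequence_clopen_derivativeto_alpha} \emph{once}, with parameter \( \alpha \) itself (legitimate since \( \alpha < \rkcomp{X} \)): this yields a genuine partition \( (C_n)_{n\in\omega} \) with \( \rkcb{C_{n+1}} = \alpha+1 \) for every \( n \), and since each \( C_{n+1} \) is countable (being disjoint from \( \derrkcb{\alpha+1}{X} \supseteq \kerp{X} \)), \cref{cor:diffclassesinuncountable} makes \( \differenceopenX{\alpha_n}{C_{n+1}} \) nonselfdual in \( C_{n+1} \) for any \( \alpha_n < \alpha \); one then puts the trivial class on \( C_0 \) and invokes \cref{calculus_sup}. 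For the refinement in (3), the countable pieces \( V_n \) of limit CB-rank \( \alpha \) are better handled by \cref{decomposition_in_simple_clopen}, which needs no compact-rank hypothesis (your route would additionally require checking \( \rkcomp{V_n} = \alpha \), and suffers from the same exhaustion problem).

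The simultaneous induction on \( \alpha \) that you propose to close the argument is unnecessary. That the supremum produced by \cref{calculus_sup} has rank exactly \( \alpha \) only requires that the ranks \( \rank{\differenceopenX{\alpha_n}{X}} \) be strictly below \( \alpha \) and cofinal in it, and this follows from \( \alpha_n \leq \rank{\differenceopenX{\alpha_n}{X}} \leq 2\cdot\alpha_n+1 < \alpha \) (the last inequality because \( \alpha \) is limit), which is a consequence of the alternating property (\cref{after_nsd_is_sd} and \cref{alternating_duality_theorem_intro}) together with \cref{lem:diffclassesinuncountable} and \cref{cor:diffclassesinuncountable} --- all established independently of, and prior to, the analysis of limit levels. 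So the three parts can be proved separately, with no circularity.
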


\begin{proof}
\ref{countable_cof_and_rkcomp-1}
Suppose towards a contradiction that \( \alpha > \rkcomp{X} \) but \( \alpha \in \sdordinal{X} \):
we adapt the proof of \cref{wadgeZ_eq_wadgeBaire_onedir} to show that \(\alpha\) 
needs to be a successor ordinal, contradicting \( \alpha \in \countablecof \).
Given any \( A \in \sds{X} \) such that \( \rank{A}_\w^X = \alpha \), use \cref{prop:selfdualanalysis} to get a clopen partition $(V_n)_{n \in \omega}$ of \( X \) and sets $(A_n)_{n \in \omega}$
	such that  $A_n \slew^X A = \bigcup_{n \in \omega} (A_n \cap V_n)$ and \( \alpha = \sup_{n \in \omega} (\alpha_n+1) \) for \( \alpha_n= \rank{A_n}_\w^X \).
	Since by definition $\derrkcb{\rkcomp{X}}{X}$ is compact, there exists $N \in \omega$ such that
	$V_n \subseteq X \setminus \derrkcb{\rkcomp{X}}{X}$ for all $n \geq N$. 
Using that either \( A_n = X \) or \( A_n \subseteq V_n \), this implies that \(  A_n \in \differenceopenX{\rkcomp{X}}{X} \) for all \( n \geq N \). Arguing as in \cref{wadgeZ_eq_wadgeBaire_onedir} (and using that \( \alpha > \rkcomp{X} \) is limit), we get that \( \rank{\differenceopenX{\rkcomp{X}}{X}}_X \leq \rank{\differenceopenX{\rkcomp{X}}{\baire}}_{\baire} = 2 \cdot \rkcomp{X}+1 < \alpha \), hence \( \sup_{n \geq N} (\alpha_n+1) < \alpha \). Thus \( \alpha = \sup_{n \in \omega}(\alpha_n+1) = \max_{n < N} (\alpha_n+1) \) is a successor ordinal, a contradiction. 

\ref{countable_cof_and_rkcomp-2}
    Suppose now that $\alpha<\rkcomp{X}$, so that in particular $X$ is not compact.
    Then by \cref{sequence_clopen_derivativeto_alpha} there is a clopen partition $(C_n)_n$ of $X$ such that
    $\derrkcb{\alpha+1}{X}\subseteq C_0$ and $\rkcb{C_{n+1}}=\alpha+1$ for all $n\in\omega$.
    For every \( n \geq 1 \) and \( \beta < \alpha \), \cref{cor:diffclassesinuncountable} 
    implies that \( \differencesigmaalphaX{\beta}{1}{C_n} \in \nsdc{C_n} \). Moreover, \(  \beta \leq \rank{\differencesigmaalphaX{\beta}{1}{X}} \leq 2\beta+1  \): 
    if \( X \) is countable this follows from the alternating property and \cref{cor:diffclassesinuncountable} together with \( \rkcb{X} \geq \rkcomp{X} > \alpha > \beta \), while if \( X \) is uncountable we can use the alternating property and \cref{lem:diffclassesinuncountable}. Since \(\alpha\) is limit, then \( 2\beta+1 < \alpha \) and hence \( \beta \leq \rank{\differencesigmaalphaX{\beta}{1}{X}}  < \alpha \). Therefore taking any strictly increasing sequence \( (\alpha_n)_{n \geq 1} \) cofinal in \( \alpha \) we have \( \differenceopenX{\alpha_n}{C_n} \in \nsdc{C_n} \), \( \rank{\differencesigmaalphaX{\alpha_n}{1}{X}}_{X} < \alpha \), and \( \sup_{n \geq 1} \rank{\differencesigmaalphaX{\alpha_n}{1}{X}}  = \alpha \). Setting \( \nsdclass_0 = \{ \baire \} \) and \( \nsdclass_n = \difference{\alpha_n}{\boldsymbol{\Sigma}^0_1} \) for \( n \geq 1 \), and applying \cref{calculus_sup}, we then get \( \alpha \in \sdordinal{X} \), as desired.
 
 \ref{countable_cof_and_rkcomp-3}
The proof of the desired equivalence is a variation of the arguments for parts~\ref{countable_cof_and_rkcomp-1} and~\ref{countable_cof_and_rkcomp-2}. Assume first that \( \alpha \in \sdordinal{X} \).
Given any \( A \in \sds{X} \) such that \( \wrank{A}^X=\alpha \), let \( (V_n)_{n \in \omega} \) and \( (A_n)_{n \in \omega} \) be as in \cref{prop:selfdualanalysis}. 
Since \( \derrkcb{\alpha}{X} \) is compact, there is \( N \in \omega \) such that \( V_n \cap  \derrkcb{\alpha}{X} = \emptyset \) for all \( n \geq N \), which also means 
 \( \rkcb{V_n} \leq \alpha \) by \cref{clopen_rkgb_no_change}.
 If \( \rkcb{V_n} = \alpha \) for some \( n \geq N \), then we apply \cref{decomposition_in_simple_clopen} to all such countable spaces  \( V_n \) and get the desired partition. (Recall that \(\alpha\) is limit.) So we can assume that \( \rkcb{V_n} + 1 \leq \alpha \) for all \( n \geq N \) and we only need to show that for every \( \beta < \alpha \) there is \( n \geq N \) with \( \rkcb{V_n} \geq \beta \). Assume not: since we can assume that either \( A_n = X \) or \( A_n \subseteq V_n \), by \cref{countable_space_each_set_is_difference} we would
have \( A_n \in \differenceopenX{\rkcb{V_n}}{V_n} \subseteq \differenceopenX{\beta}{V_n}  \subseteq \differenceopenX{\beta}{X} \) for all \( n \geq N \), and since the usual computation gives \( \rank{\differenceopenX{\beta}{X}}_X \leq 2 \beta+1 \) and \( 2 \beta +1 < \alpha \) because \(\alpha\) is limit, then \( \alpha = \sup_{n \in \omega} (\wrank{A_n}^X+1) = \max_{n < N} (\wrank{A_n}^X+1) \), contradicting the fact that \( \alpha \) is limit.

Conversely, let \( (V_n)_{n \in \omega} \) and \( N \in \omega \) be as in part~\ref{countable_cof_and_rkcomp-3}, and fix a sequence \( (\alpha_k)_{k \in \omega} \) cofinal in \( \alpha \). Since \(\alpha\) is limit, the assumption on \( (V_n)_{n \in \omega} \) implies that for each \( k \in \omega \) we can find \( n_k \geq N \) such that \( \alpha_k < \rkcb{V_{n_k}} \)  and further ensure that the map \( k \mapsto n_k \) is injective. By \cref{cor:diffclassesinuncountable} the class \( \nsdclass_{n_k}(V_{n_k})  = \differenceopenX{\alpha_k}{V_{n_k}} \) is nonselfdual in \( V_{n_k} \), and as in the proof of part~\ref{countable_cof_and_rkcomp-2} one can show that \( \alpha_{k} \leq \rank{\differenceopenX{\alpha_k}{X}} \leq 2 \alpha_k +1 < \alpha \). Setting \( \nsdclass_n = \{ \baire \} \) for all \( n \in \omega \) which are not of the form \( n_k \) and applying \cref{calculus_sup} we get \( \alpha \in \sdordinal{X} \), as desired.
\end{proof}

\begin{remark} \label{rmk:countable_cof_and_rkcomp}
Together with \cref{decomposition_in_simple_clopen}, \cref{countable_cof_and_rkcomp}\ref{countable_cof_and_rkcomp-3} shows that if \( X \) is a countable zero-dimensional Polish space and \( \rkcb{X} = \rkcomp{X} = \alpha < \Theta_X \) is limit, then necessarily \( \alpha \in \sdordinal{X} \).
\end{remark}

Concerning the characterization in \cref{countable_cof_and_rkcomp}\ref{countable_cof_and_rkcomp-3}, it is easy to provide examples of both sorts. 

\begin{example} \label{xmp:countable_cof_and_rkcomp}
Let \( \alpha \) be a countable limit ordinal and fix a strictly increasing sequence \( (\alpha_n)_{n \in \omega} \) cofinal in \( \alpha \). Denote by \( \bigoplus_{n \in \omega} X_n \) the (disjoint) sum of the topological spaces \( X_n \). (If \( X_n \subseteq \baire \), then up to homeomorphism the sum can be construed
 as \( \bigoplus_{n \in \omega} X_n= \bigcup_{n \in \omega} \{ n \conc x \mid n \in \omega \wedge x \in X_n \} \).) Then the spaces
\[ 
Y_\alpha = \bigoplus_{n \in \omega} K_{\alpha_n+1} \qquad \text{and} \qquad Z_\alpha = \ptglsq{(\omega \times K_{\alpha_n+1})_{n \in \omega}}
 \] 
are both countable zero-dimensional Polish spaces with compact rank \( \alpha \), and by
\cref{countable_cof_and_rkcomp}\ref{countable_cof_and_rkcomp-3} we have
  \( \alpha \in \sdordinal{Y_\alpha} \) but \( \alpha \in \nsdordinal{Z_\alpha} \). Moreover, \( \rkcb{Y_\alpha} = \alpha \) and \( \rkcb{Z_\alpha} = \alpha+1 \) (indeed, \( \cbtype(Z_\alpha) = (\alpha+1,1) \), i.e.\ \( Z_\alpha \) is also simple), and the values for such CB-ranks are as small as possible by \cref{rmk:countable_cof_and_rkcomp}.
  \end{example}

The following result, together with \cref{uncountablecof_ordinal_intro} and \cref{wadgeZ_eq_wadgeBaire_onedir}, completes the proof of part \ref{thm:maintheorem-4} of \cref{thm:maintheorem}. Let \( \alpha_X \) be the smallest \( \alpha_X \leq \Theta_X \) such that \( \alpha \in \sdordinal{X} \) for all limit \( \alpha < \alpha_X \) with \( \cof{\alpha}= \omega \). By Theorems~\ref{wadgeZ_eq_wadgeBaire_onedir} and~\ref{countable_cof_and_rkcomp} we get:

\begin{corollary}[\( \AD \)]\label{countablecof_ordinal_intro}
Let \( X \)	be an arbitrary zero-dimensional Polish space. Then 
either \( \alpha_X = \Theta_X = \Theta \) (if \( \kerp{X} \) is not compact), or else \( \alpha_X < \omega_1 \) and \( \alpha \in \nsdordinal{X} \) for all limit \( \alpha_X \leq \alpha < \Theta_X \).
\end{corollary}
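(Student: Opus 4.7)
My plan is to derive this corollary as a direct synthesis of the two preceding theorems together with \cref{uncountablecof_ordinal_intro}, splitting on whether $\kerp{X}$ is compact. For limit ordinals of uncountable cofinality I can always appeal to \cref{uncountablecof_ordinal_intro}, so the whole content of the argument concerns limits of countable cofinality, which is precisely what those two theorems address.

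Suppose first that $\kerp{X}$ is not compact. Then $\kerp{X}$ is nonempty and perfect, so $X$ is uncountable and $\Theta_X = \Theta$ by \cref{prop:decompositionofslefdual}. I would invoke \cref{wadgeZ_eq_wadgeBaire_onedir} to conclude that every $\alpha \in \Theta_X \cap \countablecof$ lies in $\sdordinal{X}$, so no limit ordinal of countable cofinality strictly below $\Theta_X$ is nonselfdual. By the defining property of $\alpha_X$ this forces $\alpha_X = \Theta_X = \Theta$, and the residual assertion ``$\alpha \in \nsdordinal{X}$ for all limit $\alpha_X \leq \alpha < \Theta_X$'' is then vacuous.

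Suppose now that $\kerp{X}$ is compact, so that $\rkcomp{X} < \omega_1$ is defined. \cref{countable_cof_and_rkcomp}\ref{countable_cof_and_rkcomp-2} shows that every limit $\alpha < \rkcomp{X}$ with $\cof{\alpha} = \omega$ lies in $\sdordinal{X}$, while part \ref{countable_cof_and_rkcomp-1} of the same theorem shows that every $\alpha \in \Theta_X \cap \countablecof$ with $\alpha > \rkcomp{X}$ lies in $\nsdordinal{X}$. The borderline value $\alpha = \rkcomp{X}$, in the case where it is itself a countable-cofinality limit below $\Theta_X$, is then decided by part \ref{countable_cof_and_rkcomp-3}. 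In either event, $\alpha_X$ is pinned down between $\rkcomp{X}$ and the next countable-cofinality limit above it, so in particular $\alpha_X < \omega_1$. The closing claim that every limit $\alpha \in [\alpha_X,\Theta_X)$ lies in $\nsdordinal{X}$ then combines \cref{countable_cof_and_rkcomp}\ref{countable_cof_and_rkcomp-1} (for countable cofinality) with \cref{uncountablecof_ordinal_intro} (for uncountable cofinality).

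The whole argument is essentially a bookkeeping exercise, and I expect no substantive obstacle. The only mildly delicate point is handling the borderline $\alpha = \rkcomp{X}$ correctly, because whether this ordinal is selfdual depends on the clopen-partition criterion of \cref{countable_cof_and_rkcomp}\ref{countable_cof_and_rkcomp-3}; however this affects only the precise value of $\alpha_X$ by at most a jump to the next countable-cofinality limit, never its countability.
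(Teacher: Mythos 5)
Your proposal is correct and follows essentially the same route as the paper, which likewise derives the corollary by combining \cref{wadgeZ_eq_wadgeBaire_onedir} (non-compact kernel case), \cref{countable_cof_and_rkcomp} (compact kernel case, with the borderline \( \alpha = \rkcomp{X} \) handled by its part~\ref{countable_cof_and_rkcomp-3}), and \cref{uncountablecof_ordinal_intro} for limits of uncountable cofinality. The only slight imprecision is the claim that \( \alpha_X \) lies \emph{between} \( \rkcomp{X} \) and the next countable-cofinality limit above it — when \( \rkcomp{X} = \lambda + n \) with \( n \geq 2 \) one gets \( \alpha_X = \lambda + 1 < \rkcomp{X} \) — but this does not affect the conclusion \( \alpha_X < \omega_1 \).
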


Indeed,
by Theorems~\ref{wadgeZ_eq_wadgeBaire_onedir} and~\ref{countable_cof_and_rkcomp} the ordinal \( \alpha_X \) can be computed as follows:
\begin{itemizenew}
\item
If \( \kerp{X} \) is not compact, then \( X \) is uncountable and \( \alpha_X = \Theta_X = \Theta \).
\item
If \( \kerp{X} \) is compact and \( \rkcomp{X} \) is a successor ordinal, then \( \rkcomp{X} = \lambda + n \) for some limit or null \( \lambda < \omega_1 \) and \( n \in \omega \setminus \{ 0 \} \): set \( \alpha_X = \lambda +1 \) if \( \lambda > 0 \) and \( \alpha_X =0 \) otherwise.
\item
If \( \kerp{X} \) is compact and \( \rkcomp{X} = \lambda \) is limit, then we have to distinguish cases depending on whether \( \rkcomp{X} \in \sdordinal{X} \) or not. In the former case, we again set \( \alpha_X = \lambda+1 \). Otherwise we set \( \alpha_X = 0 \) if \( \lambda = \omega \), \( \alpha_X = \lambda'+1 \) if \( \lambda \) is of the form \( \lambda'+ \omega \) for some limit \( \lambda'< \omega_1 \), and \( \alpha_X = \lambda \) otherwise (i.e.\ if \( \lambda \) is a limit of limit ordinals).
\end{itemizenew}
In particular, \( \alpha_X \) is either \( 0 \), or the successor of a countable limit ordinal,  or a limit of countable limit ordinals, or the ordinal \( \Theta \). Moreover, when \( \alpha_X < \omega_1 \) we have \( \alpha_X \leq \rkcb{X} \) unless \( \rkcb{X} \) is limit, in which case we might have \( \alpha_X \leq \rkcb{X} \) or \( \alpha_X = \rkcb{X}+1 \).

\subsection{Length of the Wadge hierarchy for countable spaces} \label{subsec:length}

By \cref{prop:decompositionofslefdual} we know that if a zero-dimensional Polish space \( X \) is uncountable, then \( \Theta_X = \Theta \). Thus to complete the proof of part \ref{thm:maintheorem-5} of \cref{thm:maintheorem} we just need to compute the value of \( \Theta_X \) when \( X \) is countable.
Notice that since we consider only nonempty spaces \( X \), the minimal possible length for \( \wadge{X} \) is \( 2 \).

\begin{theorem}\label{length_countable_hierarchy_intro}
	Let $X$ be a 
	countable Polish space. 
	Then $\Theta_X = 2 \cdot \rkcb{X} + \varepsilon_X$, for some $\varepsilon_X \in \{-1,0,1\}$.
\end{theorem}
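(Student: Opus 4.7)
My plan is to carry out a transfinite computation of the Wadge ranks of the nonselfdual difference classes in $X$, then identify the Wadge class occupying the top of $\wadge{X}$, and combine these to compute $\Theta_X$. The key inputs are \cref{cor:diffclassesinuncountable}, which tells us that every nonselfdual Wadge class in $X$ is of the form $\differenceopenX{\beta}{X}$ or its dual for some $\beta < \rkcb{X}$; \cref{simple_is_NSD} together with \cref{cor:lastclass}, which identify $\pow(X)$ as the unique top selfdual Wadge class when $X$ is not simple; and the ``moreover'' part of \cref{countable_space_each_set_is_difference}, which pins down the top pair when $X$ is simple.

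First I define $r(\beta) = \rank{\differenceopenX{\beta}{X}}_X$ for each $\beta < \rkcb{X}$ and prove by transfinite induction that $r(\beta) \in \{2\beta, 2\beta + 1\}$. The base case $r(0) = 1$ is clear. For successor $\beta = \gamma + 1$, the alternating property (\cref{alternating_duality_theorem_intro}) immediately yields $r(\beta) = r(\gamma) + 2$, and since $r(\gamma) \in \{2\gamma, 2\gamma+1\}$ by induction, we get $r(\beta) \in \{2(\gamma+1), 2(\gamma+1)+1\}$, so the induction propagates. For limit $\beta < \rkcb{X}$, the inductive bounds force $\sup_{\gamma < \beta} r(\gamma) = 2\beta$; since ranks are consecutive ordinals by semi-well-ordering (\cref{prop:semiwellordered}), the Wadge rank $2\beta$ hosts either a nonselfdual pair (then necessarily $r(\beta) = 2\beta$) or a selfdual degree (then the alternating property forces $r(\beta) = 2\beta + 1$).

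Next I identify the top of $\wadge{X}$ and compute $\Theta_X$ by three cases. If $X$ is simple with $\rkcb{X} = \gamma + 1$, then by \cref{countable_space_each_set_is_difference} the class $\differenceopenX{\gamma}{X}$ and its dual together exhaust $\pow(X)$, so $\Theta_X = r(\gamma) + 1 \in \{2\gamma + 1, 2\gamma + 2\} = \{2\rkcb{X} - 1, 2\rkcb{X}\}$, giving $\varepsilon_X \in \{-1, 0\}$. If $X$ is not simple with $\rkcb{X} = \gamma + 1$ successor, then \cref{simple_is_NSD} and \cref{cor:lastclass} place $\pow(X)$ as a single selfdual class immediately above $\differenceopenX{\gamma}{X}$, so $\Theta_X = r(\gamma) + 2 \in \{2\rkcb{X}, 2\rkcb{X} + 1\}$, giving $\varepsilon_X \in \{0, 1\}$. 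Finally, if $\rkcb{X} = \lambda$ is limit (so $X$ is automatically not simple), then $\sup_{\beta < \lambda} r(\beta) = 2\lambda$, and I claim the Wadge rank $2\lambda$ hosts the selfdual class $\pow(X)$: no nonselfdual class can occupy this rank by \cref{cor:diffclassesinuncountable} (all nonselfdual classes already appear strictly below), so by \cref{cor:lastclass} any set of rank $2\lambda$ is selfdual and generates $\pow(X)$. Hence $\Theta_X = 2\lambda + 1 = 2\rkcb{X} + 1$ and $\varepsilon_X = 1$.

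The main obstacle is the careful limit-level bookkeeping, which requires reconciling the constraint from \cref{cor:diffclassesinuncountable} (nonselfdual classes are indexed by $\beta < \rkcb{X}$) with the no-gaps structure imposed by semi-well-ordering. The transition between ``Baire-like'' and ``Cantor-like'' behaviour at limit Wadge ranks, governed by the parameter $\alpha_X$, determines whether $r(\beta) = 2\beta$ or $r(\beta) = 2\beta + 1$; matching these possibilities against the position of the top class in the three cases above produces exactly the three allowed values $\varepsilon_X \in \{-1, 0, 1\}$.
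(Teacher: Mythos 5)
Your proposal is correct and follows essentially the same route as the paper: both rest on \cref{cor:diffclassesinuncountable} to enumerate the nonselfdual classes, the alternating property to space them out, and \cref{countable_space_each_set_is_difference}, \cref{simple_is_NSD}, and \cref{cor:lastclass} to identify the top of the hierarchy according to whether \( X \) is simple. The only difference is bookkeeping — you run an explicit induction showing \( \rank{\differenceopenX{\beta}{X}}_X \in \{2\cdot\beta, 2\cdot\beta+1\} \), whereas the paper writes \( \rkcb{X} = \lambda + n \) and counts the classes of rank \( \geq \lambda \) — but the content is the same.
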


\begin{proof}
Let $\rkcb{X} = \lambda + n$ for some $\lambda$ limit or null and $n\in\omega$. We distinguish various cases.

If \( \lambda = 0 \) (i.e.\ \( \rkcb{X} \) is finite), then \( n > 0 \) because \( X \neq \emptyset \). Then by \cref{cor:diffclassesinuncountable} 
the largest nonselfdual Wadge classes in \( \wadge{X} \) are \( \differenceopenX{n-1}{X} \) and its dual, and by the alternating property (and the fact that the Wadge rank starts by \( 1 \)) we have \( \rank{\differenceopenX{n-1}{X}} = 2n-1 \). By \cref{countable_space_each_set_is_difference} and \cref{simple_is_NSD}, it follows that \( \Theta_X = 2n = 2 \cdot \rkcb{X} \) if \( X \) is simple, in which case we set \( \varepsilon_X = 0 \), and \( \Theta_X =  2n +1= 2 \cdot \rkcb{X}+1 \) otherwise, in which case \( \varepsilon_X = 1 \).

Assume now that \( \lambda > 0 \). By the alternating property and \cref{cor:diffclassesinuncountable} we know that \( \Theta_X \geq \lambda \) and that the ranks of difference classes of the form \( \differenceopenX{\beta}{X} \) with \( \beta < \lambda \) are cofinal in \( \lambda \). So we have two subcases. If \( n = 0 \), then \( X \) is not simple and \cref{simple_is_NSD} gives that \( \pow(X) \) is a selfdual Wadge class of rank exactly \(\lambda\): thus \( \Theta_X = \lambda+1 = 2 \cdot \rkcb{X}+1 \) and we set \( \varepsilon_X = 1 \). If instead \( n > 0 \), then by the alternating property and \cref{cor:diffclassesinuncountable} again we know that there are precisely \( n \)-many nonselfdual Wadge classes with rank \( \geq \lambda \), i.e.\ the classes \( \differenceopenX{\lambda+i}{X} \) for \( i < n \), which by the alternating property means that we have at least \( 2n-1 \) Wadge classes with rank \( \geq \lambda \). Then we have to consider two issues. First, if \( \alpha_X > \lambda \) then \( \lambda \in \sdordinal{X} \) and \( \rank{\differenceopenX{\lambda}{X}}  = \lambda+1\), i.e.\ we have one more (selfdual) class before \( \differenceopenX{\lambda}{X} \); if instead \( \alpha_X \leq \lambda \) then \( \rank{\differenceopenX{\lambda}{X}} = \lambda \) and there is no additional class to be taken into account. Second, 
by \cref{countable_space_each_set_is_difference} and \cref{simple_is_NSD} we know that if \( X \) is simple then 
\( \differenceopenX{\lambda+n-1}{X} \) is the last Wadge class in \( \wadge{X} \), while if \( X \) is not simple then
\( \pow(X) \) forms yet one more Wadge class immediately after \( \differenceopenX{\lambda+n-1}{X} \) and its dual. Summing up the discussion above we have that \( \Theta_X = 2 \cdot \rkcb{X} + \varepsilon_X \) where \( \varepsilon_X = \beta_X + \delta_X -1 \) and \( \beta_X = 0 \) if \( X \) is simple and \( \beta_X = 1 \) otherwise, while \( \delta_X = 0 \) if \( \alpha_X \leq \lambda \) and \( \delta_X = 1 \) otherwise.
\end{proof}

\subsection{Optimality} \label{section_sharpness}

The previous results are sharp, meaning that all possible shapes for a Wadge hierarchy \( \wadge{X} \) which are coherent with our description are actually realized by some zero-dimensional Polish space \( X \). We actually prove a finer result, showing that all possible configurations of the parameters \( \rkcb{X} \), \( \rkcomp{X} \), and, if \( X \) is countable,
 \( \cbtype(X) \) can be realized in suitable spaces, together with some additional features relevant to our analysis of \( \wadge{X} \). The limitations in the statement follow from the possible values of the CB-rank and the CB-type (and the fact that we consider only nonempty spaces), the definition of the compact rank, \cref{decomposition_in_simple_clopen}, and \cref{rmk:countable_cof_and_rkcomp}. Notice also that the additional conditions on \( \beta \) and \( \gamma \) are relevant only in part~\ref{prop:sharpness-1}: in particular, in part~\ref{prop:sharpness-2} the ordinal \( \gamma \) can unconditionally assume any value \( \leq \alpha \).

\begin{proposition} \label{prop:sharpness}
Let \( (\alpha,\beta) \in \omega_1 \times (\omega+1) \) and \( \gamma \leq \alpha \). Further assume that \( \beta \geq 1 \), \( \beta = \omega \) if \( \alpha \) is limit, \( \gamma = \alpha \) if \( \beta = \omega \), and \( \gamma \leq \alpha' \) if \( \alpha = \alpha'+1 \) and \( \beta < \omega \). 
\begin{enumerate-(1)}
\item \label{prop:sharpness-1}
If \( \alpha \geq 1 \) there is a countable (zero-dimensional) Polish space \(X \) with \( \cbtype(X) = (\alpha,\beta) \) and \( \rkcomp{X} = \gamma \). Moreover, when \( \gamma \) is limit we can require to have \( \gamma \in \sdordinal{X} \), or else to have \( \gamma \in \nsdordinal{X} \) (provided that \( \alpha > \gamma \), in which case \(\alpha\) needs to be a successor ordinal).
\item \label{prop:sharpness-2}
There is an uncountable zero-dimensional Polish space \( X \) with \( \kerp{X} \) compact, \( \rkcb{X} =\alpha \), and \( \rkcomp{X} = \gamma \). Moreover, when \( \gamma \) is limit we can freely decide to have \( \gamma \in \sdordinal{X} \) or \( \gamma \in \nsdordinal{X} \).
\item \label{prop:sharpness-3}
There is a(n uncountable) zero-dimensional Polish space \( X \) with \( \kerp{X} \) non-compact and \( \rkcb{X} = \alpha \). 
\end{enumerate-(1)}
\end{proposition}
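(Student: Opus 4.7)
The plan is to build each required space as a disjoint sum of three families of canonical building blocks already available: the simple compact countable spaces \(K_{\alpha+1}\) from \cref{definitionKalpha}, and the ``selfdual limit'' and ``nonselfdual limit'' spaces \(Y_\lambda=\bigoplus_{n\in\omega} K_{\lambda_n+1}\) and \(Z_\lambda=\ptglsq{(\omega\times K_{\lambda_n+1})_{n\in\omega}}\) from \cref{xmp:countable_cof_and_rkcomp}. The strategy in all three parts is uniform: choose a base whose compact rank is exactly \(\gamma\), then attach compact ``toppings'' to realize the prescribed CB-type or CB-rank without disturbing \(\rkcomp\).

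For part~\ref{prop:sharpness-1} I split by \(\gamma\) and by whether \(\gamma=\alpha\) or \(\gamma<\alpha\). If \(\gamma=0\) (forcing \(\alpha\) successor and \(\beta<\omega\) by the constraints), take \(X=\bigoplus_{i<\beta}K_\alpha\). If \(\gamma=\alpha\) (which forces \(\beta=\omega\)), take \(X=Y_\alpha\) when \(\alpha\) is limit and \(X=\bigoplus_{n\in\omega}K_\alpha\) when \(\alpha\) is successor; in the former case \(\gamma\in\sdordinal{X}\) holds automatically by \cref{rmk:countable_cof_and_rkcomp}, which agrees with the statement since the \(\nsdordinal\) option requires \(\alpha>\gamma\). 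If \(\gamma<\alpha\) is a nonzero successor, take \(X=\bigoplus_{i<\beta}K_\alpha\cup\bigoplus_{n\in\omega}K_\gamma\): the infinite tail keeps the derivatives of rank below \(\gamma\) non-compact, while \(\derrkcb{\gamma}{X}\) reduces to the (compact) derivative of the finite sum. Finally, if \(\gamma<\alpha\) is limit (so \(\alpha=\alpha'+1\) and \(\beta<\omega\), with \(\gamma\leq\alpha'\)), take \(X=\bigoplus_{i<\beta}K_\alpha\cup Y_\gamma\) for the selfdual option; for the nonselfdual option take \(X=\bigoplus_{i<\beta}K_\alpha\cup Z_\gamma\) when \(\gamma<\alpha'\), or \(X=\bigoplus_{i<\beta-1}K_\alpha\cup Z_\gamma\) when \(\gamma=\alpha'\) (to compensate for the extra top point of \(Z_\gamma\) that lies at level \(\alpha'\)). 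Verification of CB-type, compact rank, and the selfduality status of \(\gamma\) is routine: for the last point one applies \cref{countable_cof_and_rkcomp}\ref{countable_cof_and_rkcomp-3} to the evident clopen partition of \(X\) into its simple summands.

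Parts~\ref{prop:sharpness-2} and~\ref{prop:sharpness-3} reduce to part~\ref{prop:sharpness-1} by a disjoint-sum trick. For part~\ref{prop:sharpness-2} take \(X=\cantor\oplus K\) where \(K\) is a countable Polish space produced by part~\ref{prop:sharpness-1} with \(\rkcb{K}=\alpha\), \(\rkcomp{K}=\gamma\), and the desired selfduality at \(\gamma\); since \(\cantor\) is perfect, \(\derrkcb{\delta}{X}=\cantor\cup\derrkcb{\delta}{K}\) for every \(\delta\), which transfers both \(\rkcb{X}=\alpha\) and \(\rkcomp{X}=\gamma\) from \(K\), while a clopen partition of \(K\) witnessing the selfduality status of \(\gamma\) extends to one of \(X\) by adjoining \(\cantor\) to the ``top'' piece. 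For part~\ref{prop:sharpness-3}, take \(X=(\omega\times\cantor)\oplus K\) with \(K\) countable of CB-rank \(\alpha\) (or just \(X=\omega\times\cantor\) when \(\alpha=0\)); then \(\kerp{X}=\omega\times\cantor\) is non-compact and \(\rkcb{X}=\alpha\).

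The main obstacle is the nonselfdual branch of the limit case in part~\ref{prop:sharpness-1}: we must verify that no clopen partition of \(\bigoplus_{i<\beta}K_\alpha\cup Z_\gamma\) (or its variant) produces a cofinal-in-\(\gamma\) tail of CB-ranks strictly below \(\gamma\), which would force \(\gamma\in\sdordinal{X}\) via \cref{countable_cof_and_rkcomp}\ref{countable_cof_and_rkcomp-3}. This is controlled by the pointed-gluing geometry of \(Z_\gamma\): any clopen neighborhood of its top point must swallow all but finitely many of the layers \(\omega\times K_{\gamma_n+1}\), so in any clopen partition of \(X\) only one piece has CB-rank \(\geq\gamma\) while the remaining pieces live in finitely many layers and therefore have bounded CB-rank strictly below \(\gamma\). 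This blocks the offending cofinal supremum and secures \(\gamma\in\nsdordinal{X}\).
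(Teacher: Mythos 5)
Your Parts~\ref{prop:sharpness-1} and~\ref{prop:sharpness-3} are correct and essentially coincide with the paper's constructions (the paper writes \( \beta \times K_\alpha \) for your \( \bigoplus_{i<\beta}K_\alpha \) and uses \( \beta\times Z_{\alpha'} \) where you use \( \bigoplus_{i<\beta-1}K_\alpha\oplus Z_{\alpha'} \); these are interchangeable). Your closing argument for nonselfduality via \cref{countable_cof_and_rkcomp}\ref{countable_cof_and_rkcomp-3} is also the intended one, with one imprecision: it is not true that \emph{only one} piece of a clopen partition can have CB-rank \( \geq\gamma \) (finitely many pieces meeting the compact summands of rank \( \alpha>\gamma \) may as well); what the pointed-gluing geometry actually gives, and what suffices, is that all but finitely many pieces are contained in \( \bigcup_{m<k}(\omega\times K_{\gamma_m+1}) \) and hence have CB-rank bounded by a \emph{fixed} \( \delta<\gamma \), so no tail of the partition can have ranks cofinal in \( \gamma \).

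The genuine gap is in Part~\ref{prop:sharpness-2}: the reduction \( X=\cantor\oplus K \) with \( K \) countable produced by Part~\ref{prop:sharpness-1} cannot cover limit \( \alpha \). First, if \( \alpha \) is limit and \( \gamma<\alpha \), no countable \( K \) with \( \rkcb{K}=\alpha \) and \( \rkcomp{K}=\gamma \) exists: a nonempty compact countable space has successor CB-rank, so \( \derrkcb{\gamma}{K} \) compact and nonempty forces \( \rkcb{K} \) to be a successor (this is precisely why Part~\ref{prop:sharpness-1} carries the constraint \( \gamma=\alpha \) when \( \alpha \) is limit). Since \( \cantor \) contributes nothing to the CB-rank of a disjoint sum, the countable summand must carry the full rank \( \alpha \), and then its compact rank is forced to be \( \alpha \), not \( \gamma \). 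Second, even when \( \gamma=\alpha \) is limit, the nonselfdual option is unreachable this way: by \cref{rmk:countable_cof_and_rkcomp} every countable \( K \) with \( \rkcb{K}=\rkcomp{K}=\gamma \) limit has \( \gamma\in\sdordinal{K} \), and a witnessing clopen partition of \( K \) extends to one of \( \cantor\oplus K \), so \( \gamma\in\sdordinal{\cantor\oplus K} \) unavoidably. The paper circumvents both problems by \emph{gluing} rather than summing: it forms the compact space \( \hat{\cantor}_\alpha=\hat{\cantor}\cup K_{\alpha+1} \) (overlapping at \( \inftycostbaire{0} \)), which has \( \rkcb{\hat{\cantor}_\alpha}=\alpha \) for arbitrary \( \alpha \) and compact rank \( 0 \), and then sums with \( Y_\gamma \) or \( Z_\gamma \); and in the case \( \gamma=\alpha \) with the nonselfdual option it uses \( \hat{\cantor}\cup Z_\gamma \), where identifying the top point of \( Z_\gamma \) with a point of the perfect kernel keeps the CB-rank down at \( \gamma \) while preserving the obstruction to a selfdual partition. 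You should replace your Part~\ref{prop:sharpness-2} by these glued spaces (and note that \( \alpha=0 \) is handled by \( X=\cantor \), which your recipe also misses since Part~\ref{prop:sharpness-1} requires \( \alpha\geq 1 \)).
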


\begin{proof}
\ref{prop:sharpness-1}
Further developing \cref{xmp:countable_cof_and_rkcomp}, for any \( \delta < \omega_1 \) we set
\[ 
Y_\delta = \bigoplus_{n \in \omega} K_{\delta_n+1} \qquad \text{and} \qquad Z_\delta = \ptglsq{(\omega \times K_{\delta_n+1})_{n \in \omega}},
 \] 
where \( (\delta_n)_{n \in \omega} \) is a sequence cofinal in \( \delta \) if the latter is limit, or the constant sequence with value \( \delta' \) if \( \delta = \delta'+1 \). Then
\( \cbtype(Y_\delta) = (\delta,\omega) \), \( \cbtype(Z_\delta) = (\delta+1,1) \), and \( \rkcomp{Y_\delta} = \rkcomp{Z_\delta} = \delta \). Moreover, if \( \delta \) is limit then \( \delta \in \sdordinal{Y_\delta} \) while \( \delta \in \nsdordinal{Z_\delta} \). 

The space \( X =  Y_\alpha \) already settles the case when \( \alpha \) is limit or when \(\alpha\) is a successor but \( \beta = \omega \). So assume that \( \alpha = \alpha' +1 \) and \( 1 \leq \beta < \omega \), in which case \( \gamma \leq \alpha' \). If \( \gamma = \alpha' \) then \( X = (\beta \times K_\alpha) \oplus Y_{\alpha'} \) 
has CB-type \( (\alpha,\beta ) \) and compact rank \( \gamma \), as required.
For the additional part, notice that if \( \alpha' = \gamma \) is limit, this construction gives \( \gamma \in \sdordinal{X} \); if instead we want \(\gamma \in \nsdordinal{X} \), then we use \( X = \beta \times Z_{\alpha'} \), which has still CB-type \( (\alpha,\beta) \) and compact rank \( \gamma \) but is such that \( \gamma \in \nsdordinal{X} \), as desired. The remaining cases \( \gamma < \alpha' \) are treated similarly: we set \( X = (\beta \times K_\alpha) \oplus Y_{\gamma} \) if \( \gamma \) is successor or if \( \gamma \) is limit and we want \( \gamma \in \sdordinal{X} \), while we set \( X = (\beta \times K_\alpha) \oplus Z_\gamma \) if \( \gamma \) is limit and we want \( \gamma \in \nsdordinal{X} \).

\ref{prop:sharpness-2}
Consider the space \( \hat{\cantor} = {}^\omega\{ 0,2 \} \subseteq \baire \), which is trivially homeomorphic to \( \cantor \), and recall the spaces \( Y_\delta \) and \( Z_\delta \) from part~\ref{prop:sharpness-1}. If \( \gamma < \alpha \), then we first build%
\footnote{Here we really mean the union of the two spaces, which overlap on the point \( \inftycostbaire{0} \).}
\( \hat{\cantor}_\alpha = \hat{\cantor} \cup K_{\alpha+1} \), so that \( \rkcb{\hat{\cantor}_\alpha} = \alpha \) and \( \kerp{\hat{\cantor}_\alpha} = \hat{\cantor} \) is compact, and then set \( X = \hat{\cantor}_\alpha \oplus Y_\gamma \) or \( X = \hat{\cantor}_\alpha \oplus Z_\gamma \) depending on whether we want \( \gamma \in \sdordinal{X} \) or not in case \( \gamma \) is limit. If instead \( \gamma =  \alpha \), then we set \( X = \hat{\cantor} \oplus Y_\gamma \) or \( X = \hat{\cantor} \cup Z_\gamma \), again depending on the given requirement on selfduality of \(\gamma\) when it is limit.
(Notice that \( \rkcb{\hat{\cantor} \cup Z_\gamma}  = \gamma\) because \( \inftycostbaire{0} \) is no longer isolated in \( \derrkcb{\gamma}{\hat{\cantor} \cup Z_\gamma} = \hat{\cantor} \).)

\ref{prop:sharpness-3}
Set \( X = \baire \oplus \hat{\cantor}_\alpha \), where \( \hat{\cantor}_\alpha \) is as in part~\ref{prop:sharpness-2}.
\end{proof}

\section{Proof of \texorpdfstring{\cref{thm:maintheorem2}}{Main Theorem 2}} \label{sec:mainthm2}

The Wadge hierarchy \( \wadge{X} \) over a (zero-dimensional) Polish space provides the finest possible measurement of the topological complexity of its subsets. Slightly changing our perspective, it is natural to ask whether it is possible to classify zero-dimensional Polish spaces up to the measurement scale \( \wadge{X} \) they give rise to. More formally, consider the standard Effros-Borel space
\( F(\baire) \) of all closed subsets of \( \baire \), which up to homeomorphism contains all zero-dimensional Polish spaces, and for \( X,Y \in F(\baire) \) set
\[ 
X \approx_\w Y \iff \wadge{X} \cong \wadge{Y}.
 \] 
 
 \begin{remark}
 The current formulation of the problem is within our axiomatic setup \( \ZF + \DC(\mathbb{R}) + \AD \). If one wants to dispense with any determinacy assumption, the problem could be formulated by requiring that \( \wadge{X} \) and \( \wadge{Y} \) be isomorphic when restricted to Borel sets, or even just to sets in \( \boldsymbol{\Delta}^0_2 \): all results we are going to present would be unaffected by such a change,
 thus providing determinacy-free anti-classification results.
 \end{remark}
 
The relation \( \approx_\w \) is strictly coarser than closed biembeddability:  by \cref{lem:biembimpliessamewadgehierarchy}, if \( X \) and \( Y \) are such that there are closed embeddings from each space into the  other one, then \( X \approx_\w Y \); but on the other hand \( \baire \approx_\w \omega \times \cantor \) even though \( \baire \) is not homeomorphic to a closed subset of \( \omega \times \cantor \).

Our analysis provides a quite satisfactory solution to the classification problem associated with \( \approx_\w \) by providing a natural assignment of complete invariants.
More precisely,
call a pair of ordinals \( (\alpha,\beta) \) a \emph{Wadge invariant} if it satisfies the following conditions:
\begin{itemizenew}
\item
\( \alpha,\beta \in \omega_1 \cup \{ \Theta \} \) and \( \alpha \leq \beta \);
\item
if \( \beta \neq \Theta \), then \( \beta \) is at least \( 2 \) and is a successor ordinal;
\item
if \( \alpha \neq \Theta \), then either \( \alpha = 0 \), or \( \alpha = \lambda+1 \) for some countable limit ordinal \( \lambda \), or else \( \alpha = \lambda \) for some countable ordinal \( \lambda \) which is a limit of limit ordinals.
\end{itemizenew}
Notice that each Wadge invariant can easily be coded into a single countable ordinal.
For each zero-dimensional Polish space \( X \), the pair \( (\alpha_X, \Theta_X ) \) is a Wadge invariant which completely determines the structure of \( \wadge{X} \) up to order-isomorphism  (by the alternating property, \cref{uncountablecof_ordinal_intro}, and \cref{countablecof_ordinal_intro}). This means that the map \( X \mapsto (\alpha_X, \Theta_X) \) is an assignment of complete invariants
 for the classification of zero-dimensional Polish spaces up to \( \approx_\w \). 
 
  \begin{remark}
 As briefly discussed in Footnote~ \ref{canonicaliso} on page~\pageref{canonicaliso}, in the \( \AD \)-world we must make sure that when \( (\alpha_X,\Theta_X) = (\alpha_Y,\Theta_Y) \) then we can actually find an isomorphism between \( \wadge{X} \) and \( \wadge{Y} \) without appealing to the Axiom of Choice \( \AC \). But this is granted by the fact that by \cref{relativization}\ref{relativization-3}, \cref{cor:diffclassesinuncountable}, and \cref{lem:relativization}, if \( X \subseteq \baire \) is closed and \( \rho \colon \baire \to X \) is a retraction then the map \( \iota_X \) defined by \( \nsdclass \mapsto \rho^{-1}(\nsdclass) \) is an embedding of \( \langle \wc{X}, { \subseteq} \rangle \) into \( \langle \wc{\baire}, { \subseteq } \rangle \) sending \( \nsdc{X} \) into a suitable initial segment of \( \nsdc{\baire} \).
 By our analysis, it then turns out that \( \iota_Y^{-1} \circ \iota_X \) is the required canonical isomorphism between \( \langle \wc{X}, { \subseteq } \rangle \) and \( \langle \wc{Y}, { \subseteq } \rangle \).
 \end{remark}
 
Conversely, by \cref{prop:sharpness} together with the computation of \( \alpha_X \) provided after \cref{countablecof_ordinal_intro} and the computation of the length of \( \wadge{X} \) from (the proof of) \cref{length_countable_hierarchy_intro}, we get a zero-dimensional Polish space \( X \) with \( (\alpha_X, \Theta_X) = (\alpha,\beta) \) for every Wadge invariant \( (\alpha,\beta) \). In particular, the \( \approx_\w \)-equivalence classes can be well-ordered in order type \( \omega_1 \).

In contrast to these ``positive'' classification results, we are now going to show that, except for the case when \( X \) is a singleton, it is difficult to determine whether \( X \) belongs to a given \( \approx_\w \)-equivalence class: given a Wadge invariant \( (\alpha,\beta) \neq (0,2) \), the 
set of zero-dimensional Polish spaces \( X \) with \( (\alpha_X , \Theta_X) = (\alpha,\beta) \) is never Borel. This proves \cref{thm:maintheorem2}.

\begin{theorem}[\( \AD \)] \label{thm:complexityofeqclasses}
\begin{enumerate-(1)}
\item \label{thm:complexityofeqclasses-1}
The class \( [\baire]_{\approx_\w} \) corresponding to the Wadge invariant \( (\Theta,\Theta) \) is a Borel complete \( \boldsymbol{\Sigma}^1_1 \) set.
\item \label{thm:complexityofeqclasses-2}
For any \( X \in F(\baire) \) with \( |X| > 1 \) and \( X \not\approx_\w \baire \), the set \( [X]_{\approx_\w} \) is Borel \( \boldsymbol{\Pi}^1_1 \)-hard.
\item \label{thm:complexityofeqclasses-3}
For every
countable%
\footnote{For completeness, one might ask how complex is the set \( \{ X \in F(\baire) \mid \alpha \in \sdordinal{X} \} \) for a given \( \alpha \geq \omega_1 \) of countable cofinality. But this is precisely the set \( [\baire]_{\approx_\w} \) from~\cref{thm:complexityofeqclasses}\ref{thm:complexityofeqclasses-1}.} 
limit ordinal \(\alpha\), the set \( \{ X \in F(\baire) \mid \alpha \in \sdordinal{X} \} \) is Borel \( \boldsymbol{\Sigma}^1_1 \)-hard.
\end{enumerate-(1)}
\end{theorem}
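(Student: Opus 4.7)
The plan is to prove all three parts via explicit Borel reductions built around the continuous map \( T \mapsto [T] \) from trees on \( \omega \) to \( F(\baire) \), combined with the thickening gadget \( [T] \mapsto [T] \times \omega \times \cantor \) (encoded as a closed subset of \( \baire \) through a fixed homeomorphism \( \omega \times \baire \times \cantor \cong \baire \)). The key observation driving every reduction is that whenever \( [T] \neq \emptyset \) --- equivalently, \( T \) is ill-founded --- the set \( [T] \times \omega \times \cantor \) contains \( \{b\} \times \omega \times \cantor \cong \omega \times \cantor \) as a closed subspace for every \( b \in [T] \), so by \cref{prop:wadgeZ_eq_wadgeBaire_onedir} its perfect kernel is non-compact; if instead \( T \) is well-founded, \( [T] \times \omega \times \cantor = \emptyset \).

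For part (1), the \( \boldsymbol{\Sigma}^1_1 \) upper bound is immediate from \cref{cor:wadgeZ_eq_wadgeBaire_onedir} combined with \cref{prop:wadgeZ_eq_wadgeBaire_onedir}: \( Y \in [\baire]_{\approx_\w} \) iff \( Y \) contains a closed copy of \( \omega \times \cantor \), a standard \( \boldsymbol{\Sigma}^1_1 \) existential quantification in the Effros Borel space. For \( \boldsymbol{\Sigma}^1_1 \)-hardness I reduce from the set of ill-founded trees on \( \omega \) via \( T \mapsto Y_T := [T] \times \omega \times \cantor \): by the preceding paragraph, \( T \) is ill-founded iff \( \kerp{Y_T} \) is non-compact iff \( Y_T \approx_\w \baire \).

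For part (2), I fix a closed embedding \( X_0 \subseteq \baire \) homeomorphic to \( X \) placed in a clopen region of \( \baire \) disjoint from the one used for coding the gadget, and reduce from the well-founded trees via
\[
T \longmapsto Z_T := X_0 \sqcup \bigl( [T] \times \omega \times \cantor \bigr).
\]
If \( T \) is well-founded, then \( Z_T = X_0 \cong X \), hence \( Z_T \approx_\w X \). If \( T \) is ill-founded, then \( Z_T \) contains a closed copy of \( \omega \times \cantor \), so \( \kerp{Z_T} \) is non-compact, \( Z_T \approx_\w \baire \) by \cref{cor:wadgeZ_eq_wadgeBaire_onedir}, and therefore \( Z_T \not\approx_\w X \) by the standing assumption \( X \not\approx_\w \baire \). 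Thus \( T \) is well-founded iff \( Z_T \in [X]_{\approx_\w} \), proving \( \boldsymbol{\Pi}^1_1 \)-hardness.

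For part (3), given a countable limit ordinal \( \alpha \), I pick any nonempty zero-dimensional Polish space \( X_0 \) with \( \Theta_{X_0} \leq \alpha \) --- a singleton, for which \( \Theta = 2 \), suffices --- and apply the same gadget \( T \mapsto Z_T := X_0 \sqcup ([T] \times \omega \times \cantor) \), this time reducing from the ill-founded trees. For \( T \) well-founded, \( Z_T = X_0 \), so \( \alpha \geq \Theta_{Z_T} \) and hence \( \alpha \notin \sdordinal{Z_T} \) by definition. For \( T \) ill-founded, \( \kerp{Z_T} \) is non-compact, so by the implication (i) \( \Rightarrow \) (ii) of \cref{wadgeZ_eq_wadgeBaire_onedir} every countable limit ordinal (in particular \( \alpha \)) lies in \( \sdordinal{Z_T} \). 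The main subtlety across the three parts is precisely the choice of gadget: multiplying \( [T] \) by \( \omega \times \cantor \) rather than simply by \( \omega \) is what forces a non-compact perfect kernel even when \( [T] \) is a nonempty countable set, and this uniformity is what lets the same construction handle both Borel-completeness results simultaneously.
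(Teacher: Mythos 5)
Your reductions for all three parts are built on the map \( T \mapsto [T] \) from \( \mathrm{Tr} \) to \( F(\baire) \), which you describe as continuous and use as the Borel core of every gadget. This map is not Borel (let alone continuous) with respect to the Effros Borel structure: the basic condition \( [T] \cap \Nbhd_s \neq \emptyset \) says that \( T \) has an infinite branch extending \( s \), which is a \( \boldsymbol{\Sigma}^1_1 \)-complete condition on \( T \). Indeed, if \( T \mapsto [T] \) were Borel, then the set of ill-founded trees would equal the preimage of the Borel set \( \{ F \in F(\baire) \mid F \neq \emptyset \} \) and would be Borel, a contradiction. Consequently \( T \mapsto [T] \times \omega \times \cantor \) and \( T \mapsto Z_T \) are not Borel either, and none of your three reductions is a Borel reduction. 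The failure is not cosmetic: the feature you rely on throughout --- that the gadget is \emph{empty} exactly when \( T \) is well-founded --- can never be realized by a Borel map, since emptiness is a Borel condition in \( F(\baire) \) and such a map would again decide well-foundedness Borel-ly. (A sanity check that something must be wrong: applied verbatim with \( X \) a singleton, your part~\ref{thm:complexityofeqclasses-2} argument would show that the Borel set of singletons is \( \boldsymbol{\Pi}^1_1 \)-hard.)

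This is precisely the obstruction the paper's construction is designed to circumvent: it replaces \( [T] \) by the body of an explicitly \emph{pruned} tree obtained by padding the nodes of \( T \) and appending tails of \( 0 \)'s, so that \( F(T) \cap \Nbhd_s \neq \emptyset \) becomes a Borel condition on \( T \) and \( F(T) \) is always nonempty --- discrete and countable when \( T \) is well-founded, containing a closed copy of \( \baire \) when \( T \) is ill-founded. The price is the extra verification you hoped to avoid in part~\ref{thm:complexityofeqclasses-2}: one must check that attaching a countable discrete clopen piece to \( X \) (with \( |X|>1 \)) does not change \( \rkcb{\cdot} \), simplicity, or \( \alpha_X \), hence not the Wadge invariant. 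Your upper-bound argument in part~\ref{thm:complexityofeqclasses-1} is essentially fine (and the paper phrases it more directly via the existence of a perfect non-compact closed subset), but the three hardness claims all need to be rebuilt on a genuinely Borel tree-to-space construction.
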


As the proof will show, the same is true if we restrict the ambient space to the analytic space \( F_{> \aleph_0}(\baire) \) of \emph{uncountable} zero-dimensional Polish spaces, i.e.\ to spaces with Wadge invariant of the form \( (\alpha,\Theta) \). (Just replace the Borel map \( F \) with \( F_\cantor \) everywhere.)

\begin{proof}
We use the following variation of a construction from~\cite{CamMarMot2018ab}.
Fix a bijection \( \langle \cdot , \cdot \rangle \colon \omega^2 \to \omega \setminus \{ 0 \} \). For \( s \in {}^{<\omega}(\omega \setminus \{ 0 \}) \) let \( \pi(s) = t \) if and only if \( \lh(t) = \lh(s) \) and for all \( i < \lh(s) \) there is \( n_i \in \omega \) such that \( s(i) = \langle t(i), n_i \rangle \). Define the Borel map \( F \colon \mathrm{Tr} \to F(\baire) \) from the Polish space \( \mathrm{Tr} \) of trees on \(\omega\) into the space of (codes for) zero-dimensional Polish spaces by letting \( F(T) \) be the body of the pruned tree
\[ 
\{ s \conc (0)^k \mid s \in {}^{< \omega}(\omega \setminus \{ 0 \} ) \wedge \pi(s) \in T \wedge k \in \omega \}.
 \] 
 If \(  x \in \baire \) is an infinite branch through \( T \), then \( F(T) \) contains a closed set homeomorphic to \( \baire \), namely, the set of all \( y \in \baire \) such that \( y \restriction n \in {}^{< \omega}(\omega\setminus \{ 0 \}) \) and \( \pi(y \restriction n) = x \restriction n \) for all \( n \in \omega \). Hence \( F(T) \) is not \(\sigma\)-compact and \( \kerp{F(T)} \) is not compact. Conversely, if \( T \) is well-founded then all elements of \( F(T) \) are of the form \( s \conc (0)^\infty \) for some \( s \in {}^{< \omega}(\omega \setminus \{ 0 \}) \) and they are isolated in \( F(T) \), i.e.\ \( F(T)\) is discrete.
 
 To prove~\ref{thm:complexityofeqclasses-1}, first recall that by \cref{cor:wadgeZ_eq_wadgeBaire_onedir} we have \( X \approx_\w \baire \) if and only if \( \kerp{X} \) is not compact. But \( \kerp{X} \) is not compact if and only if there is \( H \in F(\baire) \) such that \( H \subseteq X \), \( H \) is perfect, and \( H \notin K(\baire) \), where \( K(\baire) \) is the collection of all compact subspaces of \( \baire \). Since \( K(\baire) \) is Borel in \( F (\baire) \) and ``\( H \) is perfect'' is a Borel statement (see the proof of~\cite[Theorem 27.5]{kechris}), this is a \( \boldsymbol{\Sigma}^1_1 \) definition of \( [\baire]_{\approx_\w} \). For the hardness part, it is enough to use the map \( F \) from the previous paragraph, recalling that the set of ill-founded trees is a complete \( \boldsymbol{\Sigma}^1_1 \) set.
 
Consider now a space \( X \) as in~\ref{thm:complexityofeqclasses-2}, and let \( F_X \colon \mathrm{Tr} \to F(\baire) \) be the Borel map defined by \( F_X(T) = F(T) \oplus X \), where \( F \) is as in the first paragraph. If \( T \) is ill-founded then \( F_X(T) \approx_\w \baire \) because \( \kerp{F_X(T)} \supseteq \kerp{F(T)} \)  is not compact. If instead \( T \) is well-founded, then \( F(T) \) is a discrete clopen subset of \( F_X(T) \). Then \( \rkcb{F_X(T)} = \rkcb{X} \), and (when this makes sense) \( F_X(T) \) is simple if and only if so is \( X \) because we assumed \( |X| > 1 \). Finally, \( \rkcomp{F_X(T)} = \max \{ 1, \rkcomp{X} \} \), hence \( \alpha_{F_X(T)} = \alpha_X \). It follows that \( F_X(T) \approx_\w X \). This shows that \( F_X \) reduces the set of well-founded trees to \( [X]_{\approx_\w} \), hence the latter is Borel \( \boldsymbol{\Pi}^1_1 \)-hard.

Part~\ref{thm:complexityofeqclasses-3} follows from part~\ref{thm:complexityofeqclasses-2}; for example, one can use the map \( F_{\cantor} \). 
\end{proof}

\begin{remark} \label{rmk:complexityofeqclasses}
\cref{thm:complexityofeqclasses} admits several variations. For example, for all relevant \( \alpha \) the set \( \{ X \in F(\baire) \mid \alpha_X = \alpha \} \) is either Borel \( \boldsymbol{\Sigma}^1_1 \)-hard (if \( \alpha = \Theta \)) or Borel \( \boldsymbol{\Pi}^1_1 \)-hard (if \( \alpha \) is countable). Similarly, for all relevant \(\beta\) the set \( \{ X \in F(\baire) \mid \Theta_X = \beta \} \) is either Borel \( \boldsymbol{\Sigma}^1_1 \)-hard (if \( \beta = \Theta \)) or Borel \( \boldsymbol{\Pi}^1_1 \)-hard (if \( \beta < \omega_1 \)).
\end{remark}

\cref{thm:complexityofeqclasses} implies that
the equivalence relation \( \approx_\w \) is neither analytic nor co-analytic. Therefore, although we obtained a quite simple and satisfactory classification in terms of Wadge invariants (which are, essentially, countable ordinals), there is no reasonable classification for  \( \approx_\w \) in terms of, say, Borel reducibility.  In a sense, this hints at the fact that there are natural situations in which the latter is not an accurate tool to determine the complexity of the classification problem at hand.

The situation radically changes if we instead consider only \emph{perfect} zero-dimensional Polish spaces.

\begin{proposition}[\( \AD \)] \label{prop:complexityofeqclasses}
Let \( F_{\mathrm{p}}(\baire) \) be the standard Borel space of perfect zero-dimensional Polish spaces, and let \( X \in F_{\mathrm{p}}(\baire) \). Then either \( X \approx_\w \baire \) or \( X \approx_\w \cantor \), and those two \( \approx_\w \)-equivalence classes are Borel.
\end{proposition}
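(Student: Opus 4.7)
The plan is to reduce the entire classification to the single topological dichotomy of compactness, observing that the proposition is really a clean corollary of machinery already established in the paper. The key point is that for any (nonempty) perfect zero-dimensional Polish space $X$ one has $\kerp{X} = X$, so the Baire-side of \cref{cor:wadgeZ_eq_wadgeBaire_onedir} simplifies to: $X \approx_\w \baire$ if and only if $X$ is not compact. Conversely, if $X$ is perfect, nonempty, zero-dimensional Polish and compact, then it is uncountable (any perfect nonempty Polish space is), so \cref{prop:isomorphicWadgethroughretractions}\ref{prop:isomorphicWadgethroughretractions-2} gives $\wadge{X} \cong \wadge{\cantor}$, i.e.\ $X \approx_\w \cantor$. (Alternatively, Brouwer's characterization of $\cantor$ yields $X \cong \cantor$ outright.) Since $\kerp{\baire} = \baire$ is not compact while $\kerp{\cantor} = \cantor$ is, \cref{cor:wadgeZ_eq_wadgeBaire_onedir} also ensures $\baire \not\approx_\w \cantor$, so these are genuinely two distinct $\approx_\w$-classes and they exhaust $F_{\mathrm{p}}(\baire)$.

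For the Borel complexity part I will invoke the classical fact (already used in the proof of \cref{thm:complexityofeqclasses}) that the set $K(\baire)$ of compact elements of $F(\baire)$ is Borel in the Effros-Borel space $F(\baire)$. Intersecting $K(\baire)$ with $F_{\mathrm{p}}(\baire)$ yields a Borel subset of $F_{\mathrm{p}}(\baire)$ which, by the dichotomy above, coincides with $[\cantor]_{\approx_\w} \cap F_{\mathrm{p}}(\baire)$. Its complement in $F_{\mathrm{p}}(\baire)$ is then $[\baire]_{\approx_\w} \cap F_{\mathrm{p}}(\baire)$ and is therefore also Borel.

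There is essentially no obstacle to surmount here: both halves of the statement are immediate from previously established results, the only mild care needed being to verify that within the perfect regime the condition ``$\kerp{X}$ is compact'' from \cref{cor:wadgeZ_eq_wadgeBaire_onedir} collapses to ``$X$ is compact''. The proposition is best read as the natural counterpoint to \cref{thm:complexityofeqclasses}: it pins down isolated points (and, equivalently, the freedom to tune $\rkcb{X}$, $\rkcomp{X}$, and the CB-type via the constructions of \cref{prop:sharpness}) as the sole source of the wild descriptive-set-theoretic behaviour of $\approx_\w$, since removing them reduces the classification to a single Borel bit of information.
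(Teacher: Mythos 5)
Your proof is correct and follows essentially the same route as the paper's: for perfect $X$ one has $\kerp{X}=X$, so \cref{cor:wadgeZ_eq_wadgeBaire_onedir} handles the non-compact case, \cref{prop:isomorphicWadgethroughretractions}\ref{prop:isomorphicWadgethroughretractions-2} (via uncountability of nonempty perfect Polish spaces) handles the compact case, and Borelness follows from $K(\baire)$ being Borel in $F(\baire)$. The extra remarks (Brouwer's theorem, distinctness of the two classes) are correct but not needed beyond what the paper already records.
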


\begin{proof}
All spaces \( X \in F_{\mathrm{p}}(\baire) \) are uncountable, and thus \( \Theta_X = \Theta \) by \cref{prop:decompositionofslefdual}. Moreover, we have \( \kerp{X} = X \), thus \( X \approx_\w \baire \) if \( X \) is not compact (\cref{cor:wadgeZ_eq_wadgeBaire_onedir}), while \( X \approx_\w \cantor \) if \( X \) is compact (\cref{prop:isomorphicWadgethroughretractions}\ref{prop:isomorphicWadgethroughretractions-2}). Finally, the distinction between the two cases is Borel because \( K(\baire) \)  is Borel in \( F(\baire) \).
\end{proof}

For the record, we also notice that the
proof of \cref{thm:complexityofeqclasses}\ref{thm:complexityofeqclasses-1} actually gives a seemingly unrelated complexity result that might be of independent interest.

\begin{proposition}
The set \( \{ X \in F(\baire) \mid \kerp{X} \text{ \emph{is compact}} \} \) is a Borel complete \( \boldsymbol{\Pi}^1_1 \) set.
\end{proposition}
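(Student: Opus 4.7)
The plan is to extract both the upper bound and the hardness directly from ingredients already used in the proof of \cref{thm:complexityofeqclasses}\ref{thm:complexityofeqclasses-1}, which we essentially re-interpret.

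First, for the \( \boldsymbol{\Pi}^1_1 \) upper bound, I would observe that \( \kerp{X} \) is compact if and only if every closed perfect subset \( H \subseteq X \) is compact (since \( \kerp{X} \) is itself the largest perfect closed subset of \( X \), any non-compact closed perfect \( H \subseteq X \) would be contained in \( \kerp{X} \), and conversely if \( \kerp{X} \) is not compact it is itself a witness). This gives the description
\[
\{ X \in F(\baire) \mid \kerp{X} \text{ is compact} \} = \{ X \in F(\baire) \mid \forall H \in F(\baire)\, ((H \subseteq X \wedge H \text{ perfect}) \Rightarrow H \in K(\baire)) \}.
\]
Since \( K(\baire) \) is Borel in \( F(\baire) \), the relation \( H \subseteq X \) is Borel, and being perfect is a Borel property of \( H \in F(\baire) \) (as noted in the proof of \cref{thm:complexityofeqclasses}, referring to~\cite[Theorem 27.5]{kechris}), the set is \( \boldsymbol{\Pi}^1_1 \).

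For \( \boldsymbol{\Pi}^1_1 \)-hardness, I would reuse the Borel map \( F \colon \mathrm{Tr} \to F(\baire) \) constructed in the proof of \cref{thm:complexityofeqclasses}\ref{thm:complexityofeqclasses-1}. Recall that whenever \( T \) is ill-founded, \( F(T) \) contains a closed copy of \( \baire \), so \( \kerp{F(T)} \) is not compact; whereas when \( T \) is well-founded, \( F(T) \) is discrete, hence \( \kerp{F(T)} = \emptyset \), which is (trivially) compact. Therefore \( F \) is a Borel reduction from the set \( \mathrm{WF} \) of well-founded trees on \(\omega\) to \( \{ X \in F(\baire) \mid \kerp{X} \text{ is compact} \} \). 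Since \( \mathrm{WF} \) is Borel \( \boldsymbol{\Pi}^1_1 \)-complete, the desired hardness follows, and together with the upper bound this shows the set is Borel complete \( \boldsymbol{\Pi}^1_1 \).

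There is really no main obstacle here: both halves are immediate from the analysis already carried out for \cref{thm:complexityofeqclasses}, and the only thing to notice is that the Borel map \( F \) serves simultaneously as a reduction for both the \( \boldsymbol{\Sigma}^1_1 \)-hardness of non-compactness of the kernel (equivalently, of \( [\baire]_{\approx_\w} \)) and the \( \boldsymbol{\Pi}^1_1 \)-hardness of its negation, simply by swapping the role of well-founded versus ill-founded trees.
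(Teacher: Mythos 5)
Your proposal is correct and matches the paper's intended argument: the paper derives this proposition directly from the proof of \cref{thm:complexityofeqclasses}\ref{thm:complexityofeqclasses-1}, namely the \( \boldsymbol{\Sigma}^1_1 \) description of non-compactness of the kernel (whose complement gives your \( \boldsymbol{\Pi}^1_1 \) upper bound) together with the same Borel map \( F \), which reduces well-founded trees to spaces with compact perfect kernel. Your write-up simply makes explicit what the paper leaves as a remark.
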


\section{Some questions}\label{conclusion}

Our analysis raises a number of natural questions: we just mention a few of them which follow closely our approach.

With the exception of \( [\baire]_{\approx_\w} \), \cref{thm:complexityofeqclasses} only gives lower bounds for the complexity of the other \( \approx_\w \)-equivalence classes, but they might not be sharp.

\begin{question}
Let \( X \) be a zero-dimensional Polish space with at least two points and \( \kerp{X} \) not compact. What is the exact complexity of \( [X]_{\approx_\w} \)? What for \( X = \cantor \)? What is the complexity of the equivalence relation \( \approx_\w \) as a subset of the square \( F(\baire) \times F(\baire) \)?
\end{question}

Similar questions can be raised for Polish spaces of positive dimension, or for the natural adaptation of \( \approx_\w \) to the hyperspace \( F({}^\omega \mathbb{R}) \) of all Polish spaces.

Our results are limited to zero-dimensional Polish spaces, and it is natural to wonder what happens for other kinds of Polish spaces. As recalled in the introduction, this line of research has been pursued in many papers and by various authors, although we are still lacking a complete description of the various possibilities. But from a different perspective, the restriction to zero-dimensional Polish spaces is equivalent to considering only closed (or even just \( G_\delta \)) subspaces of \( \baire \).
One can then wonder what happens for arbitrary subspaces of \( \baire \), starting with the Borel ones. This is a quite different problem, as we retain zero-dimensionality (together with separability and metrizability) but drop the completeness of the space. We leave this research direction open for future investigations.


\end{document}